\documentclass[11pt]{amsart}

\newlength{\myhmargin} \setlength{\myhmargin}{1in} \addtolength{\myhmargin}{18pt}
\usepackage[textheight=574pt, textwidth=445pt, marginparwidth=50pt, centering]{geometry}

 \setlength{\parskip}{3pt}
\usepackage{amsmath,amssymb,amsthm,amsfonts,amscd,flafter,
graphicx,verbatim,pinlabel,mathrsfs}
\usepackage[all]{xy}
\usepackage{epstopdf}
\epstopdfsetup{suffix=}
\usepackage[colorlinks=false]{hyperref}
\usepackage[all]{hypcap}
\AtBeginDocument{\addtocontents{toc}{\protect\setlength{\parskip}{0pt}}}

\title{Instanton Floer homology and contact structures}

\author[John A. Baldwin]{John A. Baldwin}
\address{Department of Mathematics \\ Boston College}
\email{john.baldwin@bc.edu}

\author[Steven Sivek]{Steven Sivek}
\address{Department of Mathematics \\ Princeton University}
\email{ssivek@math.princeton.edu}

\thanks{JAB was partially supported by NSF grant DMS-1104688.  SS was partially supported by NSF postdoctoral fellowship DMS-1204387.}

\def\C{{\mathbb{C}}}

\newcommand\hf{\widehat{HF}}

\newcommand\ssm{\smallsetminus}

\newcommand\Psit{\underline{\Psi}}
\newcommand\Z{\mathbb{Z}}

\newcommand\inr{{\rm int}}

\newcommand\data{\mathscr{D}}

\newcommand\SHM{SHM}
\newcommand\SHMt{\underline{\SHM}}

\newcommand\SHI{SHI}
\newcommand\SHIt{\underline{\SHI}}
\newcommand\KHI{KHI}
\newcommand\KHIt{\underline{\KHI}}
\newcommand{\definefunctor}[1]{\textbf{\textup{#1}}}

\newcommand\SHMtfun{\definefunctor{\SHMt}}

\newcommand\SHItfun{\definefunctor{\SHIt}}

\newcommand\Img{{\rm Im}}


\newcommand\PSys{\textbf{\textup{PSys}}}
\newcommand{\RPSys}[1][\RR]{{#1}\mbox{-}\PSys}
\newcommand\DiffSut{\textup{\textbf{DiffSut}}}
\newcommand\CobSut{\textup{\textbf{CobSut}}}

\newcommand{\longcomment}[2]{#2}

\DeclareFontFamily{U}{mathx}{\hyphenchar\font45}
\DeclareFontShape{U}{mathx}{m}{n}{
      <5> <6> <7> <8> <9> <10>
      <10.95> <12> <14.4> <17.28> <20.74> <24.88>
      mathx10
      }{}
\DeclareSymbolFont{mathx}{U}{mathx}{m}{n}
\DeclareFontSubstitution{U}{mathx}{m}{n}
\DeclareMathAccent{\widecheck}{0}{mathx}{"71}
\newcommand{\HMto}{\widecheck{\mathit{HM}}}

\longcomment{
    \RequirePackage{rotating}                   
    \def\HMto{%
       \setbox0=\hbox{$\widehat{\mathit{HM}}$}
       \setbox1=\hbox{$\mathit{HM}$}
       \dimen0=1.1\ht0
       \advance\dimen0 by 1.17\ht1
       \smash{\mskip2mu\raise\dimen0\rlap{%
          \begin{turn}{180}
              {$\widehat{\phantom{\mathit{HM}}}$}
           \end{turn}} \mskip-2mu    
                \mathit{HM}
    }{\vphantom{\widehat{\mathit{HM}}}}{}}
}
    
    \newcommand*\oline[1]{%
  \vbox{%
    \hrule height 0.35pt
    \kern0.1ex
    \hbox{%
      \kern-0.0em
      \ifmmode#1\else\ensuremath{#1}\fi
      \kern-0.1em
    }
  }
}

\newtheorem{theorem}{Theorem}[section]
\newtheorem{lemma}[theorem]{Lemma}

\newtheorem{conjecture}[theorem]{Conjecture}
\newtheorem{corollary}[theorem]{Corollary}
\newtheorem{proposition}[theorem]{Proposition}

\theoremstyle{definition}
\newtheorem{definition}[theorem]{Definition}

\newtheorem{remark}[theorem]{Remark}

\makeatletter
\newtheorem*{rep@thm}{\rep@title}
\newcommand{\newreptheorem}[2]{%
\newenvironment{rep#1}[1][0,0]{%
\def\rep@title{#2##1}%
\begin{rep@thm}}%
{\end{rep@thm}}}
\makeatother
\newreptheorem{theorem}{}

\begin{document}
\begin{abstract} 
We define an invariant of contact 3-manifolds with  convex boundary using Kronheimer and Mrowka's sutured instanton Floer  homology  theory.  To the best of our knowledge, this is the first invariant of contact manifolds---with or without boundary---defined in the instanton Floer setting. We prove that our invariant vanishes for overtwisted contact structures and is nonzero for contact manifolds with boundary which embed into   Stein fillable contact manifolds.  Moreover, we propose a strategy by which  our   contact invariant might be used to relate the fundamental group of a closed contact 3-manifold to properties of its Stein fillings. Our construction is  inspired by a reformulation of a similar invariant in the monopole Floer setting defined by the authors in \cite{bsSHM}.
 \end{abstract}

\maketitle

\section{Introduction}
\label{sec:intro}
Floer-theoretic invariants of contact  manifolds have been responsible for many important results in low-dimensional topology. Notable examples include the invariants of closed contact 3-manifolds defined by Kronheimer and Mrowka \cite{km} and  by Ozsv{\'a}th and Szab{\'o} \cite{osz1} in monopole and Heegaard Floer homology, respectively. Also important is the work in \cite{hkm4}, where Honda, Kazez, and Mati{\'c} extend Ozsv{\'a}th and Szab{\'o}'s construction, using sutured Heegaard Floer homology to define an invariant of \emph{sutured contact manifolds}, which are triples of the form $(M,\Gamma,\xi)$ where $(M,\xi)$ is a contact 3-manifold with convex boundary and  $\Gamma\subset \partial M$ is a multicurve dividing the characteristic foliation of $\xi$ on $\partial M$. Recently, we defined an analogous invariant of sutured contact manifolds in Kronheimer and Mrowka's sutured monopole Floer homology theory \cite{bsSHM}.

The goal of this paper is to define an invariant of sutured contact manifolds in Kronheimer and Mrowka's sutured instanton Floer homology ($SHI$). To the best of our knowledge, this is the first invariant of contact manifolds---with or without boundary---defined in the instanton Floer setting. Like the Heegaard Floer invariants but in contrast with the monopole invariants, our instanton Floer contact invariant is defined using the full relative Giroux correspondence. Its construction is inspired by a reformulation of the monopole Floer invariant in \cite{bsSHM} which was used there to prove that the monopole invariant is well-defined.

A unique feature of the instanton Floer viewpoint is the central role played by  the fundamental group. Along these lines, we conjecture a means by which our  contact invariant in $SHI$ might be used to relate the fundamental group of a closed contact 3-manifold to properties of its Stein fillings, a relationship which has been largely unexplored to this point.

Below, we sketch the construction of our contact invariant, describe some of its most important properties, state some conjectures, and discuss plans for future work which include using the constructions in this paper to define invariants of bordered manifolds in the instanton Floer setting.

\subsection{A contact invariant in $\SHI$} 
\label{ssec:introshi} 
Suppose $(M,\Gamma)$ is a balanced sutured manifold. Roughly speaking, a \emph{closure} of $(M,\Gamma)$ is formed by gluing on some auxiliary piece and ``closing up" by  identifying the remaining boundary components. In \cite{km4}, Kronheimer and Mrowka defined an invariant of balanced sutured manifolds in terms of the instanton Floer homology groups of these related closed 3-manifolds. They proved that the groups associated to different closures of a given sutured manifold are all isomorphic. In this way, their invariant assigns to $(M,\Gamma)$ an isomorphism class of $\C$-modules, denoted by $\SHIt(M,\Gamma)$. 

In \cite{bs3}, we introduced a refinement of their construction which assigns to $(M,\Gamma)$ a \emph{projectively transitive system of $\C$-modules}, denoted by $\SHItfun(M,\Gamma)$. This system records the collection of $\C$-modules---all isomorphic to $\SHIt(M,\Gamma)$---associated to different closures of $(M,\Gamma)$ together with \emph{canonical} isomorphisms relating these modules, where these isomorphisms are  well-defined up to multiplication in $\C^\times$. We refer to this system as the \emph{sutured instanton homology} of $(M,\Gamma)$.

A key step in constructing our contact invariant is to first define maps on sutured instanton homology associated to contact handle attachments. That is, suppose $(M_i,\Gamma_i)$ is a balanced sutured manifold obtained by attaching a contact $i$-handle to $(M,\Gamma)$ for some $i\in\{0,1,2,3\}$. We define a map \[\mathscr{H}_i:\SHItfun(-M,-\Gamma)\to\SHItfun(-M_i,-\Gamma_i)\] which depends only on the smooth data involved in this handle attachment. Our construction of these maps is almost identical to that of the analogous maps in  sutured monopole  homology \cite{bsSHM}; in particular, these maps are defined in terms of the maps on instanton Floer homology induced by natural cobordisms between closures.

Suppose now that $(M,\Gamma,\xi)$ is a sutured contact manifold. According to the relative Giroux correspondence, this contact manifold  admits a  partial open book decomposition. This implies that $(M,\Gamma,\xi)$ can be obtained by attaching contact 2-handles to a sutured contact manifold $H(S)$ formed from rounding the corners of a  tight, vertically invariant contact structure on $S\times I$, where $S$ is a compact surface with boundary (the surface $S$ and the contact $2$-handle attachments are specified by the partial open book decomposition). 
Let \[\mathscr{H}:\SHItfun(-H(S))\to\SHItfun(-M,-\Gamma)\] be the composition of the maps associated to the contact $2$-handle attachments above. Since $H(S)$ is a product sutured manifold, its sutured instanton homology has rank one with generator $\mathbf{1}\in\SHItfun(-H(S))\cong \C,$ and we define the contact invariant of $(M,\Gamma,\xi)$ to be  \[\theta(M,\Gamma,\xi):=\mathscr{H}(\mathbf{1})\in\SHItfun(-M,-\Gamma).\] Our main theorem (stated later as Theorem \ref{thm:well-defined4}) is the following.

\begin{theorem}
The element $\theta(M,\Gamma,\xi)$ is well-defined.
\end{theorem}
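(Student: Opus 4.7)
The plan is to identify the choices that enter the construction of $\theta(M,\Gamma,\xi)$ and establish invariance under each. These choices are: (i) the partial open book decomposition $(S,P,h)$ supporting $(M,\Gamma,\xi)$, which determines both the surface $S$ and the collection of Legendrian attaching curves for the contact $2$-handles; (ii) the order in which these $2$-handles are attached to $H(S)$ to recover $(M,\Gamma,\xi)$; and (iii) the choice of scalar generator $\mathbf{1}$ of the rank-one module $\SHItfun(-H(S))$. Choice (iii) is automatic in the projectively transitive systems formalism of \cite{bs3}: the system carries the one-dimensional line generated by $\mathbf{1}$ only up to the action of $\C^\times$, which is precisely the ambiguity in selecting a generator, so $\mathscr{H}(\mathbf{1})$ is a well-defined element of the system $\SHItfun(-M,-\Gamma)$.

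For (ii), I would prove that the contact $2$-handle maps $\mathscr{H}_2$ associated to $2$-handles attached along \emph{disjoint} Legendrian curves commute. This follows from the cobordism-level definition of $\mathscr{H}_2$: attaching the two handles in either order yields diffeomorphic cobordisms between the same pair of closures, hence induces the same map on instanton Floer homology. Any two orderings of the full $2$-handle collection for a partial open book differ by a sequence of such disjoint transpositions, so the composition $\mathscr{H}$ is independent of ordering.

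The core of the argument is (i). Here I would invoke the relative Giroux correspondence, which asserts that any two partial open book decompositions of $(M,\Gamma,\xi)$ are related by a finite sequence of isotopies and positive stabilizations. Isotopy invariance is immediate from the fact that $\mathscr{H}_i$ depends only on smooth data. A positive stabilization replaces $(S,P,h)$ by $(S',P',h')$ where $S'=S\cup(\textrm{1-handle})$ and the new partial open book specifies one extra basis arc $a_{n+1}$ whose contact $2$-handle smoothly cancels the attached $1$-handle. Using the ordering independence from (ii), I would rearrange $\mathscr{H}'$ so that the map $\mathscr{H}_{2,a_{n+1}}$ is applied first; since $H(S')$ is diffeomorphic to $H(S)$ with a contact $1$-handle attached, and this $1$-handle is smoothly cancelled by the $a_{n+1}$ $2$-handle, the composite $\mathscr{H}_{2,a_{n+1}}\colon \SHItfun(-H(S'))\to\SHItfun(-H(S))$ goes between rank-one modules, and invariance of $\theta$ reduces to showing it sends $\mathbf{1}'$ to $\mathbf{1}$.

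The main obstacle is verifying this last statement. The approach is to choose closures of $-H(S)$ and $-H(S')$ compatibly so that the cobordism defining $\mathscr{H}_{2,a_{n+1}}$ is, after the smooth $1$-$2$ handle cancellation, diffeomorphic rel boundary to a product cobordism on a closure of $-H(S)$; such cobordisms induce the identity on instanton Floer homology up to the $\C^\times$-ambiguity, and this suffices in the projectively transitive setting. The challenge is to set up these compatible closures carefully, and to track the scalar ambiguities introduced by the system formalism so that the cancellation actually takes $\mathbf{1}'$ to $\mathbf{1}$ rather than to a nontrivial scalar multiple. This argument closely parallels the one carried out in the monopole setting in \cite{bsSHM}, and I expect the adaptation to require only cosmetic modifications.
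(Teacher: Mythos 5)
Your overall skeleton is the same as the paper's: reduce via the relative Giroux correspondence to invariance under positive stabilization, rearrange so the new $2$-handle comes first, and reduce to showing the map $\mathscr{H}^{c_0}\colon \SHItfun(-H(S'))\to\SHItfun(-M(S',P',h',c_0))\cong\SHItfun(-H(S))$ takes generator to generator. One remark before the main point: the scalar-tracking worry you raise at the end is a non-issue. Since the target is a rank-one system and elements are only defined up to $\C^\times$, sending $\mathbf{1}'$ to $\mathbf{1}$ is \emph{equivalent} to $\mathscr{H}^{c_0}$ being nonzero; no bookkeeping of units is needed.

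The genuine gap is in your mechanism for proving this nonvanishing. You argue that after the $1$--$2$ handle cancellation the cobordism defining $\mathscr{H}^{c_0}$ is diffeomorphic rel boundary to a product, and that product cobordisms induce the identity. But the cancellation you invoke is internal to the $3$-manifold: it identifies the sutured manifold $M(S',P',h',c_0)$ with $H(S)$ (a $3$-dimensional contact $1$-handle cancelling a contact $2$-handle), whereas $\mathscr{H}^{c_0}$ is induced by an honest $4$-dimensional $2$-handle cobordism $W$ from a closure $Y$ of $H(S')$ to the surgered manifold $Y'$, which is a closure of a \emph{different} sutured manifold. No choice of compatible closures makes $W$ a product: $H_2(W,Y)\cong\Z$ is generated by the core of the handle, so $W$ is never rel-boundary diffeomorphic to $Y\times[0,1]$. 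Moreover, some special feature of the stabilizing curve must enter, since generic contact $2$-handle maps are very far from injective (they kill the invariant in the overtwisted case, Theorem \ref{thm:otinstanton}). The feature the paper uses is this: the attaching curve $\gamma_0$ is isotopic in $\partial H(S')$ to the curve $\beta'$ corresponding to $\beta\times\{1\}$, which lies entirely in the positive region $R_+$, and the isotopy carries the $2$-handle framing to the surface framing $+1$. Hence in the closure the surgery curve may be pushed into the $r(R\times[-1,1])$ region as $r(b\times\{t\})$ with $+1$ surface framing, and the resulting $2$-handle cobordism is precisely one of the cobordisms used in \cite[Section 9]{bs3} to define the canonical isomorphisms between different marked odd closures of the same genus. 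It therefore induces an isomorphism of systems, so $\mathscr{H}^{c_0}$ is nonzero and the stabilization invariance follows; without this identification (or some substitute, such as an exact-triangle or nonvanishing argument), your cancellation step does not go through.
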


That is to say, this element does  not depend on the chosen partial open book decomposition  (by the Giroux correspondence, it suffices to prove that this element is preserved under positive stabilization of the open book).\footnote{The analogous contact invariant in sutured monopole  homology is defined without reference to the relative Giroux correspondence, but admits the same formulation as above.} 

We also show that this contact invariant behaves naturally with respect to the contact handle attachment maps, per the following (stated later as Theorem \ref{thm:handletheta2}).

\begin{theorem}
\label{thm:handletheta} Suppose  $(M_i,\Gamma_i,\xi_i)$  is obtained from $(M,\Gamma,\xi)$ by attaching a contact $i$-handle and $\mathscr{H}_i$ is the associated map for $i=0,$ $1,$ or $2$. Then $\mathscr{H}_i(\theta(M,\Gamma,\xi))=\theta(M_i,\Gamma_i,\xi_i).$\footnote{We believe the contact $3$-handle maps also preserve the contact invariant but do not prove this here.}
\end{theorem}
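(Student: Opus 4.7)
The plan is to prove the theorem by handling each index $i\in\{0,1,2\}$ separately, in each case producing a partial open book decomposition for $(M_i,\Gamma_i,\xi_i)$ that extends the given partial open book of $(M,\Gamma,\xi)$ in a manner compatible with the contact $i$-handle attachment. By the well-definedness of $\theta$, it suffices to compute $\theta(M_i,\Gamma_i,\xi_i)$ using such an extended decomposition.

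The case $i=2$ should be essentially built into the definition. Given a partial open book for $(M,\Gamma,\xi)$ with page $S$ and $2$-handle composition $\mathscr{H}$, I would simply append the new contact $2$-handle to the list of attachments. This yields a partial open book of $(M_2,\Gamma_2,\xi_2)$ whose $2$-handle composition is $\mathscr{H}_2\circ\mathscr{H}$, so
\[
\theta(M_2,\Gamma_2,\xi_2) = (\mathscr{H}_2\circ\mathscr{H})(\mathbf{1}) = \mathscr{H}_2(\theta(M,\Gamma,\xi))
\]
directly from the definition of $\theta$.

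For $i=0$ and $i=1$, the strategy is to modify the page rather than to add a $2$-handle. For $i=0$, let $S'=S\sqcup D^2$, extend the monodromy by the identity on the disk, and keep the same $2$-handle attaching data. For $i=1$, build $S'$ from $S$ by attaching a $2$-dimensional $1$-handle at the location dictated by the contact $1$-handle. In both cases, I would verify that $H(S')$ is diffeomorphic, as a sutured contact manifold, to the result of attaching the contact $i$-handle to $H(S)$, and that the same $2$-handle attaching data then realizes $(M_i,\Gamma_i,\xi_i)$ from $H(S')$. If $\mathbf{1}'\in\SHItfun(-H(S'))$ denotes the generator and $\mathscr{H}'$ the $2$-handle composition over the extended partial open book, then $\theta(M_i,\Gamma_i,\xi_i)=\mathscr{H}'(\mathbf{1}')$. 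Two claims now complete the argument: first, the contact $i$-handle map $\mathscr{H}_i:\SHItfun(-H(S))\to\SHItfun(-H(S'))$ between product modules sends the generator to the generator, i.e.\ $\mathscr{H}_i(\mathbf{1})=\mathbf{1}'$; second, attaching the contact $i$-handle and the contact $2$-handles in either order produces the same cobordism, so the induced maps commute: $\mathscr{H}'\circ\mathscr{H}_i=\mathscr{H}_i\circ\mathscr{H}$. Granting both, one has
\[
\theta(M_i,\Gamma_i,\xi_i) = \mathscr{H}'(\mathbf{1}') = \mathscr{H}'(\mathscr{H}_i(\mathbf{1})) = \mathscr{H}_i(\mathscr{H}(\mathbf{1})) = \mathscr{H}_i(\theta(M,\Gamma,\xi)).
\]

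The main obstacle will be the commutation claim. I expect to prove it by arguing that the cobordisms between closures defining $\mathscr{H}'\circ\mathscr{H}_i$ and $\mathscr{H}_i\circ\mathscr{H}$ are diffeomorphic rel boundary: the contact $i$-handle and the contact $2$-handles are attached along disjoint regions of the boundary of the closure, so the order of attachment is immaterial, and the induced maps on instanton Floer homology therefore agree. The first claim, that $\mathscr{H}_i$ preserves the generator between rank-one modules, should follow from a local computation showing that the relevant closures of $H(S)$ and $H(S')$ are related by a product-like cobordism whose induced map is nonzero; this can be checked by excision or by direct argument, in analogy with the sutured monopole Floer setting developed in \cite{bsSHM}.
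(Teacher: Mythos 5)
Your skeleton for $i=0$ and $i=1$ is essentially the paper's: enlarge the page by a disjoint disk (resp.\ a $2$-dimensional $1$-handle), keep the same $2$-handle data, and conclude via the two claims that the handle map between the product pieces preserves the generator and that it commutes with the composition of contact $2$-handle maps. Two remarks there. First, the generator claim needs no excision or nonvanishing argument: by construction the contact $0$- and $1$-handle maps are identity maps on a suitably chosen common closure (Definitions \ref{def:0handle} and \ref{def:1handle}), hence isomorphisms, so they automatically carry $\mathbf{1}$ to $\mathbf{1}'$; the commutation claim is likewise the easy part, exactly as in the paper ("the identity and $2$-handle cobordisms on closures commute"). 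Second, for $i=1$ the phrase "at the location dictated by the contact $1$-handle" conceals the real work: an arbitrary partial open book decomposition of $(M,\Gamma,\xi)$ is not adapted to the attaching disks $D_\pm$, and one must first produce the decomposition from a contact cell decomposition whose Legendrian graph contains the core of the contact $1$-handle, which is how the paper obtains the compatible pair $(S,P,h,\mathbf{c},f)$ and $(S',P,h,\mathbf{c},f')$. You gesture at "verifying" this but give no mechanism.

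The genuine gap is the case $i=2$. You assert that one can "simply append the new contact $2$-handle to the list of attachments" and thereby obtain a partial open book decomposition of $(M_2,\Gamma_2,\xi_2)$ with $2$-handle composition $\mathscr{H}_2\circ\mathscr{H}$. But the $2$-handles of a partial open book are attached along the very special curves of the form \eqref{eqn:basishandle}, coming from basis arcs; the attaching curve $\gamma\subset\partial M$ of the new handle, pulled back by $f$, will in general not be of this form for the given book (for instance it may run through the regions of $\partial M(S,P,h,\mathbf{c})$ already altered by the earlier $2$-handle attachments), so the appended data is not a partial open book decomposition and the identity $\theta(M_2,\Gamma_2,\xi_2)=(\mathscr{H}_2\circ\mathscr{H})(\mathbf{1})$ does not follow "directly from the definition," even granting Theorem \ref{thm:well-defined4}. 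What your route actually needs is an adapted relative Giroux statement: a decomposition of $(M,\Gamma,\xi)$ which extends, by one additional basis arc whose associated curve is (the image of) $\gamma$, to a decomposition of $(M_2,\Gamma_2,\xi_2)$. Such a statement is true but is where essentially all the content of this case lies, and you neither state nor prove it. The paper takes a different route precisely to avoid this: it unwinds Definition \ref{def:2handle}, writing $\mathscr{H}_2=\mathscr{H}_1^{-1}\circ\SHItfun(f)\circ F_{\gamma'}$ with $\gamma'$ Legendrian, and then combines the already-proved $i=1$ case, functoriality of $\theta$ under contact $(+1)$-surgery (Proposition \ref{prop:contactsurgeryrelativegirouxinstanton}, whose proof is where the adapted open book---with the Legendrian included in the graph---is invoked), and contactomorphism invariance (Proposition \ref{prop:contactomorphism}). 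To complete your argument you must either prove the adapted-open-book extension statement in the spirit of \cite{hkm4,hkm5}, or fall back on the paper's factorization.
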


The invariant $\theta$ shares several important features with Honda, Kazez, and Mati{\'c}'s invariant and with our contact invariant in sutured monopole homology (besides the one above). Among these are the following two results (stated later as Theorems \ref{thm:otinstanton} and \ref{thm:stein}). Interestingly, the proofs of both theorems below are substantially different from those of their counterparts in \cite{bsSHM} in the sutured monopole homology setting.

\begin{theorem}
\label{thm:introot}
If $(M,\Gamma,\xi)$ is overtwisted, then $\theta(M,\Gamma,\xi)=0$.
\end{theorem}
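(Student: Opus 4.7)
The plan is to reduce the vanishing to an explicit computation on a standard neighborhood of an overtwisted disk and then propagate that vanishing by the naturality of $\theta$ under contact handle attachments (Theorem \ref{thm:handletheta}). Concretely: (i) locate an overtwisted disk in $(M,\Gamma,\xi)$ and extract a convex local model, (ii) build $(M,\Gamma,\xi)$ from this model by contact handles, and (iii) reduce to showing $\theta$ vanishes on the model, which is itself an explicit calculation.

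In more detail, suppose $\xi$ is overtwisted and $D\subset \inr(M)$ is an overtwisted disk. Let $N$ be a standard convex neighborhood of $D$, equipped with an appropriate dividing set $\Gamma_N\subset\partial N$, and set $\xi_N:=\xi|_N$. One can arrange that $(M,\Gamma,\xi)$ is obtained from $(N,\Gamma_N,\xi_N)$ by a sequence of contact $0$-, $1$-, and $2$-handle attachments; by Theorem \ref{thm:handletheta}, $\theta(M,\Gamma,\xi)$ is the image of $\theta(N,\Gamma_N,\xi_N)$ under the resulting composition of handle maps, so it suffices to prove $\theta(N,\Gamma_N,\xi_N)=0$.

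To compute $\theta(N,\Gamma_N,\xi_N)$, I would choose a partial open book decomposition of $(N,\Gamma_N,\xi_N)$ adapted to the overtwisted disk and trace through the definition, writing $\theta(N,\Gamma_N,\xi_N)=\mathscr{H}(\mathbf{1})$ for an explicit composition $\mathscr{H}$ of contact $2$-handle maps. The aim is to realize one of the handle attachment cobordisms between closures as a cobordism containing an essential embedded $2$-sphere --- the sphere arising because, once the overtwisted disk is present, one of the attaching circles in $H(S)$ becomes null-homotopic in the chosen closure --- and then to deduce vanishing of the corresponding instanton Floer cobordism map. The main obstacle is precisely this last step: unlike in the Heegaard Floer and monopole settings, there are no $\Sc$ decompositions or $U$-actions available, so the vanishing must be extracted purely from the geometry of the cobordism, for instance via a neck-stretching/connected-sum argument in Kronheimer--Mrowka's closed-manifold instanton theory together with the trivialization of $I^\#$ under a connect sum with a suitable $S^1\times S^2$ summand. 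Pinning down a concrete such sphere inside a closure of the local model, and invoking (or deriving) the required vanishing result in $I^\#$, is where the bulk of the work will lie and where the argument will necessarily diverge from its $\SHM$ counterpart in \cite{bsSHM}.
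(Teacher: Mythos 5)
Your outline has the right general shape (reduce to a model where an explicit cobordism map can be shown to vanish, then propagate by naturality), but the heart of the argument---the vanishing for the local model---is missing, and the mechanism you sketch for it cannot work. First, a smaller point: Theorem \ref{thm:handletheta} is only proved for contact $0$-, $1$-, and $2$-handles, so your reduction to a convex neighborhood of the overtwisted disk requires arranging a handle decomposition of the complement with no $3$-handles. More seriously, a null-homotopic attaching circle produces, in the associated surgery cobordism between closures, an embedded sphere whose square is determined by the framing (typically $0$), and no vanishing theorem in this instanton setting applies to such a sphere. Indeed, in the proof of Theorem \ref{thm:stein} the contact $2$-handle maps are induced by exactly such cobordisms---$0$-framed surgery on unknots bounding disks---and those maps are shown there to be \emph{injective}; an argument of the form ``an essential sphere in the cobordism forces the map to vanish'' would therefore prove too much. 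Nor is $I^\#$ and its behavior under connected sums the relevant theory: the invariant lives in the eigenspace summands $I_*(Y|R)_w$ of closures equipped with nontrivial bundles, and connected sum with $S^1\times S^2$ does not annihilate these groups.

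The paper's actual source of vanishing is adjunction, obtained by a trefoil trick rather than a sphere. Inside a neighborhood $N$ of the overtwisted disk $D$, take a Legendrian right-handed trefoil $K$ with $tb(K)=1$, $rot(K)=0$ in a Darboux ball disjoint from $D$, and form $K' = K \# \partial D$, a Legendrian trefoil with $tb(K')=2$ and a genus-$1$ Seifert surface $\Sigma\subset N$. Writing $(M,\Gamma,\xi)$ as contact $(+1)$-surgery on a pushoff $K''$ of $K'$ inside the contact $(-1)$-surgered manifold $(M_-,\Gamma_-,\xi_-)$, the associated $2$-handle cobordism between closures contains a closed surface of genus $1$ and self-intersection $tb(K')-1=1$, violating the adjunction inequality; by \cite{km6} the induced cobordism map vanishes, and by Proposition \ref{prop:contactsurgeryrelativegirouxinstanton} this map carries $\theta(M_-,\Gamma_-,\xi_-)$ to $\theta(M,\Gamma,\xi)$, forcing $\theta(M,\Gamma,\xi)=0$. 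Note that this argument works directly in $(M,\Gamma,\xi)$ and needs no reduction to a local model; if you wish to salvage your outline, you must produce a surface of positive square and positive genus (not a sphere) in the relevant cobordism, which is precisely what the connected sum with $\partial D$ provides.
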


For the next theorem, suppose $(Y,\xi)$ is a closed contact 3-manifold and let $Y(n)$ denote the sutured manifold obtained by removing $n$ disjoint Darboux balls for any $n\geq 1$.

\begin{theorem}
\label{thm:introstein}
If $(Y,\xi)$ is Stein fillable, then $\theta(Y(n),\xi|_{Y(n)})\neq 0$.
\end{theorem}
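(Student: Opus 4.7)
I would deduce the theorem from the following more general statement, from which Theorem \ref{thm:introstein} follows at once via the tautological inclusion $(Y(n),\xi|_{Y(n)})\hookrightarrow (Y,\xi)$:

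\emph{If a sutured contact manifold $(M,\Gamma,\xi)$ embeds as a sutured contact submanifold of a closed Stein fillable contact 3-manifold $(Y,\eta)$, then $\theta(M,\Gamma,\xi)\neq 0$.}

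To prove this, I would show that $(M,\Gamma,\xi)$ admits an extension, via a finite sequence of contact 0-, 1-, and 2-handle attachments, to a product sutured contact manifold $(Z,\Gamma_Z,\xi_Z)$. If $\mathscr{H}$ denotes the composition of the associated handle attachment maps, then iterated application of Theorem \ref{thm:handletheta} yields
\[
  \mathscr{H}(\theta(M,\Gamma,\xi)) \;=\; \theta(Z,\Gamma_Z,\xi_Z) \;=\; \mathbf{1} \;\neq\; 0,
\]
where the last equality uses that a product sutured contact manifold is of the form $H(S)$ and thus has contact invariant $\mathbf{1}$ by definition. Since $\mathscr{H}$ is $\C$-linear and its output is nonzero, the input $\theta(M,\Gamma,\xi)$ must be nonzero as well.

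The construction of the extension would proceed by exploiting a Weinstein handle decomposition of the Stein filling $W$ of $(Y,\eta)$, which by Eliashberg's theorem can be arranged to use only handles of index at most two. At the 3-dimensional boundary, each Weinstein 1-handle on $W$ corresponds to a contact 1-handle attachment, while each Weinstein 2-handle (a Legendrian surgery) can be realized as a contact 2-handle attachment in the sutured contact category, possibly combined with auxiliary contact 1-handles. In the special case $M = Y(n)$, the concrete scheme would first merge the $n$ boundary spheres of $Y(n)$ via $(n-1)$ contact 1-handles, and then perform the sequence of contact 2-handle attachments dictated by the Weinstein structure of $W$, terminating in a standard tight 3-ball $B^3$ with its product sutured structure.

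The main obstacle is the extension itself: one must verify that the Weinstein decomposition of $W$ can indeed be translated, at the boundary, into a sequence of contact handle attachments that terminates at a genuine product sutured contact manifold, and that the whole process can be arranged without ever invoking a contact 3-handle (whose associated map is not known to preserve $\theta$). The Stein hypothesis is essential precisely because it supplies the subcritical handle decomposition that keeps the boundary operations within contact 0-, 1-, and 2-handles.
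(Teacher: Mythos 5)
Your plan has a fatal topological obstruction at its central step. Contact handle attachment only enlarges a sutured manifold: if $(Z,\Gamma_Z,\xi_Z)$ is obtained from $(M,\Gamma,\xi)$ by attaching contact handles, then $M$ embeds in $Z$ as a codimension-zero submanifold. A product sutured manifold $S\times[-1,1]$ is a handlebody and hence embeds in $S^3$, so your scheme would force $Y(n)=Y\smallsetminus(\text{balls})$ to embed in $S^3$; by Alexander's theorem the boundary spheres would then bound balls on both sides, forcing $Y\cong S^3$. Thus for any Stein fillable $Y\neq S^3$ (including $\#^k(S^1\times S^2)$) there is no sequence of contact handle attachments taking $Y(n)$ to a product sutured contact manifold, and the composition $\mathscr{H}$ you want to apply Theorem \ref{thm:handletheta2} to does not exist. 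The underlying confusion is between Weinstein $2$-handles and sutured contact $2$-handles: a Weinstein $2$-handle on the filling induces Legendrian $(-1)$-surgery on the boundary, which removes and reglues a solid torus inside the $3$-manifold; it is not a contact $2$-handle attachment to the sutured manifold, and the paper has no map preserving $\theta$ in the direction "attach a Weinstein handle." Moreover, the general statement you propose to prove first is exactly Corollary \ref{cor:embedding2}, which the paper deduces \emph{from} Theorem \ref{thm:stein} (the handle decomposition of $Y(n)\smallsetminus\inr(M)$ terminates at $Y(n)$, not at a product, which is precisely why the nonvanishing for $Y(n)$ is needed as input).

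The paper's actual route runs in the opposite direction and requires a computation your plan has no substitute for. Since $(Y,\xi)$ is Stein fillable, it is obtained from the standard tight $\#^k(S^1\times S^2)$ by Legendrian $(-1)$-surgery on a link; equivalently, $\#^k(S^1\times S^2)$ is obtained from $Y$ by contact $(+1)$-surgeries, and Proposition \ref{prop:contactsurgeryrelativegirouxinstanton} gives a map $\SHItfun(-Y(n))\to\SHItfun(-(\#^k(S^1\times S^2))(n))$ carrying $\theta(Y(n))$ to $\theta(\#^k(S^1\times S^2)(n))$. Nonvanishing is then proved at the target by exhibiting an explicit partial open book for $\#^k(S^1\times S^2)(n)$ and showing each contact $2$-handle map in the composition defining $\theta$ is injective; this uses the rank computation $\SHItfun((\#^k(S^1\times S^2))(n))\cong\C^{2^{k+n-1}}$ of Lemma \ref{lem:s1s2} (via the skein triangle for $\KHI$ of unlinks) together with the surgery exact triangle. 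Note that even the base case cannot be handled by mapping to a product: nonvanishing for $\#^k(S^1\times S^2)(n)$ is a genuine Floer-theoretic computation, not a formal consequence of handle naturality. If you want to salvage your approach, you must (a) reverse the direction of your surgery maps to match the $(+1)$-surgery functoriality actually available, and (b) supply an argument that the defining composition of $2$-handle maps for the tight $\#^k(S^1\times S^2)(n)$ is nonzero on the generator.
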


As we shall see, the corollary below (stated later as Corollary \ref{cor:embedding2}) follows   from Theorems \ref{thm:introstein} and \ref{thm:handletheta}.
\begin{corollary}
\label{cor:embedding}
If $(M,\Gamma,\xi)$ embeds into a Stein fillable manifold, then $\theta(M,\Gamma,\xi)\neq 0$.
\end{corollary}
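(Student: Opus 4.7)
The strategy is to reduce Corollary \ref{cor:embedding} to Theorems \ref{thm:introstein} and \ref{thm:handletheta} by realizing $(Y(n),\xi'|_{Y(n)})$ as the result of a sequence of contact $0$-, $1$-, and $2$-handle attachments to $(M,\Gamma,\xi)$. I would begin by fixing a Stein fillable contact manifold $(Y,\xi')$ together with a contact embedding $(M,\Gamma,\xi)\hookrightarrow (Y,\xi')$. After a $C^0$-small perturbation I can assume that the image of $\partial M$ is convex in $(Y,\xi')$, so the complement $W = Y\ssm\inr(M)$, with the induced contact structure, is a sutured contact cobordism whose incoming convex boundary is $(\partial M,\Gamma)$ and whose outgoing boundary is empty.

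The next step is to apply a contact handle decomposition theorem, of the sort underlying the relative Giroux correspondence, to express $W$ as a finite sequence of contact $0$-, $1$-, $2$-, and $3$-handle attachments onto $(\partial M,\Gamma)$. The key observation is that every contact $3$-handle in this sequence can be ``left off'' at the cost of introducing one extra Darboux ball: a contact $3$-handle caps off an $S^2$ boundary component carrying a single dividing curve with a tight $B^3$, so omitting this attachment changes nothing about the ambient contact structure except that a standard Darboux ball remains as an additional sutured component. Performing this replacement on every $3$-handle yields a sequence of contact $0$-, $1$-, and $2$-handle attachments that takes $(M,\Gamma,\xi)$ to $(Y(n),\xi'|_{Y(n)})$ for some $n\geq 1$ (if no $3$-handles appear one can first enlarge $W$ by removing an auxiliary Darboux ball from $Y$ disjoint from $M$).

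With this decomposition in hand, the rest of the argument is formal. Let $\mathscr{H}$ denote the composition of the contact handle attachment maps associated to this sequence; iterating Theorem \ref{thm:handletheta} gives
\[ \mathscr{H}\bigl(\theta(M,\Gamma,\xi)\bigr) \;=\; \theta(Y(n),\xi'|_{Y(n)}). \]
By Theorem \ref{thm:introstein} the right-hand side is nonzero, and since $\mathscr{H}$ is a $\C$-module map, this forces $\theta(M,\Gamma,\xi)\neq 0$.

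I expect the main obstacle to lie in the contact-topological input of the second paragraph: producing a handle decomposition of $W$ relative to $\partial M$ cleanly, and verifying that the ``replace each contact $3$-handle by a Darboux ball'' operation produces $(Y(n),\xi'|_{Y(n)})$ built from $(M,\Gamma,\xi)$ by precisely the contact $0$-, $1$-, $2$-handle attachments whose associated maps $\mathscr{H}_i$ were constructed earlier in the paper. Once this compatibility is verified, Theorems \ref{thm:handletheta} and \ref{thm:introstein} do all the remaining work.
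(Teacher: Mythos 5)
Your proposal is correct and follows essentially the same route as the paper: embed $(M,\Gamma,\xi)$ into $(Y(n),\xi'|_{Y(n)})$ with the Darboux balls positioned so that the complement admits a contact handle decomposition using only $0$-, $1$-, and $2$-handles, then combine Theorem \ref{thm:handletheta} with Theorem \ref{thm:introstein}. Your explicit ``trade each contact $3$-handle for a leftover Darboux ball'' step is exactly what the paper means by choosing the Darboux balls appropriately, so the two arguments coincide in substance.
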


In a related direction, we conjecture the following, which is an instanton Floer analogue of a theorem of Plamenevskaya  regarding the contact invariant in Heegaard Floer homology \cite{pla3}.  

\begin{conjecture}
\label{conj:steinrank} Suppose $J_1,J_2$ are  Stein structures on a smooth $4$-manifold $X$ such that $c_1(J_1)- c_1(J_2)$ is nontorsion. Let $\xi_1,\xi_2$ be the induced contact structures on $Y=\partial X$. Then the contact invariants \[\theta(Y(1),\xi_1|_{Y(1)})\,\,\,\,\,{\rm and}\,\,\,\,\,\theta(Y(1),\xi_2|_{Y(1)})\]  are linearly independent in $\SHItfun(-Y(1))$.
\end{conjecture}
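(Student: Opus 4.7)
The plan is to adapt Plamenevskaya's strategy from the Heegaard Floer setting \cite{pla3} to instanton Floer homology. Decompose the Stein filling $X$ into $B^4$ together with Weinstein $1$- and $2$-handles adapted to the Stein structure $J_i$; removing a small Darboux ball yields a sequence of contact $1$- and contact $2$-handle attachments starting at the sutured Darboux ball $(S^3(1),\xi_{\mathrm{std}}|_{S^3(1)})$ and ending at $(Y(1),\xi_i|_{Y(1)})$. By repeated application of Theorem \ref{thm:handletheta}, the contact invariant $\theta(Y(1),\xi_i|_{Y(1)})$ equals the image of the unit $\mathbf{1}\in\SHItfun(-S^3(1))\cong\C$ under the composition $\mathscr{H}_{X,J_i}$ of the associated contact handle maps. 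The conjecture therefore reduces to proving that $\mathscr{H}_{X,J_1}(\mathbf{1})$ and $\mathscr{H}_{X,J_2}(\mathbf{1})$ are linearly independent in $\SHItfun(-Y(1))$.

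To distinguish these two images one needs a refinement of the contact handle maps that records the first Chern class of the Stein structure. Since instanton Floer homology admits no $\Sc$ decomposition of the sort powering Plamenevskaya's argument, the refinement should instead come from the action of $\mu$-classes of surfaces in $X$ on the instanton Floer homology of the closures used to compute $\SHItfun$. Concretely, since $c_1(J_1)-c_1(J_2)$ is nontorsion, one can choose a closed surface $\Sigma\subset X$ with $\langle c_1(J_1)-c_1(J_2),[\Sigma]\rangle\neq 0$. The goal is then to prove that, after passing to an appropriate closure, the two images $\mathscr{H}_{X,J_i}(\mathbf{1})$ lie in distinct generalized eigenspaces of an operator built from $\mu(\Sigma)$, with eigenvalues governed linearly by the pairings $\langle c_1(J_i),[\Sigma]\rangle$.

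The main obstacle, and the reason the statement is posed only as a conjecture, is the need for a Stein-type Chern class formula relating contact handle attachments to shifts in $\mu(\Sigma)$-eigenvalues of the induced cobordism maps on closures. A reasonable first step is to establish such a formula for a single Weinstein $2$-handle attached along a Legendrian unknot of prescribed rotation number, where the $c_1$-dependence should manifest as an explicit rotation-number shift in the cobordism map on the closure. A further subtlety is that any such eigenvalue decomposition must be natural under the canonical closure-change isomorphisms of \cite{bs3} that define the projective system $\SHItfun$, so that the linear independence assertion makes sense intrinsically on $\SHItfun(-Y(1))$ rather than only on an individual closure.
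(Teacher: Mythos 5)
First, note that the statement you are trying to prove is stated in the paper only as a conjecture (Conjecture \ref{conj:steinrank}); the paper offers no proof, and only explains in Section \ref{sec:stein} how it would imply Conjecture \ref{conj:steinpi1}. Your proposal is likewise not a proof: everything after the first paragraph is an acknowledged wish list, and the decisive ingredient---a refinement of the contact handle maps in which the images of $\mathbf{1}$ land in distinct generalized eigenspaces of an operator built from $\mu(\Sigma)$ for a surface $\Sigma\subset X$ with eigenvalues controlled by $\langle c_1(J_i),[\Sigma]\rangle$---does not exist in the paper or elsewhere. In the framework here, $\mu$-class eigenspace decompositions are only used for surfaces $R$ inside the three-dimensional closures (Definition \ref{def:shitwisted}), and $\theta$ is defined purely three-dimensionally via contact handle attachments; there is no cobordism map associated to the filling $X$ in this theory on which $\mu(\Sigma)$ for $\Sigma\subset X$ could act, nor any established action of $H_2(X)$ on $\SHItfun(-Y(1))$. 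So the gap is precisely the conjecture itself, repackaged.

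There is also a concrete error in your reduction step. A Weinstein $2$-handle attached to the filling corresponds to contact $(-1)$-surgery (Legendrian surgery) on the boundary, which is \emph{not} a contact $2$-handle attachment in the sutured sense: by the construction in Subsection \ref{ssec:2handles} and Proposition \ref{prop:contactsurgeryrelativegirouxinstanton}, sutured contact $2$-handles correspond to contact $(+1)$-surgeries. This is why the paper's own nonvanishing argument (Theorem \ref{thm:stein}) runs in the opposite direction: it presents $(Y,\xi_i)$ as $(-1)$-surgery on a link in $\#^k(S^1\times S^2)$ and produces a map \emph{out of} $\SHItfun(-Y(n))$, induced by the inverse $(+1)$-surgeries, carrying $\theta(Y(n),\xi_i)$ to the contact class of the tight $\#^k(S^1\times S^2)$. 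In particular both $\theta(Y(1),\xi_1|_{Y(1)})$ and $\theta(Y(1),\xi_2|_{Y(1)})$ map to the \emph{same} nonzero class under that map, so this mechanism alone can never separate them; any argument distinguishing them must introduce genuinely new structure sensitive to $c_1$, which is exactly what remains open.
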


Note that one needs some kind of naturality for a statement like that in Conjecture \ref{conj:steinrank} since ``linear independence" has little meaning if elements are only well-defined up to isomorphism. This  is precisely the sort of consideration that motivated our work in \cite{bs3}.  

As we explain in Section \ref{sec:stein}, a positive answer to Conjecture \ref{conj:steinrank} would imply the following link between the fundamental group of a contact 3-manifold and properties of its Stein fillings.

\begin{conjecture}
\label{conj:steinpi1} Suppose $Y$ is  an integer homology 3-sphere which bounds a Stein 4-manifold $(X,J)$ with $c_1(J)\neq 0$. Then there exists a nontrivial  homomorphism $\rho:\pi_1(Y)\to SU(2).$
\end{conjecture}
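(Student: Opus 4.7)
The plan is to derive Conjecture \ref{conj:steinpi1} as a consequence of Conjecture \ref{conj:steinrank} together with a dimensional bound on $\SHItfun(-Y(1))$ coming from the triviality of the $SU(2)$-character variety of $\pi_1(Y)$.

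First, from the given Stein filling $(X,J)$ with $c_1(J)\neq 0$, I would construct a 4-manifold $X'$ with $\partial X'=Y$ carrying two Stein structures $J_1,J_2$ whose first Chern classes differ by a nontorsion class and whose induced contact structures $\xi_1,\xi_2$ on $Y$ are not isotopic. One concrete approach is to take $X'$ to be a suitable blowup of $X$, where distinct Stein structures arise from different Legendrian attaching data on exceptional divisors; since $Y$ is a $\Z$-homology sphere, the long exact sequence of $(X',Y)$ forces $H^2(X';\Z)$ to be torsion-free, so that any nonzero difference of first Chern classes is automatically nontorsion. Arranging in addition that $c_1^2$ differ then distinguishes $\xi_1$ from $\xi_2$ via their $d_3$-invariants.

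Next, applying Conjecture \ref{conj:steinrank} to the pair $(X',J_1),(X',J_2)$ yields that $\theta(Y(1),\xi_1|_{Y(1)})$ and $\theta(Y(1),\xi_2|_{Y(1)})$ are linearly independent in $\SHItfun(-Y(1))$. In particular, $\dim_\C\SHItfun(-Y(1))\geq 2$. To close the argument, I would establish the contrapositive rank bound: if the only homomorphism $\pi_1(Y)\to SU(2)$ is trivial, then $\dim_\C\SHItfun(-Y(1))=1$. After a small generic holonomy perturbation, the only critical point of the Chern--Simons functional contributing to the instanton chain complex computing $\SHItfun(-Y(1))$ would be a perturbation of the trivial connection, giving the claimed bound (as is the case for $Y=S^3$). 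The incompatibility of this bound with the rank estimate $\geq 2$ produces a nontrivial $\rho:\pi_1(Y)\to SU(2)$.

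The main obstacle is the rank-one estimate in the last step. While it is the natural instanton-Floer-theoretic counterpart of the input to Kronheimer--Mrowka's proof of Property P, establishing it rigorously requires careful control of holonomy perturbations, delicate handling of the reducible trivial connection, and a precise identification of $\SHItfun(-Y(1))$ with the framed instanton Floer homology $I^{\#}(Y)$. It is precisely this Floer-theoretic rank bound, rather than the contact-geometric construction of $J_1,J_2$ or the naturality inputs, that keeps Conjecture \ref{conj:steinpi1} from being a theorem.
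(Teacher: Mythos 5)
Your overall strategy---derive the statement from Conjecture \ref{conj:steinrank} via a rank bound on $\SHItfun(-Y(1))$---is the paper's strategy, but both ends of your argument have problems. For the construction of the two Stein structures, a blowup of a Stein domain is never Stein (a homologically essential sphere of square $-1$ violates the adjunction inequality in a Stein surface), so ``different Legendrian attaching data on exceptional divisors'' does not produce the required pair $J_1,J_2$; moreover the extra conditions you impose (non-isotopic $\xi_1,\xi_2$, distinct $d_3$-invariants) are not needed for Conjecture \ref{conj:steinrank}. The paper's move is much simpler: keep the same $X$ and take $J_1=J$, $J_2=\bar J$ the conjugate Stein structure, so $c_1(J)-c_1(\bar J)=2c_1(J)$; since $Y$ is a homology sphere and $X$ has only $1$- and $2$-handles, $H^2(X)\cong H_2(X)$ is torsion-free, so this class is nontorsion and Conjecture \ref{conj:steinrank} applies directly, giving $\operatorname{rk}\SHItfun(-Y(1))\geq 2$.

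The more serious gap is your last step. You reduce to the claim ``no nontrivial $SU(2)$ representation of $\pi_1(Y)$ implies $\dim_\C\SHItfun(-Y(1))=1$,'' propose to prove it by perturbing the trivial connection in a framed/instanton model, and then concede that this analytic step is not established---concluding that it is what keeps Conjecture \ref{conj:steinpi1} from being a theorem. That mislocates the difficulty and leaves your implication incomplete. The paper closes this step unconditionally: since $Y(1)$ and $Y(U)$ ($U$ an unknot) differ by a contact $1$-handle, $\operatorname{rk}\KHIt(Y,U)\geq 2$; if $\pi_1(Y\ssm U)$ admitted no irreducible $SU(2)$ representation sending a meridian to $\mathbf{i}$, then (as $H_1(Y\ssm U)\cong\Z$) there is exactly one reducible, the complex computing $I^{\natural}(Y,U)$ has a single generator (this is standard in the singular setting, e.g.\ via Hedden--Herald--Kirk; the meridian holonomy condition sidesteps the trivial-connection issues you worry about), so $I^{\natural}(Y,U)\cong\Z$, contradicting Kronheimer--Mrowka's isomorphism $\KHIt(Y,U)\cong I^{\natural}(Y,U)\otimes\C$ and the rank bound. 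An irreducible representation of $\pi_1(Y\ssm U)\cong\pi_1(Y)*\Z$ then restricts to a nontrivial homomorphism $\pi_1(Y)\to SU(2)$. Thus the only conjectural input is Conjecture \ref{conj:steinrank}; your version instead leaves a second unproved Floer-theoretic rank-one estimate, so as written it does not establish the implication the paper proves.
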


It was pointed out to us by Tom Mrowka that the conclusion of Conjecture \ref{conj:steinpi1} holds by arguments similar to those used in the proof of the Property P conjecture \cite{km2} if the Stein filling has $b_2^+>0$. However, this leaves a lot of Stein fillable contact structures behind. For instance, Etnyre \cite{et} shows  that if a contact structure  is supported by a planar open book, then all of its Stein fillings are negative definite.

In light of Conjecture \ref{conj:steinpi1}, it is  natural to ask whether there exist \emph{any} integer homology spheres other than $S^3$ whose fundamental group admits no nontrivial $SU(2)$ representations? The main result of \cite{km7} implies that the answer is ``no" among integer homology spheres arising from surgery on knots in $S^3$. In general, however, the question seems to be  wide open.



Finally, it is worth mentioning that Conjecture \ref{conj:steinpi1} would also follow from Plamenevskaya's work in \cite{pla3}, combined with the conjectural isomorphism between $\SHIt(Y(1))$ and $\hf(Y)\otimes\C$ proposed in \cite{km4}, but the latter seems more difficult to establish than Conjecture \ref{conj:steinrank}.

\subsection{Future  directions} 
\label{ssec:future} Two of our future projects involve defining sutured cobordism maps and bordered invariants in the instanton Floer setting as mentioned briefly below.

Suppose  $(M,\Gamma)$ is a sutured submanifold of $(M',\Gamma')$ and  $\xi$ is a contact structure on $M'\ssm \inr(M)$ with  dividing set $\Gamma\cup \Gamma'$. Note that $(M'\ssm \inr(M),\Gamma \cup \Gamma',\xi)$ can be obtained from a vertically invariant contact structure on $\partial M\times I$ by attaching contact handles. Given such a handle decomposition $H$, we may then define  \[\Phi_{\xi,H}:\SHItfun(-M,-\Gamma)\to\SHItfun(-M',-\Gamma')\] to be the corresponding composition of contact handle attachment maps. 
A similar map was defined by Honda, Kazez, and Mati{\'c} in \cite{hkm5} in the setting of sutured Heegaard Floer homology (see also \cite{bsSHM} in the setting of sutured monopole homology). Their map depends only on $\xi$ and we conjecture that the same is true for the map above.

\begin{conjecture}
\label{conj:H} The map $\Phi_{\xi,H}$ is independent of $H$.
\end{conjecture}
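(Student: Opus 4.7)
The plan is to mimic the strategy used by Honda, Kazez, and Mati{\'c} \cite{hkm5} in sutured Heegaard Floer homology and by the authors in \cite{bsSHM} for sutured monopole Floer homology: reduce the problem to checking that $\Phi_{\xi,H}$ is preserved under a finite set of elementary moves that generate all transitions between contact handle decompositions of the cobordism $(M'\ssm \inr(M),\Gamma\cup\Gamma',\xi)$ built on $\partial M\times I$.

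First I would establish a contact-Cerf theorem for sutured contact cobordisms with convex boundary: any two contact handle decompositions are related by a finite sequence of (a) contact isotopy of the attaching data, (b) reordering of handles whose attaching regions can be made disjoint after contact isotopy, and (c) creation or cancellation of a geometrically canceling pair of contact $i$- and $(i+1)$-handles. Such a statement can plausibly be derived by combining convex surface theory and the relative Giroux correspondence with classical smooth handle calculus.

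Next I would verify invariance of $\Phi_{\xi,H}$ under each of these moves. Move (a) is essentially tautological, since contact isotopic attaching data yield the same smooth cobordism between closures up to diffeomorphism rel boundary, and each contact handle map $\mathscr{H}_i$ depends only on the underlying smooth data. For (b) I would use functoriality of the instanton Floer cobordism maps: a cobordism assembled from two disjointly-attached handles factors through either intermediate sutured manifold and induces the same composite map in both orderings. The critical case is (c): one must show that $\mathscr{H}_{i+1}\circ\mathscr{H}_i$ is the identity on $\SHItfun$ whenever the two contact handles form a canceling pair. I would argue this by choosing compatible closures on either side and identifying the composed cobordism with a product cobordism, whose induced instanton Floer map is the identity.

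The main obstacle is the cancellation step (c). Producing a single closure of the intermediate balanced sutured manifold that is simultaneously compatible with closures on either side, and then realizing the two-handle composition as a product cobordism, requires careful geometric bookkeeping analogous to---but more delicate than---the corresponding argument in \cite{bsSHM}, since the instanton setting lacks the \Sc-structure refinement available in monopole Floer homology. A secondary obstacle is the contact-Cerf statement itself; a clean version tailored to sutured contact cobordisms will likely require combining the relative Giroux correspondence with an isotopy and stabilization analysis, and some care will be needed to handle the effects of rounding corners and altering sutures as handles are shuffled.
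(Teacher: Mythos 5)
You are proposing a proof of Conjecture \ref{conj:H}, which the paper explicitly leaves open: there is no proof in the paper to compare against, and what you have written is a strategy outline rather than a proof---you identify the two essential steps (a contact-Cerf theorem for sutured contact cobordisms, and invariance under handle cancellation) and then acknowledge that both remain to be established. Those are precisely the missing ingredients, so the proposal does not close the conjecture.

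Two concrete gaps. First, the ``contact-Cerf'' input does not exist in the form you need, and your list of moves is too short even at the smooth level: relating two handle decompositions of a cobordism requires handle \emph{slides} in addition to isotopies, reorderings, and creation/cancellation of pairs, and it is unclear what the corresponding contact moves are or that they can always be realized through contact handle structures compatible with $\xi$; this is a large part of why the statement is a conjecture. (Note also that \cite{hkm5} does not prove well-definedness of the gluing map via a handle-Cerf argument, so the analogy does not supply this step.) Second, the cancellation step is both misstated and unproved. For a canceling contact $i$/$(i{+}1)$ pair the source and target sutured manifolds are different, so $\mathscr{H}_{i+1}\circ\mathscr{H}_i$ cannot literally be the identity; the correct claim is that it equals $\SHItfun$ of the canonical diffeomorphism. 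Moreover, at the level of closures the composite is \emph{not} a product cobordism: the contact $1$-handle map is the identity on a shared closure, while the contact $2$-handle map is induced by a genuine surgery ($2$-handle) cobordism, so the composite is a surgery cobordism map, and what must be shown is that it induces the canonical isomorphism between the relevant closures. In the one instance the paper needs (the stabilization argument of Proposition \ref{prop:invariancestab}), this is achieved by isotoping the surgery curve $\gamma_0$ to a curve $\beta'$ lying in the positive region of the boundary, hence into $r(R\times\{t\})$ with the correct framing, so that the cobordism is one of those used to define the canonical isomorphisms; that argument exploits the special geometry of the stabilization curve and does not obviously generalize to an arbitrary canceling pair inside an arbitrary sutured contact cobordism. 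Until you supply both a complete move set and a general cancellation (and slide) invariance argument of this kind, the conjecture remains open.
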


A positive answer to this conjecture would allow us assign well-defined maps  to cobordisms between sutured manifolds in the instanton Floer setting---in the language of \cite{bs3}, to extend $\SHItfun$ to a functor from $\CobSut$  to $\RPSys[\C]$---following Juhasz's strategy \cite{juhasz3}, as explained in the analogous context of sutured monopole  in \cite[Subsection 1.3]{bsSHM}. And this, in turn, would allow us to define invariants of bordered 3-manifolds using instanton Floer homology, following a strategy of Zarev \cite{zarev}; again, see  \cite[Subsection 1.3]{bsSHM} for the analogous discussion in the sutured monopole Floer setting.

\subsection{Organization} 
In Section \ref{sec:prelims}, we provide the necessary background on projectively transitive systems, sutured instanton homology, and the relative Giroux correspondence. In Section \ref{sec:handlemaps}, we define the contact handle attachment maps mentioned above. In Section \ref{sec:instantoninvt}, we define the contact invariant $\theta$ and establish some basic properties of this invariant, proving Theorems \ref{thm:handletheta}, \ref{thm:introot}, \ref{thm:introstein}, and Corollary \ref{cor:embedding}. Finally, in Section \ref{sec:stein}, we explain how a positive answer to Conjecture \ref{conj:steinrank} would imply a positive answer to  Conjecture \ref{conj:steinpi1}.
\subsection{Acknowledgements} We thank Peter Kronheimer, Tye Lidman, Tom Mrowka, and Vera V{\'e}rtesi for helpful conversations.

\section{Preliminaries}
\label{sec:prelims}
In this section, we review the notion of a projectively transitive system, the construction of sutured instanton homology, and the relative Giroux correspondence.

\subsection{Projectively transitive systems of $\C$-modules}

\label{ssec:transitive-systems}
In \cite{bs3} we introduced \emph{projectively transitive systems} to make precise the idea of a collection of modules being canonically isomorphic up to multiplication by a unit. We recount their definition and related notions below, focusing on modules over $\C$.

\begin{definition}
Suppose $M_\alpha$ and $M_\beta$ are $\C$-modules. We say that  elements $x,y\in M_\alpha$ are \emph{equivalent} if $x=u\cdot y$ for some  $u\in \C^\times$. Likewise,  homomorphisms \[f,g: M_\alpha \to M_\beta\]  are  \emph{equivalent} if $f=u\cdot g$ for some  $u\in \C^\times$.  
\end{definition}

\begin{remark}
We will write $x\doteq y$ or $f\doteq g$ to indicate that two  elements or homomorphisms are equivalent, and  will denote their equivalence classes by $[x]$ or $[f]$.
\end{remark}

Note that \emph{composition} of  equivalence classes of homomorphisms is well-defined, as is the \emph{image} of the equivalence class of an element under an equivalence class of homomorphisms.

\begin{definition}
 A \emph{projectively transitive system of $\C$-modules} consists of a set $A$ and:
\begin{enumerate}
\item a collection of $\C$-modules $\{M_\alpha\}_{\alpha \in A}$ and
\item a collection of equivalence classes of homomorphisms $\{g^\alpha_\beta\}_{\alpha,\beta \in A}$ such that:
\begin{enumerate}
\item elements of the equivalence class $g^\alpha_\beta$ are isomorphisms from $M_\alpha$ to $M_\beta$, 
\item  $g^\alpha_\alpha=[id_{M_\alpha}]$,
\item $g^\alpha_\gamma = g^\beta_\gamma \circ g^\alpha_\beta$.
\end{enumerate}
\end{enumerate}
\end{definition}


\begin{remark}The equivalence classes  of homomorphisms in a projectively transitive system of $\C$-modules can  be thought of as specifying canonical isomorphisms between the modules in the system that are well-defined up to multiplication by units in $\C$. 
\end{remark}


The class of  projectively transitive systems of $\C$-modules forms a category $\RPSys[\C]$ with the following notion of morphism.

\begin{definition}
\label{def:projtransysmor} A \emph{morphism} of  projectively transitive systems of $\C$-modules \[F:(A,\{M_{\alpha}\},\{g^{\alpha}_{\beta}\})\to(B,\{N_{\gamma}\},\{h^{\gamma}_{\delta}\})\]  is  a collection of equivalence classes  of homomorphisms $F=\{F^\alpha_\gamma\}_{\alpha\in A,\,\gamma\in B}$  such that:
\begin{enumerate}
\item elements of the equivalence class $F^\alpha_\gamma$ are homomorphisms from $M_\alpha$ to $N_\gamma$,
\item \label{eqn:projtransysmor} $F^\beta_\delta\circ g^\alpha_\beta = h^\gamma_\delta\circ F^\alpha_\gamma$.
\end{enumerate}  
Note that $F$ is an \emph{isomorphism} iff the elements in each equivalence class $F^\alpha_\gamma$ are isomorphisms.
\end{definition}

\begin{remark}\label{rmk:completesubset} A collection of equivalence classes of homomorphisms $\{F^\alpha_\gamma\}$  with indices  ranging over any nonempty subset of $A\times B$ can be uniquely completed to a morphism as long as this collection satisfies the compatibility in \eqref{eqn:projtransysmor} where it makes sense.
\end{remark}

\begin{remark}
\label{rmk:systemofsystems}
Suppose $\{\mathcal{S}_\alpha\}_{\alpha\in A}$ is a collection of projectively transitive systems of $\C$-modules and \[\{f_{\alpha,\beta}:\mathcal{S}_{\alpha}\to\mathcal{S}_\beta\}_{\alpha,\beta\in A}\] is a collection of isomorphisms of projectively transitive systems of $\C$-modules which satisfy the transitivity $f_{\alpha,\gamma}=f_{\beta,\gamma}\circ f_{\alpha,\beta}$ for all $\alpha,\beta,\gamma\in A$. Then this \emph{transitive system of systems} defines an even larger projectively transitive system of $\C$-modules in a natural way, whose set of constituent $\C$-modules is the union over all $\alpha\in A$ of the sets of $\C$-modules making up the systems $\mathcal{S}_\alpha$.
\end{remark}

\begin{definition}
\label{def:element}
 An \emph{element} of a projectively transitive system of $\C$-modules \[x\in\mathcal{M}=(A,\{M_{\alpha}\},\{g^{\alpha}_{\beta}\})\]
 is a collection of equivalence classes of elements $x = \{x_{\alpha}\}_{\alpha\in A}$ such that:
\begin{enumerate}
\item elements of the equivalence class $x_{\alpha}$ are elements of $M_\alpha$,
\item \label{eqn:projtransysmorelt} $x_\beta = g^\alpha_\beta(x_\alpha)$.
\end{enumerate}  
\end{definition}

\begin{remark}
\label{rmk:completesubset2}
 As in Remark \ref{rmk:completesubset}, a collection of equivalence classes of elements $\{x_{\alpha}\}$ with indices ranging over any nonempty subset of $A$ can be uniquely completed to an element of $\mathcal{M}$ as long as this collection satisfies the compatibility in \eqref{eqn:projtransysmorelt} where it makes sense.
 \end{remark}
 
We  say that $x$ is a \emph{generator} in $\mathcal{M}$  if each $M_\alpha$ is isomorphic to $\C$ and each $x_{\alpha}$ is the equivalence class of a generator---i.e., nonzero. 
The \emph{zero} element $0\in \mathcal{M}$ is the collection of equivalence classes of the elements $0\in M_\alpha$. Finally, it is clear how to define the image $F(x)$ of an element $x\in \mathcal{M}$ under a morphism $F:\mathcal{M}\to\mathcal{N}$ of projectively transitive systems of $\C$-modules.

\begin{remark} Given a $\C$-module $M$, we can also think of $M$ as the projectively transitive system of $\C$-modules given (in an abuse of notation) by  \[ M=(\{\star\},\{M\}, \{[id_M]\})\] consisting of the single $\C$-module $M$ together with the equivalence class of the identity map, so that it makes sense to write $\mathcal{S}\cong M$, for any other object $\mathcal{S}\in \RPSys[\C]$.
\end{remark}


\subsection{Sutured instanton homology} 
\label{ssec:shi} In this subsection, we describe our refinement in \cite{bs3} of Kronheimer and Mrowka's sutured instanton homology, as defined in \cite{km4}.

\subsubsection{Closures of balanced sutured manifolds}
\begin{definition} 
\label{def:sutured}A \emph{balanced sutured manifold}  $(M,\Gamma)$ is a compact, oriented, smooth 3-manifold $M$ with a collection $\Gamma$ of disjoint, oriented,  smooth curves in $\partial M$ called \emph{sutures}. Let $R(\Gamma) = \partial M\smallsetminus\Gamma$, oriented as a subsurface of $\partial M$. We require that:
\begin{enumerate}
\item neither $M$ nor $R(\Gamma)$ has  closed components,
\item $R(\Gamma) = R_+(\Gamma)\sqcup R_-(\Gamma)$ with $\partial R_+(\Gamma) = -\partial R_-(\Gamma) = \Gamma$,
\item $\chi(R_+(\Gamma)) = \chi(R_-(\Gamma))$.
\end{enumerate}
 \end{definition}

An \emph{auxiliary surface} for $(M,\Gamma)$ is a compact, connected, oriented surface $F$ with $g(F)>0$ and $\pi_0(\partial F)\cong \pi_0(\Gamma)$.  Suppose $F$ is an auxiliary surface for $(M,\Gamma)$, $A(\Gamma)$ is a closed tubular neighborhood of $\Gamma$ in $\partial M$, and    \[h:\partial F\times[-1,1]\rightarrow A(\Gamma)\] is an orientation-reversing diffeomorphism which sends $\partial F\times \{\pm 1\}$ to $\partial (R_{\pm}(\Gamma)\smallsetminus A(\Gamma)).$ One forms a   \emph{preclosure} of $M$ \begin{equation*}\label{eqn:bF}M'=M\cup_h F\times [-1,1]\end{equation*}  by gluing $F\times[-1,1]$ to $M$ according to $h$ and rounding corners.  This preclosure has two diffeomorphic boundary components, $\partial_+ M'$ and $\partial_- M'$. We may therefore glue $\partial_+M'$ to $\partial_-M'$ by some diffeomorphism   to form a closed manifold $Y$ containing a distinguished surface \[R:=\partial_+M' = -\partial_- M'\subset Y.\] In \cite{km4}, Kronheimer and Mrowka define a \emph{closure} of $(M,\Gamma)$ to be any pair $(Y,R)$ obtained in this way.  Our definition of closure, as needed for naturality, is  slightly more involved.

\begin{definition}[\cite{bs3}]
\label{def:smoothclosure} A \emph{marked odd closure} of $(M,\Gamma)$ is a tuple $\data = (Y,R,r,m,\eta,\alpha)$ consisting of:
\begin{enumerate}
\item a closed, oriented,  3-manifold $Y$,
\item  a closed, oriented,  surface $R$ with $g(R)\geq 2$,
\item an oriented, nonseparating, embedded curve $\eta\subset R$,
\item a smooth, orientation-preserving embedding $r:R\times[-1,1]\hookrightarrow Y$,
\item a smooth, orientation-preserving embedding $m:M\hookrightarrow Y\smallsetminus\inr(\Img(r))$ such that: 
\begin{enumerate}
\item $m$ extends  to a diffeomorphism \[M\cup_h F\times [-1,1]\rightarrow Y\smallsetminus{\rm int}(\Img(r))\] for some $A(\Gamma)$, $F$, $h$, as above,
\item $m$ restricts to an orientation-preserving embedding \[R_+(\Gamma)\smallsetminus A(\Gamma)\hookrightarrow r(R\times\{-1\}).\]
\end{enumerate}
\item an oriented, embedded curve $\alpha\subset Y$ such that:
\begin{enumerate}
\item $\alpha$ is disjoint from $\Img(m)$,
\item $\alpha$ intersects $r(R\times[-1,1])$ in an arc of the form $r(\{p\}\times[-1,1])$ for some  $p\in R$.
\end{enumerate}
 \end{enumerate} 
 The \emph{genus} $g(\data)$ refers to the genus of $R$.
\end{definition}


  \begin{remark} Suppose $\data = (Y,R,r,m,\eta,\alpha)$ is a marked odd closure of $(M,\Gamma)$. Then, the tuple \[-\data:=(-Y,-R,r,m,-\eta,-\alpha),\] obtained  by reversing the orientations of $Y$, $R$, $\eta$, and $\alpha$, is a marked odd closure of $-(M,\Gamma):=(-M,-\Gamma),$ where, $r$ and $m$ are the induced embeddings  of $-R\times[-1,1]$ and $-M$ into $-Y$. 
  \end{remark}

\subsubsection{Instanton Floer homology}
Before defining sutured instanton homology, we recall the basic set up of instanton Floer homology from \cite{km4}.

Suppose $Y$ is a closed, oriented, smooth 3-manifold and $w \to Y$ is a Hermitian line bundle  such that  $c_1(w)$ has odd pairing with some  class in $H_2(Y;\Z)$.  Let $E \to Y$ be a $U(2)$ bundle with an isomorphism $\theta:\Lambda^2 E \to w$. Let $\mathcal{C}$ be the space of $SO(3)$ connections on $\operatorname{ad}(E)$ and let $\mathcal{G}$ be the group of determinant-1 gauge transformations of $E$ (the automorphisms of $E$ that respect $\theta$).  The associated instanton Floer homology group, which Kronheimer and Mrowka denote by  $I_*(Y)_w$, is  the  $\Z/8\Z$-graded $\C$-module arising from the Morse homology of the Chern-Simons functional on $\mathcal{C}/\mathcal{G}$ (cf.  \cite{donaldson}).  Given any closed, embedded surface $R\subset Y$ there is a natural operator
\[ \mu(R): I_*(Y)_w \to I_{*}(Y)_w \]
of degree $-2$. When $R$ has genus at least 2, Kronheimer and Mrowka define the submodule
\[ I_*(Y|R)_w \subset I_*(Y)_w \]
to be the eigenspace of $\mu(R)$ with eigenvalue $2g(R)-2$.

Suppose $\alpha$ is an oriented, smooth 1-cycle in $Y$ which intersects a closed, embedded  surface in an odd number of points. One can associate to $(Y,\alpha)$ an instanton Floer  group after first choosing  bundles $w$, $E$, and an isomorphism $\theta$ as above, where  the first Chern class is Poincar{\'e} dual to $\alpha$. This Floer group is itself not an invariant of $(Y,\alpha)$ as it depends on these auxiliary choices. However, given a pair $(Y,\alpha)$, the Floer groups associated to any two sets of auxiliary choices are related by a canonical isomorphism which is well-defined up to sign (cf. \cite[Section 4]{km3}). In particular, the pair $(Y,\alpha)$ defines a projectively transitive system of $\C$-modules, which we will denote by $I_*(Y)_{\alpha}$. The canonical isomorphisms respect the eigenspace decompositions and, so, for a closed embedded surface $R\subset Y$, we may also define the projectively transitive system of $\C$-modules $I_*(Y|R)_{\alpha}$. 

Suppose  $R_1$ and $R_2$ are embedded surfaces in $Y_1$ and $Y_2$ as above. A cobordism $(W,\nu)$ from $(Y_1,\alpha_1)$ to $(Y_1,\alpha_2)$ together with an embedded surface $R_W\subset W$ containing $R_1$ and $R_2$ as components gives rise to a map of projectively transitive systems \[I_*(W|R_W)_{\nu}:I_*(Y_1|R_1)_{\alpha_1}\to I_*(Y_2|R_2)_{\alpha_2}.\] This map depends only on the  homology class $[\nu]\subset H_2(W,\partial W;\mathbb{Z})$ and the isomorphism class of $(W,\nu)$, where two pairs  are isomorphic if they are diffeomorphic by a map which intertwines the boundary identifications. 

\subsubsection{Sutured instanton homology}

Following Kronheimer and Mrowka \cite{km4}, we made the following definition in \cite{bs3}.

\begin{definition}
\label{def:shitwisted}
Given a marked odd closure $\data=(Y,R,r,m,\eta,\alpha)$ of $(M,\gamma)$, the \emph{twisted sutured instanton homology of $\data$} is the projectively transitive system of $\C$-modules  \[\SHIt(\data) :=I_*(Y|r(R\times\{0\}))_{\alpha\,\sqcup\, \eta}.\] 
\end{definition}

\begin{remark}
If $w$ and $u$ are line bundles over $Y$ with first Chern classes represented by $\alpha$ and $\eta$, then the line bundle $w\otimes u$ has first Chern class represented by $\alpha \sqcup\eta$.
\end{remark}

In \cite{bs3}, we constructed canonical isomorphisms \[\Psit_{\data,\data'}:\SHIt(\data)\to\SHIt(\data')\]  for any two marked odd closures $\data,\data'$ of $(M,\Gamma)$, so that \[\Psit_{\data,\data''}=\Psit_{\data',\data''}\circ \Psit_{\data,\data'}\] for all  $\data,\data',\data''$. In other words, the  systems in $\{\SHIt(\data)\}_{\data}$ and the maps in $\{\Psit_{\data,\data'}\}_{\data,\data'}$ form a transitive system of systems and, therefore, a larger projectively transitive system of $\C$-modules as explained in Remark \ref{rmk:systemofsystems}. These isomorphisms are defined almost exactly as in the monopole setting---in terms of $2$-handle or splicing cobordisms depending on whether the genera of $\data$ and $\data'$ are the same or different.

\begin{definition}
The \emph{sutured instanton homology of $(M,\Gamma)$} is the projectively transitive system of $\C$-modules $\SHItfun(M,\Gamma)$ defined by the transitive system of systems above.
\end{definition}

Sutured monopole homology is functorial in the following sense. Suppose \[f:(M,\Gamma)\to(M',\Gamma')\] is a diffeomorphism of sutured manifolds and $\data' = (Y',R',r',m',\eta',\alpha')$ is a marked odd closure of $(M',\Gamma')$. Then \begin{equation}\label{eqn:dataf}\data'_f:=(Y',R',r',m'\circ f,\eta',\alpha')\end{equation} is a marked odd closure of $(M,\Gamma)$. Let \[id_{\data'_f,\data'}: \SHIt(\data'_f)\to\SHIt(\data')\] be the identity map on $\SHIt(\data'_f) = \SHIt(\data')$. The equivalence classes underlying these identity maps can be completed to a morphism (as in Remark \ref{rmk:completesubset}) \[\SHItfun(f):\SHItfun(M,\Gamma)\to\SHItfun(M',\Gamma'),\]  which is an invariant of the isotopy class of $f$. We proved in \cite{bs3} that these morphisms behave as expected under composition of diffeomorphisms, so that  $\SHItfun$ defines a functor from $\DiffSut$ to $\RPSys[\C],$ where $\DiffSut$ is the category of balanced sutured manifolds and isotopy classes of diffeomorphisms between them. 

The following will be important in our definition of the instanton Floer contact invariant.

\begin{proposition}
\label{prop:productsuturedinstanton} If $(M,\Gamma)$ is a product sutured manifold, then $\SHItfun(M,\Gamma)\cong\C$.
\end{proposition}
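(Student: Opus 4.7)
The strategy is to identify an explicit marked odd closure of $(M,\Gamma)$, reduce $\SHIt$ of that closure to a known instanton Floer calculation for a product three-manifold $\Sigma\times S^1$, and then appeal to the fact that any representative being isomorphic to $\C$ forces the whole projectively transitive system to be isomorphic to $\C$.

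First I would write the product sutured manifold as $(M,\Gamma)=(S\times I,\partial S\times\{1/2\})$, so that $R_\pm(\Gamma)=S\times\{\pm\}$. Choose an auxiliary surface $F$ with $\partial F\cong\partial S$ and with $g(F)$ large enough that $R:=S\cup_{\partial}F$ has genus at least $2$. The natural preclosure is $R\times[-1,1]$, obtained by gluing $F\times[-1,1]$ onto $S\times I$ along the suture. Identify $R\times\{1\}$ with $R\times\{-1\}$ by the identity to form the closed manifold $Y=R\times S^1$. For the marking data $\data=(Y,R,r,m,\eta,\alpha)$, take $r:R\times[-1,1]\hookrightarrow R\times S^1$ to be inclusion onto a small subarc of $S^1$ disjoint from the image of $m:S\times I\hookrightarrow R\times S^1$, pick any oriented nonseparating curve $\eta\subset R$, and let $\alpha=\{p\}\times S^1$ for some point $p\in F$ (so that $\alpha$ is disjoint from $\Img(m)$, as required).

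By Definition~\ref{def:shitwisted},
\[
\SHIt(\data)=I_*(R\times S^1\,|\,R\times\{\mathrm{pt}\})_{\alpha\,\sqcup\,\eta}.
\]
This is the classical instanton Floer group of the product three-manifold $R\times S^1$, restricted to the top eigenvalue $2g(R)-2$ of $\mu(R)$, with twisting bundle whose first Chern class is Poincar\'e dual to $\alpha\sqcup\eta=(\{p\}\times S^1)\sqcup(\eta\times\{\mathrm{pt}\})$. This is precisely the computation carried out by Kronheimer and Mrowka in \cite{km4} (and originally going back to Mu\~noz and Dostoglou--Salamon), where the group in the top eigenspace of $\mu(R)$ with this twisting is shown to have rank one over $\C$. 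Consequently the $\C$-module $\SHIt(\data)$ is isomorphic to $\C$.

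Finally, since the projectively transitive system $\SHItfun(M,\Gamma)$ is assembled from the modules $\SHIt(\data')$ for all marked odd closures $\data'$, linked by the canonical isomorphisms $\Psit_{\data,\data'}$, and we have exhibited one representative $\SHIt(\data)\cong\C$, every constituent module is isomorphic to $\C$ through the canonical maps. Hence $\SHItfun(M,\Gamma)\cong\C$ as projectively transitive systems of $\C$-modules. The only nontrivial point in this plan is the rank-one calculation in step two, which is sensitive to the twisting divisor; the main obstacle is therefore to verify that our conventions for the twisting line bundle associated to $\alpha\sqcup\eta$ match those under which Kronheimer and Mrowka obtain a one-dimensional answer, but this is a direct check given that $c_1$ of our bundle pairs oddly with both the surface factor $R\times\{\mathrm{pt}\}$ (via $\alpha$) and a circle factor $\{p'\}\times S^1$ (via $\eta$).
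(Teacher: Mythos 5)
Your proposal is correct and follows essentially the same route as the paper: both form a closure of the product sutured manifold $(S\times I,\partial S\times\{1/2\})$ of the form $(S\cup F)\times S^1$ with $\alpha=\{p\}\times S^1$ and a nonseparating $\eta$, and then invoke Kronheimer and Mrowka's rank-one computation for $I_*(R\times S^1\,|\,R\times\{\mathrm{pt}\})$ in the relevant eigenspace (the paper cites \cite[Proposition 7.8]{km4}), after which one representative module being $\C$ determines the whole projectively transitive system.
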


\begin{proof}
Let $F$ be an auxiliary surface for $(M,\Gamma)$ with $g(F)\geq 2$. Thinking of $(M,\Gamma)$ as  obtained from  $(S\times[-1,1],\partial S\times\{0\})$ by rounding corners, we can  form a preclosure of $(M,\Gamma)$ by gluing $F\times[-1,1]$ to $S\times[-1,1]$ according to a map \[h:\partial F\times[-1,1]\to\partial S\times[-1,1]\] of the form $f\times id$ for some diffeomorphism $f:\partial F\to \partial S$. This preclosure is then a product $M'=(S\cup F)\times[-1,1]$. To form a marked odd closure, we take $R=S\cup F$ and  glue $R\times[-1,1]$ to $M'$ by the ``identity" maps 
\[
R\times\{\pm 1\}\to S\times\{\mp 1\}.
\]
An oriented, nonseparating curve $\eta\subset S\cup F$ and a curve $\alpha = \{p\}\times S^1$ for any point $p\in F$ gives a marked odd closure \[\data = ((S \cup F)\times S^1, (S\cup F),r,m,\eta,\alpha).\] Here, we are thinking of $S^1$ as the union of two copies of $[-1,1]$, and $r$ and $m$ as the obvious embeddings. The system $\SHIt(\data)$ is then given by  \[I_*((S \cup F)\times S^1|(S\cup F)\times\{0\})_{\alpha\,\sqcup\,\eta}\cong\C,\] where this isomorphism follows from \cite[Proposition 7.8]{km4}. \end{proof}

\subsection{The relative Giroux correspondence}
\label{ssec:relgiroux}

Below, we review the relative Giroux correspondence between partial open books and sutured contact manifolds. Our discussion of  this correspondence differs slightly in style but not in substance from the discussions in \cite{etguozbagcirelative,hkm4}.

\begin{definition} 
\label{def:POB}A \emph{partial open book} is a quadruple $(S,P,h,\mathbf{c})$, where: 
\begin{enumerate}
\item $S$ is a surface with nonempty boundary, 
\item $P$ is a subsurface of $S$,  
\item $h:P\to S$ is an embedding which restricts to the identity on $\partial P\cap \partial S$, 
\item $\mathbf{c}=\{c_1,\dots,c_n\}$ is a set of disjoint, properly embedded arcs in $P$ such that $S\ssm \mathbf{c}$ deformation retracts onto $S\ssm P$.
\end{enumerate}
\end{definition}

\begin{remark}
The collection $\mathbf{c}$ of \emph{basis arcs} for $P$ is not typically recorded in the data of a partial open book. Usually, it is just required that $S$ be obtained from $\overline{S\ssm P}$ by successive  $1$-handle attachments. The basis arcs specify a $1$-handle decomposition of $P$. Given that we are specifying basis arcs, we  do not technically need to record the subsurface $P$. We do so anyhow to emphasize the equivalence between Definition \ref{def:POB} and the more commonplace definition of partial open book found in \cite{etguozbagcirelative,hkm4}.
\end{remark}

Suppose $(S,P,h,\mathbf{c})$ is a partial open book. Consider the $[-1,1]$-invariant contact structure $\xi_S$ on $S\times[-1,1]$ for which each $S\times\{t\}$ is convex with collared Legendrian boundary and the dividing set on $S\times\{1\}$ consists of $k$ boundary parallel arcs, one for each component of $\partial S$, oriented in the same direction as the boundary, as shown in Figure \ref{fig:productsurfacecorners}. Let $H(S)$ be the \emph{product sutured contact manifold}  obtained from $(S\times[-1,1],\xi_{S})$ by rounding corners. 

\begin{remark}Note that $H(S)$ is precisely the sort of contact handlebody that appears in the  Heegaard splitting associated to an open book with page $S$. 
\end{remark}

Let $\gamma_i$ be the curve on $\partial H(S)$ corresponding to  \begin{equation}\label{eqn:basishandle}(c_i\times\{1\})\cup (\partial c_i\times [-1,1])\cup (h(c_i)\times\{-1\})\,\subset\, \partial (S\times[-1,1]).\end{equation}  Let $M(S,P,h,\mathbf{c})$ be the sutured contact manifold obtained from $H(S)$ by attaching contact $2$-handles along the curves in \begin{equation}\label{eqn:gammac}\boldsymbol{\gamma}(h,\mathbf{c}):=\{\gamma_1,\dots,\gamma_n\}.\end{equation} We will  use $H(S)$ and $M(S,P,h,\mathbf{c})$ to refer both to these sutured contact manifolds and to the sutured manifolds underlying them.

\begin{figure}[ht]
\centering
\includegraphics[width=11cm]{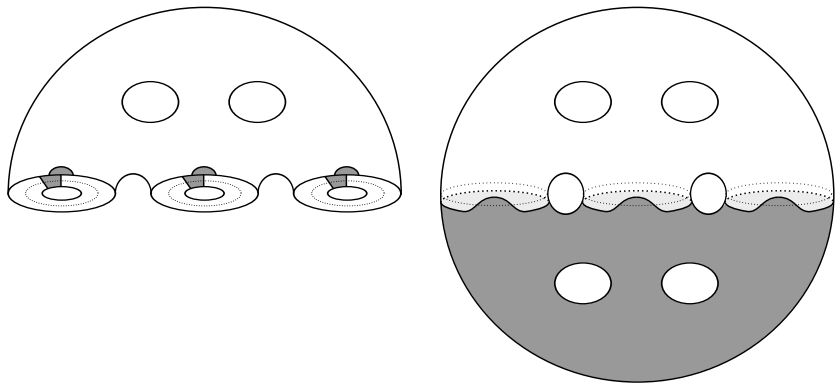}
\caption{Left, $(S\times[-1,1],\xi_{S})$,  with  negative region shaded, for a genus $2$ surface with $3$ boundary components. Right, the convex boundary of the sutured contact handlebody $H(S)$ obtained by rounding corners.}
\label{fig:productsurfacecorners}
\end{figure}


\begin{definition}
A \emph{partial open book decomposition} of $(M,\Gamma,\xi)$ is a partial open book $(S,P,h,\mathbf{c})$ together with a contactomorphism \[f:M(S,P,h,\mathbf{c})\to (M,\Gamma,\xi).\]
\end{definition}

The ``existence" part of the relative Giroux correspondence between partial open books and sutured contact manifolds, proven by Honda, Kazez, and Mati{\'c} in \cite{hkm4}, says the following.

\begin{theorem}
\label{thm:relativegiroux1}
Every sutured contact manifold admits a partial open book decomposition.
\end{theorem}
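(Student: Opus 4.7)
The plan is to adapt Giroux's construction of an open book from a contact cell decomposition to the relative setting with convex boundary. First I would thicken the collar of the convex boundary so that $\partial M$ is flanked by a parallel convex surface with the same dividing set $\Gamma$. Inside $M$, the goal is to find a Legendrian graph $K$, with endpoints possibly on $R_-(\Gamma)\subset\partial M$, whose standard contact neighborhood $N(K)$, together with the boundary collar, has complement in $M$ consisting of a disjoint union of tight $3$-balls with connected dividing curve on their convex boundary. Such a $K$ can be produced by starting from a smooth spine of $M$ relative to $R_-(\Gamma)$, viewed as the $1$-skeleton of a cell decomposition whose $2$-cells meet $\Gamma$ efficiently, and Legendrian-realizing it via Giroux's convex surface theory.

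Given such a $K$, I would identify $N(K)\cup N(\partial M)$ with $H(S)$ for an appropriate page $S$. The surface $S$ is built by gluing a disk at each vertex of $K$, a band at each edge of $K$, and the surface $R_-(\Gamma)$ itself; the canonical $I$-invariant contact structure on the standard neighborhood of a Legendrian graph matches the $I$-invariant structure on the boundary collar, producing the desired contactomorphism $N(K)\cup N(\partial M)\cong H(S)$.

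Each complementary tight $3$-ball is then glued back in along a convex disk which, by Eliashberg's classification of tight contact structures on $B^3$, we may arrange to have a single dividing arc, so each such ball is literally a contact $2$-handle attached along a Legendrian curve on $\partial H(S)$. By Legendrian realization I can isotope each attaching curve into the form $(c_i\times\{1\})\cup(\partial c_i\times[-1,1])\cup(h(c_i)\times\{-1\})$, reading off a pair of arcs $c_i$ and $h(c_i)$ on the top and bottom pages $S\times\{\pm 1\}$. Setting $\mathbf{c}=\{c_1,\dots,c_n\}$ and letting $P\subset S$ be a regular neighborhood of $\mathbf{c}$, the monodromy $h\colon P\to S$ is defined by its action on the basis arcs and equals the identity on $\partial P\cap\partial S$ because the vertical sides of the attaching curves live in the fixed collar. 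The deformation retraction condition $S\setminus\mathbf{c}\simeq S\setminus P$ from Definition \ref{def:POB} is automatic, since $S\setminus P$ is precisely the $R_-(\Gamma)$ piece of $S$ together with the vertex disks, which intersect $P$ only in the band cores.

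The main obstacle is the first step: producing a Legendrian graph $K$ adapted to the convex boundary whose complement decomposes into standard tight $3$-balls. This is a relative analogue of Giroux's contact cell decomposition theorem. I would execute it inductively, successively cutting $M$ along convex splitting disks with Legendrian boundary (obtained by Legendrian realization on convex surfaces already in place) until every complementary region is a standard tight $B^3$; termination of this process and arranging that the cutting disks interact correctly with $\Gamma$ and with $R_-(\Gamma)$ are the delicate points, although both can be handled by combining Giroux's Legendrian realization and flexibility theorems with Eliashberg's uniqueness of the tight structure on the $3$-ball.
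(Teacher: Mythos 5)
First, note that the paper does not prove Theorem \ref{thm:relativegiroux1} at all: it is quoted from Honda--Kazez--Mati{\'c} \cite{hkm4} (see also \cite{etguozbagcirelative}), so the relevant comparison is with their contact-cell-decomposition argument, which is indeed the strategy you are sketching. However, your central identification step contains a genuine error. You propose to realize $H(S)$ as $N(K)\cup N(\partial M)$, where $N(\partial M)\cong\partial M\times[0,1]$ is a collar of the \emph{entire} convex boundary. That union can never be contactomorphic to $H(S)$: each component of $H(S)$, being $S\times[-1,1]$ with corners rounded for a compact surface $S$ with nonempty boundary (Definition \ref{def:POB}), is a handlebody with connected boundary (the double of $S$), whereas the component of $N(K)\cup N(\partial M)$ containing the collar has the closed surface $\partial M\times\{0\}$ as one boundary component and an inner boundary as well, so it is a compression-body-type piece, not a handlebody. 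The correct construction takes the page to be (a pushed-in copy of) $R_{\pm}(\Gamma)$ together with the ribbon of a Legendrian graph $K$ whose endpoints lie on the dividing set $\Gamma$---not in the interior of $R_-(\Gamma)$ as you allow---precisely so that the ribbon attaches to $R_{\pm}(\Gamma)$ along $\Gamma$ and the union is a surface $S$ with an $I$-invariant neighborhood of the form $S\times[-1,1]$; equivalently, in the formulation of \cite{hkm4}, $N(K)$ plays the role of the thickened $P$ (the union of the contact $2$-handles), and the requirement is that its complement be product disk decomposable, hence of the form $H(S)$. With endpoints on $R_-(\Gamma)$ and a two-sided collar, the dividing-set bookkeeping that makes the complementary pieces attach along annuli meeting the suture of $H(S)$ twice (i.e.\ as contact $2$-handles with attaching curves of the form \eqref{eqn:basishandle}) also breaks down.

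Second, the step you yourself flag as the main obstacle---producing a Legendrian graph adapted to $(\partial M,\Gamma)$ whose complement is standard, i.e.\ the existence of a contact cell decomposition relative to the convex boundary, together with the verification that the complement is product disk decomposable and that the resulting $h$ is an embedding of $P$ restricting to the identity on $\partial P\cap\partial S$---is exactly the content of the proof in \cite{hkm4}, and your sketch of it (inductive convex splittings plus Eliashberg's uniqueness on $B^3$) is too thin to count as a proof. So the proposal identifies the right toolbox, but as written the key identification with $H(S)$ is false and the hard existence step is deferred rather than carried out; the paper itself simply cites \cite{hkm4} for this statement.
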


Below, we describe how  different partial open book decompositions of $(M,\Gamma,\xi)$ are related. Suppose $(S,P,h,\mathbf{c})$ and $(S',P',h',\mathbf{c}')$ are partial open books. Note that a diffeomorphism \begin{equation}\label{eqn:g}g:(S,P,\mathbf{c})\to(S',P',\mathbf{c}')\end{equation} which intertwines $h$ and $h'$   gives rise to a canonical isotopy class of  contactomorphisms \begin{equation}\label{eqn:tildeg}\tilde g:H(S)\to H(S')\end{equation} and therefore to a canonical isotopy class of contactomorphisms \begin{equation}\label{eqn:barg}\bar{\tilde g}:M(S,P,h,\mathbf{c})\to M(S',P',h',\mathbf{c}').\end{equation}



\begin{definition}
\label{def:isomorphic} We say that  $(S,P,h,\mathbf{c},f)$ and $(S',P',h',\mathbf{c}',f')$ are \emph{isomorphic} partial open book decompositions if there exists a diffeomorphism $g$ as in \eqref{eqn:g} such that $f=f'\circ\bar{\tilde g}$.
\end{definition}

\begin{definition}
\label{def:posstab}  A \emph{positive stabilization} of the partial open book  $(S,P,h,\mathbf{c})$ is a partial open book $(S',P',h',\mathbf{c'})$ such that: 
\begin{enumerate}
\item $S'$ is obtained by attaching a $1$-handle $H_0$ to $S$,
\item $P' = P\cup H_0$,
\item $h' = D_{\beta}\circ h,$ where $\beta$ is a curve on $S'$ meeting a cocore $c_0$ of $H_0$ exactly once, and $D_{\beta}$ denotes a positive Dehn twist along $\beta$,
\item $\mathbf{c}' = \mathbf{c}\cup \{c_0\}$.  
\end{enumerate}
\end{definition}

Suppose $(S',P',h',\mathbf{c'})$ is a positive stabilization of $(S,P,h,\mathbf{c})$ as in the definition above. Let $M(S',P',h',c_0)$ be the sutured contact manifold obtained from $H(S')$ by attaching a contact 2-handle along the curve $\gamma_0'\subset \partial H(S')$ obtained from $c_0$ as in \eqref{eqn:basishandle}. Note that $M(S',P',h',c_0)$ is obtained from $H(S)$ by attaching a Darboux ball in the form of cancelling contact $1$- and $2$-handles. In particular, there is a canonical isotopy class of contactomorphisms \begin{equation}\label{eqn:q}q:M(S',P',h',c_0)\to H(S)\end{equation} which restricts to the identity away from this Darboux ball and sends the curves $\gamma_1',\dots,\gamma_n'\subset \partial M(S',P',h',c_0)$ to $\gamma_1,\dots,\gamma_n\subset\partial H(S)$. Such a map gives rise to a canonical isotopy class of contactomorphisms \begin{equation}\label{eqn:tildeq}\bar q:M(S',P',h',\mathbf{c}')\to M(S,P,h,\mathbf{c}).\end{equation}

\begin{definition}
 A \emph{positive stabilization} of the partial open book decomposition $(S,P,h,\mathbf{c},f)$ is a partial open book decomposition $(S',P',h',\mathbf{c}',f' = f\circ\bar q),$ where $(S',P',h',\mathbf{c}')$ is a positive stabilization of $(S,P,h,\mathbf{c})$  and $\bar q$ is the contactomorphism  in \eqref{eqn:tildeq}.
\end{definition}

The ``uniqueness" part  of the relative Giroux correspondence says the following.

\begin{theorem}
\label{thm:relativegiroux2} Given two partial open book decompositions of the same sutured contact manifold,  it is possible to positively stabilize each some  number of times so that the resulting partial open book decompositions are isomorphic.
\end{theorem}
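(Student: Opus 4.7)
The plan is to follow Giroux's original strategy for the closed case, adapted to the sutured setting. The key auxiliary object is a \emph{contact cell decomposition} of $(M,\Gamma,\xi)$ relative to $\partial M$: a cell decomposition compatible with the dividing set $\Gamma$, whose Legendrian $1$-skeleton has a standard contact neighborhood recovering $H(S)$ and whose $2$-cells correspond to the contact $2$-handles attached along $\boldsymbol{\gamma}(h,\mathbf{c})$.

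First, I would establish a dictionary between isomorphism classes of partial open book decompositions and isotopy classes of contact cell decompositions of $(M,\Gamma,\xi)$. Given $(S,P,h,\mathbf{c},f)$, a spine of $\overline{S\ssm P}$ together with the basis arcs $\mathbf{c}$ specifies the Legendrian $1$-skeleton, and the contact $2$-handle attaching curves $\gamma_i$ specify the $2$-cells; conversely, $S$, $P$, $\mathbf{c}$, and the monodromy $h$ are all recoverable from such a cell decomposition, with $h$ read off by comparing the contact framing on the $1$-skeleton with the page framing.

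Second, I would show that any two contact cell decompositions of the same $(M,\Gamma,\xi)$ are connected by a finite sequence of elementary moves: ambient isotopies through contact cell decompositions, and insertions of a cancelling $1$-cell/$2$-cell pair along a Legendrian arc attached to the existing $1$-skeleton. This is the sutured analogue of Giroux's argument for the closed case and is proved using convex surface theory applied to a generic $1$-parameter family of decompositions connecting the two. Ambient isotopy moves give rise to isomorphisms of partial open books in the sense of Definition \ref{def:isomorphic}, while insertion moves correspond exactly to attaching a new $1$-handle $H_0$ to the page with its cocore as a new basis arc $c_0$ and composing the monodromy with a positive Dehn twist along a curve meeting $c_0$ once, i.e., to a positive stabilization as in Definition \ref{def:posstab}.

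The main obstacle I expect is sign tracking: verifying that every insertion move corresponds to a \emph{positive} Dehn twist rather than a negative one, so that only positive stabilizations appear. This reduces to comparing the contact framing of the newly attached Legendrian arc with the surface framing induced by the thickened page, and is the sutured analogue of the corresponding sign computation in Giroux's theorem. Once this is established, the theorem follows by concatenating the translations of the elementary moves into a sequence of positive stabilizations and isomorphisms relating any two given partial open book decompositions of $(M,\Gamma,\xi)$.
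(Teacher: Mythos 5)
Your overall skeleton matches the route the paper takes (see the remark following the theorem): pass from partial open book decompositions to contact cell decompositions, relate any two such cell decompositions of $(M,\Gamma,\xi)$, and translate the relating moves into positive stabilizations. The paper attributes the first translation to Etg\"u and \"Ozba\u{g}c{\i} and the crucial middle step to Honda, Kazez, and Mati\'c, whose statement is that two contact cell decompositions admit a \emph{common subdivision}, with subdivision corresponding to positive stabilization. Your dictionary between partial open books and contact cell decompositions, and your translation of a cell-level move into a positive stabilization (including the framing comparison that guarantees positivity), are consistent with that route.

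The gap is in your middle step, which is exactly where the hard content lives. You assert that any two contact cell decompositions are connected by ambient isotopies together with insertions of cancelling $1$-cell/$2$-cell pairs, ``proved using convex surface theory applied to a generic $1$-parameter family of decompositions.'' As stated (insertions only, no removals) this is too strong: it would say one decomposition is obtained from the other by stabilizations alone, which fails in general---this is precisely why the theorem stabilizes \emph{each} decomposition. If you instead allow insertions and removals, connectedness by such a zig-zag does not by itself produce a common stabilization; you would further need a confluence-type statement (any two refinements admit a common refinement), which is essentially the common subdivision theorem you are trying to bypass. Moreover, a ``generic $1$-parameter family of decompositions connecting the two'' presupposes a connectedness result that has not been set up, and the closed-case Giroux argument you invoke as a template was never written in complete detail; the sutured proof cited by the paper does not proceed through a $1$-parameter family at all but constructs a common subdivision directly. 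Until that middle step is supplied (or replaced by the common subdivision theorem of Honda--Kazez--Mati\'c), the proposal does not constitute a proof.
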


\begin{remark}As stated, Theorem \ref{thm:relativegiroux2} is a combination of the results in \cite{etguozbagcirelative,hkm4}.  Namely, Etg{\"u} and {\"O}zba{\u{g}}c{\i}'s work in \cite{etguozbagcirelative} implies that a partial open book decomposition of $(M,\Gamma,\xi)$ as defined above determines a contact cell decomposition of $(M,\Gamma,\xi)$. In \cite{hkm4}, Honda, Kazez, and Mati{\'c} prove that  two contact cell decompositions of $(M,\Gamma,\xi)$ admit a common subdivision, and subdividing in their sense corresponds to positive stabilization  as defined above.
\end{remark}

\section{Contact handle attachment maps}
\label{sec:handlemaps}

In this section, we define the contact handle attachment maps in sutured instanton homology mentioned in the introduction. Our construction of these maps is nearly identical to that of the corresponding maps in sutured  monopole homology  \cite{bsSHM}, except that we make comparatively little  reference to contact geometry here. 

\subsection{0-handles}
\label{ssec:0handles}
Attaching a contact $0$-handle to $(M,\Gamma)$ is equivalent to taking the disjoint union of $(M,\Gamma)$ with the  Darboux ball $(B^3,S^1, \xi_{std})$. Let $(M_0,\Gamma_0)$ be this disjoint union. It is not hard to construct a marked odd  closure of $(M_0,\Gamma_0)$ which is also a marked odd closure of $(M,\Gamma)$. We may therefore define the  $0$-handle attachment map to be the ``identity" map. 

Indeed, suppose $M_0'$ is a preclosure of $(M_0,\Gamma_0)$ formed from an auxiliary surface $F_0$. Then there are  natural identifications \[\partial_{\pm}M_0'=R_{\pm}(\Gamma)\cup F_0\cup R_{\pm}(S^1).\]   Let $R$ be a copy of $\partial_+M_0'$. Let $Y_0$ be the closed 3-manifold obtained by gluing $R\times[-1,1]$ to $M_0'$ by the ``identity" map from $R\times\{-1\}$ to $\partial_+M_0'$ and by a map from $R\times\{+1\}$ to $\partial_-M_0'$ which sends a point \[p\in F_0\subset R\,\,\,\,\,\,\text{ to }\,\,\,\,\,\,p\in F_0\subset  \partial_-M_0'.\] Let $\eta\subset R$ be an oriented, nonseparating curve contained in $F_0\subset R$ and let $\alpha\subset Y_0$ be the union of the oriented arcs \[\{p\}\times[-1,1]\subset F_0\times[-1,1]\subset M_0'\,\,\,\,\,\,\text{ and }\,\,\,\,\,\,\{p\}\times[-1,1]\subset F_0\times[-1,1]\subset R\times[-1,1].\]  Then \[\data_0 = (Y_0, R, r,m_0,\eta,\alpha)\] is a marked odd closure of $(M_0,\Gamma_0)$, where $r$ and $m_0$ are the obvious embeddings of $R\times[-1,1]$ and $M_0$ into $Y_0$. 

Note that $M'_0$ is also  a  preclosure of $(M,\Gamma)$ in a natural way, formed using  the auxiliary surface $F=F_0\cup R_+(S^1)$. It is then clear that \[\data = (Y_0, R, r,m,\eta,\alpha)\] is a marked odd closure of $(M,\Gamma)$, where $m$ is the restriction of $m_0$ to $M\subset M_0$. In particular, $\SHIt(-\data) = \SHIt(-\data_0)$. This leads to the following definition.

\begin{definition} 
\label{def:0handle}We define the $0$-handle attachment map \[\mathscr{H}_0:\SHItfun(-M,-\Gamma)\to \SHItfun(-M_0,-\Gamma_0)\] to be the morphism determined by the identity map  \[id_{-\data,-\data_0}:\SHIt(-\data)\to\SHIt(-\data_0).\] 

\end{definition}
To prove that $\mathscr{H}_0$ is independent of the choices made in its construction, we need  to show that if $\data_0,\data_0'$ are marked odd closures of $(M_0,\Gamma_0)$ constructed as above, and  $\data,\data'$ are the corresponding marked odd closures of $(M,\Gamma)$, then the  diagram   \[ \xymatrix@C=55pt@R=30pt{
\SHIt(-\data) \ar[r]^-{id_{-\data,-\data_0}} \ar[d]_{\Psit_{-\data,-\data'}} & \SHIt(-\data_0) \ar@<-1.5ex>[d]^{\Psit_{-\data_0,-\data_0'}} \\
\SHIt(-\data') \ar[r]_-{id_{-\data',-\data_0'}} & \SHIt(-\data_0')
} \] commutes, where $\Psit_{-\data,-\data'}$ and $\Psit_{-\data_0,-\data_0'}$ are the canonical isomorphisms relating the systems associated to different closures. But this follows from the fact that $\Psit_{-\data_0,-\data_0'}$ is a composition of  maps associated to 2-handle   and \emph{splicing} cobordisms, and $\Psit_{-\data,-\data'}$ can be defined via the exact same composition (refer to \cite{bs3} for the definition of these maps and \cite[Subsection 4.2]{bsSHM} for the same argument  in the  sutured monopole Floer context).

\subsection{1-handles}
\label{ssec:1handles}
Suppose $D_-$ and $D_+$ are disjoint embedded disks in $\partial M$ which each intersect $\Gamma$ in a single properly embedded arc. To attach a contact $1$-handle  to $(M,\Gamma)$ along these disks, we glue the   contact manifold $(D^2\times[-1,1],\xi_{D^2})$ to $(M,\Gamma)$ by diffeomorphisms \[D^2\times\{-1\}\to D_- \ \ {\rm and} \ \  D^2\times\{+1\}\to D_+,\] which preserve and reverse orientations, respectively, and  identify the dividing sets with the sutures, and then we round corners, as illustrated in Figure \ref{fig:onehandle}. Let $(M_1,\Gamma_1)$ be the resulting sutured  manifold. As in the $0$-handle case, it is not hard to construct a marked odd closure of $(M_1,\Gamma_1)$ which is also a marked odd closure  of $(M,\Gamma)$, so that we may define the contact $1$-handle attachment map to be the ``identity" map in this case as well. 

\begin{figure}[ht]

\centering
\includegraphics[width=14cm]{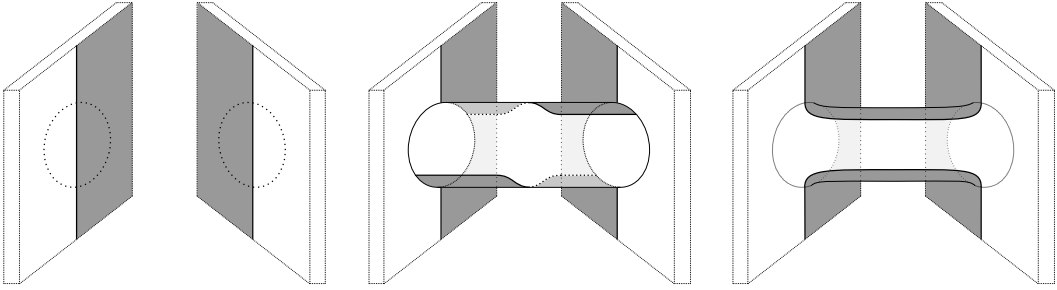}
\caption{Left, a collar neighborhood of a subsurface of  $\partial M$ containing the disks $D_-,D_+\subset \partial M$, whose boundaries are dotted. Middle, attaching the contact $1$-handle.  Right, the $1$-handle attachment after rounding corners.}
\label{fig:onehandle}
\end{figure}

Indeed, suppose $M_1'$ is a preclosure of $(M_1,\Gamma_1)$ formed from an auxiliary surface $F_1$. Then there are  natural identifications \[\partial_{\pm}M_1'=R_{\pm}(\Gamma_1)\cup F_0.\]  Let $R$ be a copy of $\partial_+M_1'$. Let $Y_1$ be the closed 3-manifold obtained by gluing $R\times[-1,1]$ to $M_1'$ by the ``identity" map from $R\times\{-1\}$ to $\partial_+M_1'$ and by a map from $R\times\{+1\}$ to $\partial_-M_1'$ which sends a point \[p\in F_1\subset R\,\,\,\,\,\,\text{ to }\,\,\,\,\,\,p\in F_1\subset  \partial_-M_1'.\] Let $\eta\subset R$ be an oriented, nonseparating curve contained in $F_1\subset R$ and let $\alpha\subset Y_1$ be the union of the oriented arcs \[\{p\}\times[-1,1]\subset F_1\times[-1,1]\subset M_1'\,\,\,\,\,\,\text{ and }\,\,\,\,\,\,\{p\}\times[-1,1]\subset F_1\times[-1,1]\subset R\times[-1,1].\]  Then \[\data_1 = (Y_1, R, r,m_1,\eta,\alpha)\] is a marked odd closure of $(M_1,\Gamma_1)$, where $r$ and $m_1$ are the obvious embeddings of $R\times[-1,1]$ and $M_1$ into $Y_1$.

In complete analogy with the $0$-handle case,  we note that $M'_1$ is also  a  preclosure of $(M,\Gamma)$, the point being that the union of $F_1\times[-1,1]$ with the contact $1$-handle is a product $F\times[-1,1]$, where $F$ is an auxiliary surface for $(M,\Gamma)$. It follows that \[\data = (Y_1, R, r,m,\eta,\alpha)\] is a marked odd closure of $(M,\Gamma)$, where $m$ is the restriction of $m_1$ to $M\subset M_1$. In particular, $\SHIt(-\data) = \SHIt(-\data_1)$. This leads to the following definition.

\begin{definition} 
\label{def:1handle}We define the $1$-handle attachment map \[\mathscr{H}_1:\SHItfun(-M,-\Gamma)\to \SHItfun(-M_1,-\Gamma_1)\] to be the morphism determined by the identity map  \[id_{-\data,-\data_1}:\SHIt(-\data)\to\SHIt(-\data_1).\] 
\end{definition}

The same reasoning as in the $0$-handle case shows that the map $\mathscr{H}_1$ is independent of the choices made in its construction.

\subsection{2-handles}
\label{ssec:2handles}
In this subsection, we define the map associated to contact 2-handle attachment. Along the way, we define a map associated to surgery on a framed knot in a sutured manifold. 

Suppose $\gamma$ is an embedded curve in $\partial M$ which intersects $\Gamma$ in two points. Let $A(\gamma)$ be an annular neighborhood of $\gamma$ intersecting $\Gamma$ in two cocores. To attach a contact $2$-handle  to $(M,\Gamma,\xi)$ along $\gamma$, we glue $(D^2\times[-1,1],\xi_{D^2})$ to $(M,\Gamma,\xi)$ by an orientation-reversing diffeomorphism \[\partial D^2\times [-1,1] \to A(\gamma)\] which identifies positive regions with negative regions, and then  round corners, as illustrated in Figure \ref{fig:twohandle2}. Let $(M_2,\Gamma_2)$ be the resulting sutured  manifold. We will show that there exists  a marked odd closure of $(M_2,\Gamma_2)$ which is obtained from  a marked odd closure of $(M,\Gamma)$ via integer surgery, and will accordingly define the $2$-handle attachment map to be the map induced by the 4-dimensional 2-handle cobordism corresponding to this surgery, roughly speaking.

\begin{figure}[ht]

\centering
\includegraphics[width=11.3cm]{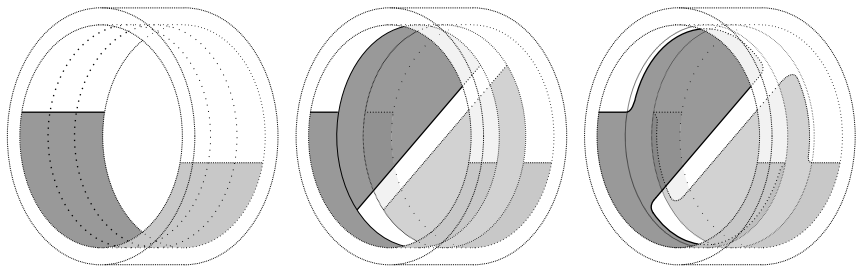}
\caption{Left, a collar neighborhood $N$ of a subsurface of $\partial M$ containing $A(\gamma)\subset\partial M$, whose boundary is dotted.  Middle,  attaching the contact $2$-handle. Right, the $2$-handle attachment after rounding corners.}
\label{fig:twohandle2}
\end{figure}

We construct the aforementioned closure of $(M_2,\Gamma_2)$ in a slightly roundabout way. Let us first consider the sutured manifold $(M_{1},\Gamma_{1})$  obtained from $(M_2,\Gamma_2)$ by attaching a contact $1$-handle along disks in the interiors of the $D^2\times\{\pm 1\}$ boundary components of the contact $2$-handle, as indicated in Figure \ref{fig:twohandle}.  Let \[\mathscr{H}_1:\SHMtfun(-M_2,-\Gamma_2)\to \SHMtfun(-M_1,-\Gamma_1)\] be the corresponding $1$-handle attachment map, as defined in Subsection \ref{ssec:1handles}. It is not hard to see that $(M_{1},\Gamma_{1})$ is diffeomorphic to the sutured manifold obtained from $(M,\Gamma)$ by performing $\partial M$-framed surgery on a parallel copy $\gamma'$ of $\gamma$ in the interior of $M$. 

\begin{figure}[ht]
\centering
\includegraphics[height=5.2cm]{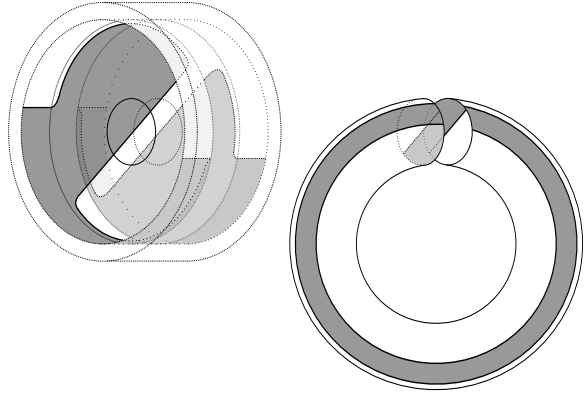}
\caption{Attaching a contact $1$-handle  to form $M_{1}$. The circles on $D^2\times\{\pm 1\}$ indicate   where the feet of this handle are to be attached. The union of the $1$-handle  below with the portion of $M_1$ shown above  is the solid torus $N_{1}$.}
\label{fig:twohandle}
\end{figure}

To be precise, let us suppose that $\gamma'$ is contained in the solid torus neighborhood $N\subset M$ shown in Figure \ref{fig:twohandle2}. Let $N_{1}\subset M_{1}$ be the solid torus obtained from $N$ by attaching the $1$- and $2$-handles  as indicated in Figures \ref{fig:twohandle2} and \ref{fig:twohandle}.  
Note that  \begin{equation}\label{eqn:NM'}(M\ssm N,\Gamma|_{M\ssm N})=(M_{1}\ssm N_{1},\Gamma_{1}|_{M_{1}\ssm N_{1}}).\end{equation} Furthermore, the restriction of the identity map on these complements  to $\partial M\ssm N  = \partial M_1\ssm N_1$ extends uniquely, up to isotopy, to a diffeomorphism   of pairs \[(\partial M, \Gamma)\to(\partial M_{1}, \Gamma_{1}).\] The identity map on the complement in \eqref{eqn:NM'} therefore extends naturally to a diffeomorphism \[(M\ssm N',\Gamma)\to(M_{1}\ssm N_{1}',\Gamma_{1}),\] where $N'\subset \inr(N)$ and $N_1'\subset \inr(N_1)$ are  slightly smaller solid tori. This provides a canonical, up to isotopy, diffeomorphism \begin{equation}\label{eq:f}f:(M',\Gamma')\to(M_{1},\Gamma_{1}),\end{equation} where $(M',\Gamma')$ is the sutured manifold obtained from $(M,\Gamma)$ via $\partial M$-framed surgery on $\gamma'$.

In order to define the contact $2$-handle map, we first define a morphism associated to this surgery. In fact, we take this opportunity to define a map associated to  surgery on any framed knot in the interior of a sutured manifold.

Suppose $\data = (Y,R,r,m,\eta,\alpha)$ is a marked odd closure of $(M,\Gamma)$. Suppose $K$ is a framed knot in the interior of $M$, and let $(M',\Gamma')$ be the sutured manifold obtained via surgery on $K$ with respect to this framing. Let $Y'$ be the  3-manifold obtained from $Y$ by performing surgery on  $m(K)$ with respect to the induced framing. Then  $\data' = (Y',R,r',m',\eta,\alpha)$ is a marked odd closure of $(M',\Gamma')$, where  $r'$ is the map induced by $r$ and $m'$ is the embedding of $M'$ into $Y'$ induced by $m$. Let $W$ be the $2$-handle cobordism from $Y$ to $Y'$  obtained from $Y\times[0,1]$ by attaching the 2-handle corresponding to the above surgery and let $\nu\subset W$ be the obvious cylindrical cobordism from \[(r(\eta\times\{0\})\sqcup \alpha)\subset Y\,\,\,\,\,\text{ to }\,\,\,\,\,(r'(\eta\times\{0\})\sqcup \alpha)\subset Y'.\] We define 
 \[F_{K}: \SHItfun(-M,-\Gamma)\to\SHItfun(-M',-\Gamma')\] 
  to be the morphism induced by the map 
\[I_*({-}W|{-}R)_{-\nu}:\SHIt(-\data)\to\SHIt(-\data').\] To prove that $F_{K}$ is well-defined, we must show that the  diagram 
\[ \xymatrix@C=41pt@R=32pt{
\SHIt(-\data_1) \ar[rr]^-{I_*({-}W_1|{-}R_1)_{-\nu_1}} \ar[d]_{\Psit_{-\data_{1},-\data_{2}}} && \SHIt(-\data_1') \ar[d]^{\Psit_{-\data_1',-\data_2'}} \\
\SHIt(-\data_{2}) \ar[rr]_-{I_*({-}W_2|{-}R_2)_{-\nu_2}} &&  \SHIt(-\data_2')
} \]
commutes, for any two marked odd closures $\data_1,\data_2$ of $(M,\Gamma)$, where  $\data_1',\data_2'$ are the induced marked odd closures of $(M',\Gamma')$. As explained in \cite[Subsection 4.2]{bsSHM} in the context of sutured monopole  homology, this diagram commutes because the cobordisms used to define these maps commute: $W_1$ and $W_2$ are built by attaching $2$-handles along curves in the regions $m_1(M)$ and $m_2(M)$, while the vertical isomorphisms are defined from cobordisms built by attaching 2-handles or splicing along tori outside of these regions. 

Let us now return to the situation at hand, where $(M_2,\Gamma_2)$ is obtained from $(M,\Gamma)$ by attaching a contact 2-handle along $\gamma$, and $f$ is the diffeomorphism in (\ref{eq:f}).

\begin{definition} 
\label{def:2handle}We define the $2$-handle attachment map \[\mathscr{H}_2:\SHItfun(-M,-\Gamma)\to \SHItfun(-M_2,-\Gamma_2)\] to be the composition $\mathscr{H}_2 = \mathscr{H}_1^{-1}\circ \SHItfun(f)\circ F_{\gamma'}.$
\end{definition}

That $\mathscr{H}_2$ is independent of $\gamma'$  follows from the fact that any two such parallel copies of $\gamma$ are related by an ambient isotopy of $M$ supported in $N$.  

\begin{remark}Unpacking the composition above, we see that $\mathscr{H}_2$ may  also be formulated  as follows. Suppose $\data=(Y,R,r,m,\eta,\alpha)$ is a marked odd closure of $(M,\Gamma)$ and let $\data'$ be the induced marked odd closure of the surgered manifold $(M',\Gamma')$ as above. Then \[\data_{2}=(Y',R,r',m_{2},\eta,\alpha)\] is a marked odd closure of $(M_{2},\Gamma_{2})$, where $m_{2}$ is the restriction of $m'\circ f^{-1}$ to $M_2\subset M_{1}$. Let \[id_{-\data',-\data_{2}}: \SHIt(-\data')\to\SHIt(-\data_{2})\] be the identity map on $\SHIt(-\data')=\SHIt(-\data_{2})$. Then $\mathscr{H}_2$ is the morphism induced by the map 
\[id_{-\data',-\data_{2}}\circ I_*({-}W|{-}R)_{-\nu}:\SHIt(-\data)\to \SHIt(-\data_2).\] In other words, the $2$-handle map is really just the map of systems induced by the cobordism map corresponding to  surgery along the curve of attachment.
\end{remark}

\subsection{3-handles}
\label{ssec:3handles}
Attaching a contact 3-handle to $(M,\Gamma)$ amounts to gluing the Darboux ball $(B^3,S^1,\xi_{std})$ to $(M,\Gamma)$ along an $S^2$ boundary component of $M$ with one suture, identifying positive regions with negative regions, and vice versa. Let $(M_3,\Gamma_3)$ be the result of this gluing. We will  assume that $\partial M$ is disconnected, so that $M_3$ has boundary. Let $p$ be a point in $M_3$ in the interior of the Darboux ball we glued in. Then there is a canonical isotopy class of diffeomorphisms \[f:(M,\Gamma)\to (M',\Gamma'),\] where $(M',\Gamma')$ is the sutured  manifold obtained by taking the  connected sum of $(M_3,\Gamma_3)$ with $(B^3,S^1)$ at the point $p$. Let $(M_{0},\Gamma_{0})$ be the disjoint union of $(M_3,\Gamma_3)$ with $(B^3,S^1),$ and let \[\mathscr{H}_0: \SHItfun(-M_3, -\Gamma_3)\to \SHItfun(-M_{0},-\Gamma_{0})\] be the corresponding  $0$-handle attachment map, as defined in Subsection \ref{ssec:0handles}. Suppose \[\data_{0} = (Y_{0},R,r,m,\eta,\alpha)\] is a marked odd closure of $(M_{0},\Gamma_{0})$. Then \[\data' = (Y',R,r,m',\eta,\alpha)\] is a marked odd closure of $(M',\Gamma')$, where $Y'$ is the self  connected sum obtained from $Y_{0}$ by removing Darboux balls around $m(p)$ and some point in $m(B^3)\subset Y_{0}$ and gluing in $S^2\times I$, and $m'$ is the  embedding of $M'$ into $Y'$ induced by $m$. In particular, $Y'$ is a  connected sum of $Y_{0}$ with  $S^1\times S^2$.
Let $W$ be the  natural  $1$-handle cobordism from $Y_0$ to $Y'$, and let $\nu\subset W$ be  the natural  cylindrical cobordism from \[(r(\eta\times\{0\})\sqcup \alpha)\subset Y_0\,\,\,\,\,\text{ to }\,\,\,\,\,(r(\eta\times\{0\})\sqcup \alpha)\subset Y'.\] Let \[F_{\#}:\SHItfun(-M',-\Gamma')\to \SHItfun(-M_0,-\Gamma_0)\] be the morphism determined by the map \[I_*(W|{-}R)_{\nu}: \SHIt(-\data')\to\SHIt(-\data_{0}).\]

\begin{definition} 
\label{def:3handle}We define the $3$-handle attachment map \[\mathscr{H}_3:\SHItfun(-M,-\Gamma)\to \SHItfun(-M_3,-\Gamma_3)\] to be  the composition  $\mathscr{H}_3 = \mathscr{H}_0^{-1}\circ F_{\#}\circ\SHItfun(f).$
\end{definition}

To show that this map is well-defined, we only need to argue that $F_{\#}$ is well-defined. But this follows from same sort of reasoning as was used to argue that $F_K$ is well-defined: namely,  the  1-handle cobordism used to define $F_{\#}$ is formed via $1$-handle attachment along balls in the interiors of $Y'$ and $Y_0$ and therefore commutes with the 2-handle and splicing cobordisms used to define the canonical isomorphisms in the systems $\SHItfun(-M',-\Gamma')$ and $\SHItfun(-M_0,-\Gamma_0)$. 




\subsection{A further property}
Below, we prove a  lemma which will be useful for defining the contact invariant in Section \ref{sec:instantoninvt}. Suppose \[f:(M,\Gamma)\to (M',\Gamma')\] is a  diffeomorphism and $(M_i,\Gamma_i)$ is obtained from $(M,\Gamma)$ by attaching a contact $i$-handle along an attaching region $S\subset \partial M$. Note that $f$ extends uniquely, up to isotopy, to a sutured diffeomorphism \[\bar f:(M_i,\Gamma_i)\to(M'_i,\Gamma_i'),\] where $(M_i',\Gamma_i')$ is obtained from $(M',\Gamma')$ by attaching a contact $i$-handle along the attaching region $f(S)\subset \partial M'$.  
Then we have the following.

\begin{lemma}
\label{lem:nathandle}
The diagram 
\[ \xymatrix@C=30pt@R=35pt{
\SHItfun(-M,-\Gamma)   \ar[r]^-{ \mathscr{H}_i} \ar[d]_{\SHItfun(f)} &\SHItfun(-M_i,-\Gamma_i) \ar[d]^{\SHItfun(\bar f)}\\
\SHItfun(-M',-\Gamma') \ar[r]_-{\mathscr{H}'_i}  & \SHItfun(-M'_i,-\Gamma'_i) }\] 
 commutes, where $\mathscr{H}_i$ and $\mathscr{H}_i'$ are the appropriate contact $i$-handle attachment maps.
\end{lemma}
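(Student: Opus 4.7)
The plan is to verify commutativity separately for each $i \in \{0,1,2,3\}$, choosing in each case a marked odd closure of $(M'_i,\Gamma'_i)$ adapted to the handle-attachment construction of Section \ref{sec:handlemaps}, and tracing both composites in the square down to explicit maps on the associated instanton Floer group. Throughout, the guiding observation is that $\bar f$ is, by construction, the extension of $f$ that carries the attaching data on the $(M,\Gamma)$-side to the attaching data on the $(M',\Gamma')$-side.

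For $i = 0$ and $i = 1$, start from a marked odd closure $\data'_i = (Y,R,r,m'_i,\eta,\alpha)$ of $(M'_i,\Gamma'_i)$ of the special form described in Definition \ref{def:0handle} or \ref{def:1handle}. Such a closure restricts to a closure $\data' = (Y,R,r,m'_i|_{M'},\eta,\alpha)$ of $(M',\Gamma')$. Since $\bar f$ extends $f$ across the attached handle essentially by the identity on the attaching region, the pullback $(\data'_i)_{\bar f} = (Y,R,r,m'_i\circ\bar f,\eta,\alpha)$ is again of this special form and restricts to the closure $\data'_f = (Y,R,r,m'_i|_{M'}\circ f,\eta,\alpha)$ of $(M,\Gamma)$. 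All four arrows of the square then act as the identity on the common underlying Floer group $\SHIt(-\data'_f) = \SHIt(-\data') = \SHIt(-(\data'_i)_{\bar f}) = \SHIt(-\data'_i)$, so commutativity is trivial.

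For $i = 2$, unpack Definition \ref{def:2handle} via the remark that follows it. Starting from a closure $\data' = (Y,R,r,m',\eta,\alpha)$ of $(M',\Gamma')$, the morphism $\mathscr{H}'_2$ is computed by picking a parallel copy $\gamma'_\natural \subset \inr(M')$ of the attaching curve $f(\gamma) \subset \partial M'$, performing $\partial M'$-framed surgery on $m'(\gamma'_\natural)$ inside $Y$ to produce a cobordism $(W,\nu)$ and an induced closure of $(M'_2,\Gamma'_2)$, and then composing the associated cobordism map with an identity map of closures. The pullback $\data'_f = (Y,R,r,m'\circ f,\eta,\alpha)$ of $(M,\Gamma)$ admits the parallel copy $\gamma_\natural := f^{-1}(\gamma'_\natural) \subset \inr(M)$ of $\gamma$; since $(m'\circ f)(\gamma_\natural) = m'(\gamma'_\natural)$ with matching framings (both inherited from $\partial M = \partial M'$ via $f$), the surgery cobordism underlying $\mathscr{H}_2$ applied to $\data'_f$ is precisely $(W,\nu)$. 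The subsequent identifications between the surgered closure and a closure of $(M_2,\Gamma_2)$ (resp.\ $(M'_2,\Gamma'_2)$) are intertwined by $\SHItfun(\bar f)$ because $\bar f$ matches the canonical $1$-handle identification of \eqref{eq:f} on either side. Hence both compositions in the square send $\SHIt(-\data'_f)$ to the same element via the same cobordism map.

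Case $i = 3$ follows the same template as $i = 2$, with the $1$-handle cobordism entering $F_\#$ in Definition \ref{def:3handle} in place of the surgery cobordism; the self-connected-sum is performed at points of the closure lying away from $m'(M')$, and $\bar f$ is isotopic to the identity on the added Darboux ball, so the relevant $1$-handle cobordism on the two sides is identified under pullback by $f$, and the maps $\mathscr{H}_0^{-1}$ and the diffeomorphism-induced identities assemble just as before. The main obstacle lies in cases $i = 2$ and $i = 3$, where the horizontal arrows are genuine cobordism maps rather than identities; verifying commutativity reduces to the naturality of the maps $I_\ast({-}W|{-}R)_{-\nu}$ on $\SHIt$ under diffeomorphisms intertwining the boundary identifications of the cobordism, which is built into their definition in \cite{km4} and is preserved under the canonical-up-to-unit nature of morphisms in $\RPSys[\C]$.
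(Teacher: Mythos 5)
Your proposal is correct and takes essentially the same route as the paper's much terser proof: both composites around the square are induced, on suitably chosen closures, by one and the same map on instanton Floer homology---the identity for $i=0,1$, the surgery $2$-handle cobordism for $i=2$, and the $1$-handle cobordism for $i=3$---and your case-by-case unravelling is exactly what the paper leaves implicit in the phrase ``unraveling definitions.'' (One cosmetic slip: in the $i=3$ case the connected-sum balls are taken around $m(p)$ and a point of $m(B^3)$, hence inside the image of the glued-up manifold rather than away from it, but this does not affect the argument.)
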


\begin{proof}
The composition $\SHItfun(\bar f)\circ \mathscr{H}_i$ is ultimately defined in terms of the map on instanton Floer homology induced by  a natural cobordism (the identity cobordism, a 1-handle cobordism, or a 2-handle cobordism) from a closure of $(-M,-\Gamma)$ to a closure of $(-M_i',-\Gamma_i')$. Unraveling definitions, it is clear that the composition $\mathscr{H}_i'\circ\SHItfun(f)$ is determined by the same cobordism map.
\end{proof}

\section{A contact invariant in sutured instanton homology}
\label{sec:instantoninvt}
In this section, we use the relative Giroux correspondence to define the contact invariant 
\[\theta(M,\Gamma,\xi)\in\SHItfun(-M,-\Gamma)\] outlined in the introduction. We then establish some basic properties of this invariant, such as the fact that it vanishes for overtwisted contact structures and is nonzero for the complement of a Darboux ball in a Stein fillable contact manifold.

\subsection{The contact invariant}
Suppose $(M,\Gamma,\xi)$ is a sutured contact manifold with partial open book decomposition $(S,P,h,\mathbf{c},f)$. Recall that $M(S,P,h,\mathbf{c})$ is obtained from $H(S)$ by attaching contact 2-handles along the curves in the set $\boldsymbol{\gamma}(h,\mathbf{c})$ defined in \eqref{eqn:gammac}. Let \[\mathscr{H}:\SHItfun(-H(S))\to\SHItfun(-M(S,P,h,\mathbf{c}))\] be corresponding composition of contact $2$-handle attachment morphisms. 

\begin{definition}
\label{def:contactinvtobinstanton}  We define \[\theta(S,P,h,\mathbf{c},f):=\SHItfun(f)(\mathscr{H}(\mathbf{1}))\in\SHItfun(-M,-\Gamma),\] where $\mathbf{1}$ is the generator of $\SHItfun(-H(S))\cong \C$.
\end{definition}

\begin{definition}
We define  \[\theta(M,\Gamma,\xi):=\theta(S,P,h,\mathbf{c},f)\in \SHItfun(-M,-\Gamma)\] for any partial open book decomposition $(S,P,h,\mathbf{c},f)$ of $(M,\Gamma,\xi)$.
\end{definition}

That the element $\theta(M,\Gamma,\xi)$ is well-defined is the content of  the following theorem.

\begin{theorem}
\label{thm:well-defined4} The element $\theta(S,P,h,\mathbf{c},f)$ is independent of the partial open book decomposition $(S,P,h,\mathbf{c},f)$ of $(M,\Gamma,\xi)$. 
\end{theorem}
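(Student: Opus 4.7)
The plan is to invoke Theorem \ref{thm:relativegiroux2}: any two partial open book decompositions of $(M,\Gamma,\xi)$ become isomorphic after some sequence of positive stabilizations. Thus the proof reduces to establishing two statements: (i) the invariant is preserved under isomorphism of partial open book decompositions, and (ii) the invariant is preserved under a single positive stabilization.

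Step (i) is the easier one. Given an isomorphism $g:(S,P,\mathbf{c})\to (S',P',\mathbf{c}')$ intertwining $h$ and $h'$, with induced contactomorphisms $\tilde g:H(S)\to H(S')$ and $\bar{\tilde g}:M(S,P,h,\mathbf{c})\to M(S',P',h',\mathbf{c}')$ satisfying $f=f'\circ \bar{\tilde g}$, I would apply Lemma \ref{lem:nathandle} iteratively to each contact $2$-handle in the composition $\mathscr{H}$, obtaining $\SHItfun(\bar{\tilde g})\circ \mathscr{H}=\mathscr{H}'\circ \SHItfun(\tilde g)$. Since $H(S)$ and $H(S')$ are product sutured manifolds, Proposition \ref{prop:productsuturedinstanton} gives $\SHItfun(-H(\cdot))\cong \C$; the diffeomorphism-induced isomorphism $\SHItfun(\tilde g)$ is a nonzero map between copies of $\C$ and so sends the generator $\mathbf{1}$ to $\mathbf{1}$ in the projectively transitive system. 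Functoriality of $\SHItfun$ applied to $f=f'\circ \bar{\tilde g}$ then gives the desired equality of contact invariants.

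For step (ii), let $(S',P',h',\mathbf{c}',f'=f\circ \bar q)$ be a positive stabilization of $(S,P,h,\mathbf{c},f)$ adding a $1$-handle $H_0$ with cocore $c_0$. Then $H(S')$ is obtained from $H(S)$ by attaching a contact $1$-handle $\tilde H_0$, and the canonical contactomorphism $q$ of \eqref{eqn:q} identifies $M(S',P',h',c_0)$ with $H(S)$. Under $q$, the curves $\gamma_1',\dots,\gamma_n'$ correspond to $\gamma_1,\dots,\gamma_n$, and $\bar q$ extends $q$ to the contactomorphism $M(S',P',h',\mathbf{c}')\to M(S,P,h,\mathbf{c})$ in \eqref{eqn:tildeq}. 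Writing $\mathscr{H}^{\tilde H_0}_1$ for the $1$-handle attachment map and $\mathscr{H}^{\gamma_0'}_2$ for the $2$-handle attachment map along $\gamma_0'$, Lemma \ref{lem:nathandle} applied iteratively to the remaining $2$-handles $\gamma_1',\dots,\gamma_n'$ yields $\SHItfun(\bar q)\circ \mathscr{H}'|_{\gamma_1',\dots,\gamma_n'} = \mathscr{H}\circ \SHItfun(q)$. Combined with functoriality of $\SHItfun$ on $f'=f\circ \bar q$, the proof reduces to the cancellation identity
\[\SHItfun(q)\circ \mathscr{H}^{\gamma_0'}_2\circ \mathscr{H}^{\tilde H_0}_1 = \mathrm{id}_{\SHItfun(-H(S))}.\]

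The main obstacle will be this cancellation identity. My plan is to choose a marked odd closure $\data$ of $H(S)$ of the product form constructed in the proof of Proposition \ref{prop:productsuturedinstanton}, and trace through the definitions. By Definition \ref{def:1handle}, $\mathscr{H}^{\tilde H_0}_1$ is realized by the identity map between $\SHIt(-\data)$ and $\SHIt$ of the naturally induced closure of $H(S')$ built by adjoining $\tilde H_0\times I$ to the preclosure. By the remark following Definition \ref{def:2handle}, $\mathscr{H}^{\gamma_0'}_2$ is induced by the $4$-dimensional $2$-handle cobordism attached along a parallel copy $\gamma_0''$ of $\gamma_0'$ pushed into the interior, with framing inherited from $\partial H(S')$. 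Because $\gamma_0'$ is geometrically dual to the cocore of $\tilde H_0$, the curve $\gamma_0''$ bounds an embedded disk in the interior of the closure, assembled from a pushoff of the cocore of $\tilde H_0$ together with a small cap in the gluing region, with framing matching the surgery framing. The resulting surgery is trivial, so the $2$-handle cobordism is diffeomorphic rel boundary to a standard $1$-$2$ handle cancellation cobordism, whose map on $\SHIt\cong \C$ is necessarily an isomorphism and hence equals the identity in the projectively transitive system. This yields the cancellation and completes the proof.
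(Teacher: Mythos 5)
Your overall scheme (reduce via Theorem \ref{thm:relativegiroux2} to invariance under isomorphism and under a single positive stabilization, handle the isomorphism case with Lemma \ref{lem:nathandle} and the rank-one fact from Proposition \ref{prop:productsuturedinstanton}, and reduce the stabilization case to showing that the handle maps coming from the new $1$--$2$ pair act invertibly on the generator) is the same as the paper's, and step (i) and the formal reduction in step (ii) are fine. The gap is in your proof of the ``cancellation identity.'' You claim that the pushoff $\gamma_0''$ of $\gamma_0$ bounds an embedded disk in the closure, built from the cocore of the contact $1$-handle plus a cap, so that the surgery is trivial and the $2$-handle cobordism is a product-like cancellation cobordism. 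This ignores the positive Dehn twist in the stabilized monodromy: $\gamma_0$ is $(c_0\times\{1\})\cup(\partial c_0\times[-1,1])\cup(D_{\beta}(c_0)\times\{-1\})$, not the belt sphere $(c_0\times\{1\})\cup(\partial c_0\times[-1,1])\cup(c_0\times\{-1\})$ of the $1$-handle. Projecting to $S'$, the loop $c_0\cup D_\beta(c_0)$ is freely homotopic to $\beta$, which is essential in $S'$ (it meets the cocore $c_0$ once, so it pairs nontrivially with $[c_0]\in H_1(S',\partial S')$); hence $\gamma_0''$ is homotopically essential in $H(S')$ and does not bound a disk there, nor in a typical closure, where it is isotopic to an essential curve lying on a parallel copy of the distinguished surface $R$. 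So the surgery defining $\mathscr{H}^{c_0}$ genuinely changes the closure (it is $(+1)$-surgery, with respect to the surface framing, on a curve in $r(R\times\{t\})$, i.e.\ it alters the gluing by a Dehn twist along a curve in $R$), and the cobordism is not a cancellation cobordism. (Even internally, ``bounds a disk with framing matching the surgery framing, hence the surgery is trivial'' is not right: surgery on a curve bounding a disk with the disk framing produces a connected sum with $S^1\times S^2$; also note that smooth $1$--$2$ cancellation in the sutured manifold, which gives the contactomorphism $q$, does not by itself say anything about the cobordism map in \eqref{eqn:Wmapiso}.)

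The missing idea is the one the paper uses in Proposition \ref{prop:invariancestab}: isotope $\gamma_0$ in $\partial H(S')$ to the curve $\beta'$ corresponding to $\beta\times\{1\}$, observing that the isotopy carries the $\partial H(S')$-framing to the $(\partial H(S')+1)$-framing; since $\beta'$ lies in the positive region, its image in a closure is isotopic to a curve $r(b\times\{t\})$ with the surface framing, so $W$ is exactly the kind of $(+1)$-surgery cobordism along a curve in $r(R\times\{t\})$ that defines the canonical isomorphisms between closures of the same genus in \cite[Section 9]{bs3}. That identification—not a handle-cancellation argument—is what shows the map in \eqref{eqn:Wmapiso} is an isomorphism, hence that $\mathscr{H}^{c_0}$ is nonzero, which is all that is needed since the relevant systems have rank one. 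As written, your argument would (incorrectly) apply just as well to a ``stabilization'' without the Dehn twist, where the conclusion fails, so the step as stated cannot be repaired without bringing in this framing/closure-change analysis.
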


The rest of this subsection is devoted to the proof of Theorem \ref{thm:well-defined4}. As a first step,   we have the following lemma.

\begin{lemma}
\label{lem:isomorphic}
If $(S,P,h,\mathbf{c},f)$ and $(S',P',h',\mathbf{c}',f')$ are isomorphic partial open book decompositions, then $\theta(S,P,h,\mathbf{c},f) = \theta(S',P',h',\mathbf{c}',f')$. 
\end{lemma}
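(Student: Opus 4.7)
The plan is to unravel Definition \ref{def:contactinvtobinstanton} on both sides and show that $\SHItfun(\bar{\tilde g})$ carries the element $\mathscr{H}(\mathbf{1})\in\SHItfun(-M(S,P,h,\mathbf{c}))$ to the element $\mathscr{H}'(\mathbf{1}')\in\SHItfun(-M(S',P',h',\mathbf{c}'))$, so that functoriality of $\SHItfun$ then finishes the argument. Concretely, by Definition \ref{def:isomorphic} we have $f=f'\circ\bar{\tilde g}$, hence by functoriality of $\SHItfun$ on $\DiffSut$,
\[
\SHItfun(f) = \SHItfun(f')\circ\SHItfun(\bar{\tilde g}).
\]
So once I show $\SHItfun(\bar{\tilde g})(\mathscr{H}(\mathbf{1}))=\mathscr{H}'(\mathbf{1}')$, applying $\SHItfun(f')$ to both sides will give $\theta(S,P,h,\mathbf{c},f)=\theta(S',P',h',\mathbf{c}',f')$.

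First I would observe that, because the underlying diffeomorphism $g:(S,P,\mathbf{c})\to(S',P',\mathbf{c}')$ intertwines $h$ and $h'$ and carries the basis arcs $\mathbf{c}$ to $\mathbf{c}'$, the induced contactomorphism $\tilde g:H(S)\to H(S')$ carries the curves $\boldsymbol{\gamma}(h,\mathbf{c})\subset\partial H(S)$ of \eqref{eqn:gammac} onto the curves $\boldsymbol{\gamma}(h',\mathbf{c}')\subset\partial H(S')$, and does so  compatibly with the $2$-handle attaching regions. Consequently, attaching the contact $2$-handles of $\boldsymbol{\gamma}(h,\mathbf{c})$ to $H(S)$ and then pushing forward by $\tilde g$ is the same, up to canonical isotopy, as first pushing forward by $\tilde g$ and then attaching the contact $2$-handles of $\boldsymbol{\gamma}(h',\mathbf{c}')$ to $H(S')$; the resulting diffeomorphism between the $2$-handle attachments is $\bar{\tilde g}$.

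Next, I would apply Lemma \ref{lem:nathandle} iteratively, once for each of the $n$ contact $2$-handles in $\boldsymbol{\gamma}(h,\mathbf{c})$. The key point is that at each stage, the pair (diffeomorphism, handle attachment) satisfies the naturality hypothesis of Lemma \ref{lem:nathandle}, because $\tilde g$ (and its successive extensions) carry the attaching curves of the $H(S)$ side to those on the $H(S')$ side. Composing the $n$ resulting commutative squares gives the identity of morphisms of projectively transitive systems
\[
\SHItfun(\bar{\tilde g})\circ \mathscr{H} \;=\; \mathscr{H}'\circ \SHItfun(\tilde g):\SHItfun(-H(S))\to\SHItfun(-M(S',P',h',\mathbf{c}')).
\]
Applying both sides to $\mathbf{1}$ yields $\SHItfun(\bar{\tilde g})(\mathscr{H}(\mathbf{1}))=\mathscr{H}'(\SHItfun(\tilde g)(\mathbf{1}))$.

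Finally, I would observe that $\SHItfun(\tilde g)$ is an isomorphism of projectively transitive systems between $\SHItfun(-H(S))\cong\C$ and $\SHItfun(-H(S'))\cong\C$ (both by Proposition \ref{prop:productsuturedinstanton}), so it carries the generator $\mathbf{1}$ to a nonzero element of $\SHItfun(-H(S'))$. Since any two generators of a projectively transitive system isomorphic to $\C$ agree as elements (they differ by a unit in $\C^\times$, hence determine the same equivalence class in every constituent module), we get $\SHItfun(\tilde g)(\mathbf{1})=\mathbf{1}'$ as elements. Combining everything gives $\SHItfun(\bar{\tilde g})(\mathscr{H}(\mathbf{1}))=\mathscr{H}'(\mathbf{1}')$, and then $\SHItfun(f)(\mathscr{H}(\mathbf{1}))=\SHItfun(f')(\mathscr{H}'(\mathbf{1}'))$, as desired. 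The main obstacle is the bookkeeping in the iterated application of Lemma \ref{lem:nathandle}; once one verifies that each extension of $\tilde g$ through successive $2$-handle attachments matches the attaching data on the two sides, the argument is essentially formal.
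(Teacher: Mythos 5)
Your proposal is correct and follows essentially the same route as the paper: both arguments combine the identity $\SHItfun(f)=\SHItfun(f')\circ\SHItfun(\bar{\tilde g})$ with the naturality of the contact $2$-handle maps from Lemma \ref{lem:nathandle} to obtain $\SHItfun(\bar{\tilde g})\circ\mathscr{H}=\mathscr{H}'\circ\SHItfun(\tilde g)$, and then use that the isomorphism $\SHItfun(\tilde g)$ sends the generator $\mathbf{1}$ to $\mathbf{1}'$. Your version merely spells out the iteration of Lemma \ref{lem:nathandle} over the individual handle attachments, which the paper leaves implicit.
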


\begin{proof}
We must show that \begin{equation}\label{eqn:isoeq}\SHItfun(f)(\mathscr{H}(\mathbf 1)) = \SHItfun(f')(\mathscr{H}'(\mathbf 1')),\end{equation} where $\mathscr{H}$ and $\mathscr{H'}$ are the compositions of contact 2-handle maps used to define $\theta(S,P,h,\mathbf{c},f)$ and $\theta(S',P',h',\mathbf{c}',f')$ and $\mathbf 1$ and $\mathbf 1'$ are the generators of $\SHItfun(-H(S))$ and $\SHItfun(-H(S'))$.

 Since these open book decompositions are isomorphic, there exist maps  $\tilde g$ and $\bar{\tilde g}$ as in (\ref{eqn:tildeg}) and (\ref{eqn:barg}) such that $f=f'\circ \bar{\tilde g}$. Note that we have a commutative diagram 
 \[ \xymatrix@C=40pt@R=35pt{
\SHItfun(-H(S))   \ar[r]^-{ \mathscr{H}} \ar[d]_{\SHItfun(\tilde g)} &\SHItfun(-M(S,P,h,\mathbf{c})) \ar[d]^{\SHItfun(\bar {\tilde g})} \ar[r]^-{ \SHItfun(f)}&\SHItfun(-M,-\Gamma)  \ar[d]_{id}\\
\SHItfun(-H(S')) \ar[r]_-{\mathscr{H}'}  & \SHItfun(-M(S',P',h',\mathbf{c}'))\ar[r]^-{ \SHItfun(f')}&\SHItfun(-M,-\Gamma)  }.\] The leftmost square commutes by Lemma \ref{lem:nathandle} and the rightmost square commutes since \[\SHItfun(f) = \SHItfun(f'\circ \bar{\tilde g}) = \SHItfun(f')\circ\SHItfun(\bar{\tilde g}).\] The equality in (\ref{eqn:isoeq}) then follows as long as $\SHItfun(\tilde g)$ sends $\mathbf 1$ to $\mathbf 1'$, but it does since this map is an isomorphism and $\mathbf 1$ and $\mathbf 1'$ are the generators.
\end{proof}

Since isomorphic partial open book decompositions give rise to the same contact element, by Lemma \ref{lem:isomorphic}, it suffices, for the proof of Theorem \ref{thm:well-defined4}, to establish the following.

\begin{proposition}
\label{prop:invariancestab}
If the partial open book decomposition $(S',P',h',\mathbf{c}',f')$ is a positive stabilization of $(S,P,h,\mathbf{c},f)$, then $\theta(S',P',h',\mathbf{c}',f') = \theta(S,P,h,\mathbf{c},f).$
\end{proposition}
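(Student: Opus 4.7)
The plan is to unravel the positive stabilization, apply the functoriality of $\SHItfun$ together with the naturality of the contact handle attachment maps (Lemma \ref{lem:nathandle}), and reduce the proposition to the identification of a single 2-handle cobordism map with the canonical transition isomorphism of the projectively transitive system.

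First, since $f'=f\circ\bar q$ with $\bar q$ the contactomorphism from (\ref{eqn:tildeq}), functoriality yields
\[\theta(S',P',h',\mathbf{c}',f')=\SHItfun(f)\bigl(\SHItfun(\bar q)(\mathscr{H}'(\mathbf{1}'))\bigr),\]
so it suffices to prove $\SHItfun(\bar q)(\mathscr{H}'(\mathbf{1}'))=\mathscr{H}(\mathbf{1})$. The attaching curves $\gamma_0',\gamma_1',\ldots,\gamma_n'$ on $\partial H(S')$ are pairwise disjoint, so their 2-handle maps commute, and I factor $\mathscr{H}'=\mathscr{H}''\circ\mathscr{H}_{c_0}'$, where $\mathscr{H}_{c_0}'\colon\SHItfun(-H(S'))\to\SHItfun(-M(S',P',h',c_0))$ is the 2-handle map for $\gamma_0'$ alone and $\mathscr{H}''$ is the composition for the remaining $\gamma_i'$. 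By the construction of $q$ in (\ref{eqn:q}), $q$ carries each $\gamma_i'\subset\partial M(S',P',h',c_0)$ to $\gamma_i\subset\partial H(S)$, so $\bar q$ extends $q$ through these attachments. Iterated application of Lemma \ref{lem:nathandle} then gives $\SHItfun(\bar q)\circ\mathscr{H}''=\mathscr{H}\circ\SHItfun(q)$, reducing the proposition to the identity
\[\SHItfun(q)\bigl(\mathscr{H}_{c_0}'(\mathbf{1}')\bigr)=\mathbf{1}\in\SHItfun(-H(S)).\]

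Next, I introduce the contact 1-handle map $\mathscr{H}_1^{H_0}\colon\SHItfun(-H(S))\to\SHItfun(-H(S'))$ for the stabilization handle $H_0\subset S'=S\cup H_0$. Per Definition \ref{def:1handle}, $\mathscr{H}_1^{H_0}$ is realized by the identity on a marked odd closure $\data$ which serves simultaneously as a closure of $-H(S)$ and of $-H(S')$ (via absorbing $H_0$ into the auxiliary surface), whence $\mathscr{H}_1^{H_0}(\mathbf{1})=\mathbf{1}'$. Since $\mathscr{H}_1^{H_0}$ is an isomorphism of rank-one systems (Proposition \ref{prop:productsuturedinstanton}), the equation above is equivalent to
\[\SHItfun(q)\circ\mathscr{H}_{c_0}'\circ\mathscr{H}_1^{H_0}=\mathrm{id}_{\SHItfun(-H(S))}.\]
Unraveling, this composition is represented on $\SHIt(-\data)$ by the cobordism map $I_*(-W|-R)_{-\nu}\colon\SHIt(-\data)\to\SHIt(-\data'')$, where $W$ is the 4-dimensional 2-handle cobordism attaching a handle along a parallel copy $\gamma_0''$ of $\gamma_0'$ (with its $\partial$-framing) pushed into the interior of the embedded $H(S')$, and $\data''$ is the surgered closure, viewed as a marked odd closure of $-H(S)$ via $q$. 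The identity on $\SHItfun(-H(S))$ is represented on the pair of same-genus closures $\data,\data''$ by the canonical transition isomorphism $\Psit_{-\data,-\data''}\colon\SHIt(-\data)\to\SHIt(-\data'')$, which \cite{bs3} constructs from a specific 2-handle cobordism.

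The remaining task, and the main obstacle, is to identify $I_*(-W|-R)_{-\nu}$ with $\Psit_{-\data,-\data''}$. I plan to do this in the product closure $\data=R\times S^1$ of Proposition \ref{prop:productsuturedinstanton} with $R=S'\cup F'=S\cup F$. In that model, a direct homology computation identifies $\gamma_0''$ with a curve homologous to $\beta\times\{t_0\}$ via the 2-chain $c_0'\times[t_0,t_1]$, and shows that the surgered closure $\data''$ is again of product form; the three-dimensional cancellation of the stabilization 1-handle against the contact 2-handle, which is precisely the content of the contactomorphism $q$, should lift to a four-dimensional identification of $W$ with the 2-handle cobordism used in \cite{bs3} to build $\Psit_{-\data,-\data''}$. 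The delicate part of the argument is to track the attaching framing, the singular bundle data $\nu$ extending $\alpha\sqcup\eta$, and the distinguished surface $R$ through this cancellation, so that the two cobordism maps agree on the eigenspace $I_*(-\data|-R)_{\alpha\sqcup\eta}$ up to an element of $\C^\times$, as suffices inside the projectively transitive system.
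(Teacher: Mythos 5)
Your reduction is exactly the paper's: factor $\mathscr{H}'=\mathscr{H}^{c_{>0}}\circ\mathscr{H}^{c_0}$, use $f'=f\circ\bar q$ and Lemma \ref{lem:nathandle} to peel off the handles attached along $\gamma_1',\dots,\gamma_n'$, and reduce to a statement about the single map $\mathscr{H}^{c_0}$ associated to $\gamma_0$. But the heart of the matter --- why that statement holds --- is precisely where your proposal stops being a proof and becomes a plan, and the plan as stated has problems. First, you ask for more than is needed: since $\SHItfun(-H(S))$, $\SHItfun(-H(S'))$, and (via $q$) $\SHItfun(-M(S',P',h',c_0))$ are all rank-one systems, it suffices to show $\mathscr{H}^{c_0}\neq 0$; in the projective setting any nonzero map between rank-one systems carries generator to generator, so there is no need to identify the cobordism map with the canonical transition isomorphism $\Psit_{-\data,-\data''}$ on the nose (and no need for the detour through $\mathscr{H}_1^{H_0}$). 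Second, the step you flag as ``the remaining task, and the main obstacle'' is exactly the paper's key observation, and your substitute for it does not suffice: a homology computation showing $\gamma_0''$ is \emph{homologous} to $\beta\times\{t_0\}$ gives no control on the surgery. What is needed is that $\gamma_0$, after corner rounding, is \emph{isotopic} in $\partial H(S')$ to the curve $\beta'$ corresponding to $\beta\times\{1\}$, by an isotopy carrying the $\partial H(S')$-framing to the $(\partial H(S')+1)$-framing; since $\beta'$ lies in the positive region, in any closure its image is isotopic to a curve $r(b\times\{t\})\subset r(R\times\{t\})$ with the surface framing, so $W$ is literally a $(+1)$-surgery cobordism along a curve in a copy of $R$ --- i.e.\ one of the 2-handle cobordisms used in \cite[Section 9]{bs3} to define the canonical isomorphisms between same-genus closures, which are already known to induce isomorphisms. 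That single isotopy-with-framing statement closes the argument with no further bundle-data or eigenspace bookkeeping.

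Relatedly, your expectation that in the product closure ``the surgered closure $\data''$ is again of product form'' is not correct in general: $(+1)$-surgery on $r(b\times\{t\})$ changes the gluing by a Dehn twist along the essential curve $b$, so one obtains a mapping torus rather than $R\times S^1$. This does not hurt the correct argument (a twisted closure is still a closure, which is the whole point of the canonical isomorphisms), but it indicates that the explicit identification you propose to carry out in the product model would not go through as described. So the overall architecture matches the paper, but the crucial nonvanishing of $\mathscr{H}^{c_0}$ is left unproved, and the route you sketch for it (homology instead of framed isotopy, plus the product-form claim) would need to be replaced by the isotopy of $\gamma_0$ to $\beta\times\{1\}$ with its framing statement.
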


\begin{proof}
Suppose the partial open book decomposition $(S',P',h',\mathbf{c}',f')$ of $(M,\Gamma,\xi)$ is a positive stabilization of $(S,P,h,\mathbf{c},f)$. Let 
\begin{align*}
\mathscr{H}&:\SHItfun(-H(S))\to \SHItfun(-M(S,P,h,\mathbf{c}))\\
\mathscr{H}'&:\SHItfun(-H(S'))\to \SHItfun(-M(S',P',h',\mathbf{c}'))
\end{align*} be the compositions of contact 2-handle  maps used to define the elements $\theta(S,P,h,\mathbf{c},f)$ and $\theta(S',P',h',\mathbf{c}',f')$. To prove Proposition \ref{prop:invariancestab}, we must show that \begin{equation}\label{eqn:compeq}\SHItfun(f)(\mathscr{H}(\mathbf{1}))=\SHItfun(f')(\mathscr{H}'(\mathbf{1}')),\end{equation} where $\mathbf{1}$ and $\mathbf{1}'$ are  the generators of $\SHItfun(-H(S))$ and $\SHItfun(-H(S')).$ Let \[\mathscr{H}^{c_0}:\SHItfun(-H(S'))\to \SHItfun(-M(S',P',h',c_0))\] be the morphism associated to the $2$-handle attachment along $\gamma_0$ and let 
\[\mathscr{H}^{c_{>0}}:\SHItfun(-M(S',P',h',c_0))\to \SHItfun(-M(S',P',h',\mathbf c'))\] be the morphism associated to the composition of $2$-handle attachments along $\gamma_1',\dots,\gamma_n',$ so that \[\mathscr{H'} = \mathscr{H}^{c_{>0}}\circ  \mathscr{H}^{c_0}.\] Finally, let $q$ and $\bar q$ be the contactomorphisms in \eqref{eqn:q} and \eqref{eqn:tildeq}, so that $f'=f\circ \bar q$.
Then we have 
\[\SHItfun(f')\circ\mathscr{H}' = \SHItfun(f)\circ\SHItfun(\bar q)\circ \mathscr{H}^{c_{>0}}\circ  \mathscr{H}
=\SHItfun(f)\circ \mathscr{H}\circ\SHItfun(q)\circ  \mathscr{H}^{c_0},\] where the second equality is an application of  Lemma \ref{lem:nathandle}.  Thus, for \eqref{eqn:compeq}, it suffices to show that \[\SHItfun(q)(\mathscr{H}^{c_0}(\mathbf{1}'))=\mathbf{1},\] which is equivalent to proving that $\mathscr{H}^{c_0}$ is nonzero. 

By definition, the curve $\gamma_0\subset \partial H(S')$ is obtained from the curve \[(c_0\times\{1\})\cup (\partial c_0\times [-1,1])\cup (D_{\beta}(c_0)\times\{-1\})\,\subset\, \partial (S'\times[-1,1]),\]  shown in Figure \ref{fig:stab}, by rounding corners. Suppose  $\data = (Y,R,r,m,\eta,\alpha)$ is a marked odd closure of $H(S')$, let $\gamma_0'$ be a parallel copy of $\gamma_0$ in the interior of $Y$, and let  $Y'$ be the result of $0$-surgery on $m(\gamma_0')$ with respect to the framing  induced by $\partial H(S')$. By the construction of the contact 2-handle  map in the previous section, we know that there is an embedding \[m':M(S',P',h',c_0)\to Y'\] such that $\data'=(Y',R,r,m',\eta,\alpha)$ is a marked odd closure of $M(S',P',h',c_0)$. Let $W$ be the 2-handle cobordism from $Y$ to $Y'$ obtained from $Y\times[0,1]$ by attaching a $2$-handle corresponding to this surgery. Then $\mathscr{H}^{c_0}$ is the morphism determined by  the induced map
\begin{equation}\label{eqn:Wmapiso}I_*({-}W|{-}R)_{-\nu}:\SHIt(-\data)\to \SHIt(-\data'),\end{equation}
where $\nu\subset W$ is the obvious cylindrical cobordism from \[(r(\eta\times\{0\})\sqcup \alpha)\subset Y\,\,\,\,\,\text{ to }\,\,\,\,\,(r'(\eta\times\{0\})\sqcup \alpha)\subset Y'.\]
\begin{figure}[ht]
\labellist
\small
\pinlabel $\beta$ at 100 73
\pinlabel $c_0$ at 60 -6
\endlabellist
\centering
\includegraphics[width=12.5cm]{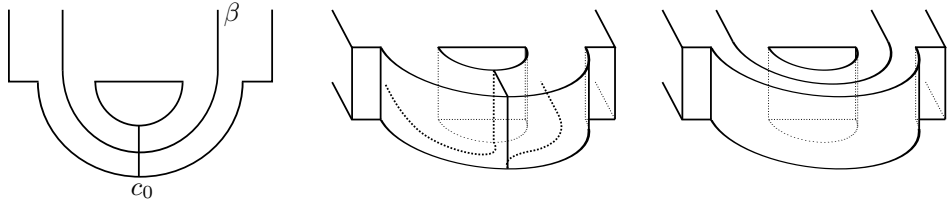}
\caption{Left, the surface $S'$ with the cocore $c_0$ of the 1-handle $H_0$ and the curve $\beta$. Middle, the curve $\gamma_0$ in $H(S')$ drawn as $(c_0\times\{1\})\cup (\partial c_0\times [-1,1])\cup (D_{\beta}(c_0)\times\{-1\})$ in $S'\times[-1,1]$. Right, the curve  $\beta'$ drawn as $\beta\times\{1\}\subset S'\times\{1\}$.}
\label{fig:stab}
\end{figure}

Note that $\gamma_0$ is isotopic to the curve $\beta'\subset\partial H(S')$ corresponding to  $\beta\times\{1\}\subset S'\times\{1\}$, by an isotopy which sends the $\partial H(S')$-framing on $\gamma_0$  to the $(\partial H(S')+1)$-framing on $\beta'$. Since $\beta'$ is contained in the positive region of $\partial H(S')$, the image  $m(\beta')$ is isotopic to $r(b\times\{t\})$ for some embedded curve $b\subset R$ and any $t\in [0,1]$, by an isotopy which sends the $\partial H(S')$-framing on $m(\beta')$ to the $r(R\times\{t\})$-framing on $r(b\times\{t\})$. We may therefore think of $W$ as the cobordism associated to $(+1)$-surgery on $r(b\times\{t\})$. But this is exactly the sort of cobordism used to define the canonical isomorphisms relating the sutured instanton homologies associated to different closures of a sutured manifold, as described in \cite[Section 9]{bs3}. In particular, the map in \eqref{eqn:Wmapiso} is an isomorphism, proving that $\mathscr{H}^{c_0}$ is nonzero.
\end{proof}

This completes the proof of Theorem \ref{thm:well-defined4}. \qed

\subsection{Properties} Below, we establish some properties of the invariant $\theta(M,\Gamma,\xi)$. The first result below says that the invariant $\theta$ behaves functorially with respect to contact $(+1)$-surgery.

\begin{proposition}
\label{prop:contactsurgeryrelativegirouxinstanton}
Suppose $K$ is a Legendrian knot in the interior of $(M,\Gamma,\xi)$ and  that $(M',\Gamma',\xi')$ is the result of contact $(+1)$-surgery on $K$. Then the map \[F_K:\SHItfun(-M,-\Gamma)\to\SHItfun(-M',-\Gamma')\] corresponding to this surgery, as defined in Subsection \ref{ssec:2handles}, sends $\theta(M,\Gamma,\xi)$ to $\theta(M',\Gamma',\xi')$.
\end{proposition}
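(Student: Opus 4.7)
The plan is to realize contact $(+1)$-surgery on $K$ at the level of partial open books and then identify $F_K$ with a contact $2$-handle map occurring in the definition of $\theta$.

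By the relative Giroux correspondence (Theorem~\ref{thm:relativegiroux1}), choose a partial open book decomposition $(S,P,h,\mathbf{c},f)$ of $(M,\Gamma,\xi)$. A standard argument extending the closed case shows that, after a sequence of positive stabilizations, we may assume $K$ is Legendrian-isotopic into the product region $H(S)$ and lies on a page $S\times\{t_0\}$, with its contact framing agreeing with the page framing; by Proposition~\ref{prop:invariancestab} these stabilizations do not alter $\theta(M,\Gamma,\xi)$. Apply one further positive stabilization $(S',P',h',\mathbf{c}',f')$ whose stabilizing curve $\beta$ is isotopic (through Legendrian curves on the page) to $K$, with cocore basis arc $c_0$. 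As in the proof of Proposition~\ref{prop:invariancestab}, the associated attaching curve $\gamma_0\subset\partial H(S')$ obtained from $c_0$ via \eqref{eqn:basishandle} is isotopic to $\beta'$, and hence to $K$, with a framing shift of $+1$.

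The geometric heart of the argument is the following. Attaching a contact $2$-handle along $\gamma_0$ with its natural framing gives back $(M,\Gamma,\xi)$---this is the cancellation underlying invariance under positive stabilization. If instead one replaces this attachment with $\partial H(S')$-framed surgery on a pushoff $\gamma_0'$---which corresponds geometrically to contact $(+1)$-surgery on $K$---the resulting sutured contact manifold is $(M',\Gamma',\xi')$. Consequently $(M',\Gamma',\xi')$ admits a partial open book description in which the $\gamma_0$ attachment is replaced by this shifted-framing attachment along $\gamma_0'$. Unpacking the definitions from Subsection~\ref{ssec:2handles}, the cobordism used to define $F_K$ is exactly this shifted-framing $2$-handle cobordism, while the contact $2$-handle map $\mathscr{H}^{c_0}$ appearing in the definition of $\theta(M,\Gamma,\xi)$ uses the natural-framing cobordism. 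The remaining contact $2$-handle attachments along $\gamma_1',\dots,\gamma_n'$ may be arranged disjoint from a neighborhood of $K$, hence disjoint from the surgery locus.

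Combining these ingredients yields a commutative diagram in which $F_K$ carries the composition defining $\theta(M,\Gamma,\xi)$ to the composition defining $\theta(M',\Gamma',\xi')$: one first applies $F_K$ at the level of $\SHItfun(-H(S'))$, which commutes past the $\gamma_i'$-attachments by the disjoint-supports argument already used in Subsection~\ref{ssec:2handles}, and then invokes Lemma~\ref{lem:nathandle} to identify the resulting element with $\theta(M',\Gamma',\xi')$. The main obstacle is the first step---establishing a relative version of the fact that any Legendrian can be placed on a page after stabilization---together with a careful comparison of framings to confirm that the ``shifted'' cobordism for the surgery on $\gamma_0'$ is indeed the one inducing $F_K$.
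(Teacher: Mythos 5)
There is a genuine gap at the geometric heart of your argument: you cannot choose a positive stabilization whose stabilizing curve $\beta$ is isotopic to a given Legendrian $K$. Within the page $S'$ this already fails, since $\beta$ must meet the cocore $c_0$ of the new $1$-handle exactly once while $K$ lies on the old page $S$; and in the ambient manifold it fails as well, because the stabilizing curve is never an arbitrary knot: after the cancelling contact $2$-handle is attached along $\gamma_0$, the pushoff $\gamma_0'$ bounds the cocore disk of that handle, so $\beta'\simeq\gamma_0$ is smoothly an unknot contained in a Darboux ball of $(M,\Gamma,\xi)$. In particular, if $K$ is (say) homotopically essential, no stabilization can produce $\beta\simeq K$. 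Relatedly, your claimed dichotomy is internally inconsistent: by the construction in Subsection \ref{ssec:2handles}, the $\partial H(S')$-framed surgery on the pushoff $\gamma_0'$ \emph{is} the operation underlying the contact $2$-handle attachment along $\gamma_0$ (the two results differ only by a contact $1$-handle), so it cannot simultaneously ``give back $(M,\Gamma,\xi)$'' and ``give $(M',\Gamma',\xi')$.'' The surgery appearing in stabilization invariance is contact $(+1)$-surgery on that standard unknot in a ball---which is exactly why it changes nothing---not contact $(+1)$-surgery on $K$. Your proposal therefore never actually produces $(M',\Gamma',\xi')$, nor a partial open book for it, and the identification of $F_K$ with $\mathscr{H}^{c_0}$ does not hold.

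The actual proof runs differently. One chooses the partial open book $(S,P,h,\mathbf{c},f)$ so that $K=f(\Lambda)$, where $\Lambda$ is a pushoff of $\lambda\times\{-1\}$ for a curve $\lambda\subset P$ meeting $c_1$ once and disjoint from the other basis arcs, with contact framing equal to the $S$-framing; this is arranged by including $K$ in the Legendrian graph of the contact cell decomposition (your first step, putting $K$ on a page with matching framings, is the analogue of this and is fine). The key point you are missing is that contact $(+1)$-surgery on $\Lambda$ changes the \emph{monodromy}: there is a canonical contactomorphism $H'(S)\to H(S)$ carrying $\boldsymbol{\gamma}(h,\mathbf{c})$ to $\boldsymbol{\gamma}(h\circ D_{\lambda}^{-1},\mathbf{c})$, so $(S,P,h\circ D_{\lambda}^{-1},\mathbf{c})$ is a partial open book for $(M',\Gamma',\xi')$. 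One then checks that $F_\Lambda$ commutes with the contact $2$-handle maps (disjoint supports of the cobordisms) and that the handlebody-level map $F_\Lambda:\SHItfun(-H(S))\to\SHItfun(-H'(S))$ is an isomorphism---because $\Lambda$ is isotopic to a curve in the negative region of $\partial H(S)$, its surgery cobordism is of the same type as those defining the canonical isomorphisms between closures---hence sends $\mathbf{1}$ to $\mathbf{1}$; Lemma \ref{lem:nathandle} then finishes the bookkeeping. Your outline supplies no substitute for the monodromy change $h\mapsto h\circ D_\lambda^{-1}$ or for the generator-to-generator statement, so the argument as written does not go through.
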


\begin{proof}
Let $(S,P,h,\mathbf{c},f)$ be a partial open book decomposition of $(M,\Gamma,\xi)$ such that $K =f(\Lambda)$, where $\Lambda$ is a pushoff of \[\lambda\times\{-1\}\subset S\times[-1,1]\subset M(S,P,h,\mathbf{c})\] into the interior of $M(S,P,h,\mathbf{c})$, where $\lambda\subset P$ is a curve which intersects $c_1\in\mathbf{c}$ in a single point, is disjoint from all other $c_i$, and is not homotopic to $\partial P$. We further require that the contact framing on $K$ agrees with the contact framing on $\Lambda$ (which is induced by the $S$-framing on $\lambda$).
One can construct an $(S,P,h,\mathbf{c},f)$ with these properties by including $K$ in the Legendrian graph used to define the partial open book, as described in \cite{hkm4}. 

Let $H'(S)$ and $M'(S,P,h,\mathbf{c})$  be the contact manifolds obtained from $H(S)$ and $M(S,P,h,\mathbf{c})$, respectively, by performing contact $(+1)$-surgery on $\Lambda$. The contactomorphism $f$ naturally induces a  contactomorphism \[\bar f:M'(S,P,h,\mathbf{c})\to (M',\Gamma',\xi')\] such that the diagram  \
\[ \xymatrix@C=50pt@R=35pt{
\SHItfun(-M(S,P,h,\mathbf{c}))   \ar[r]^-{ \SHItfun(f)} \ar[d]_{F_\Lambda} &\SHMtfun(-M,-\Gamma) \ar[d]^{F_K}\\
\SHItfun(-M'(S,P,h,\mathbf{c})) \ar[r]_-{\SHItfun(\bar f)}  & \SHMtfun(-M',-\Gamma') }\] 
 commutes, by the same sort of argument as was used in the proof of Lemma \ref{lem:nathandle}. Note that there is a canonical isotopy class of contactomorphism \[g:H'(S)\to H(S)\] which sends the attaching set \[\boldsymbol{\gamma}(h,\mathbf{c})\subset \partial H'(S)\,\,\,\,\,\,{\rm to}\,\,\,\,\,\,\boldsymbol{\gamma}(h\circ D_{\lambda}^{-1},\mathbf{c})\subset \partial H(S).\] This map naturally induces a contactomorphism  \[\bar g:M'(S,P,h,\mathbf{c})\to M(S,P,h\circ D_{\lambda}^{-1},\mathbf{c}).\] Thus, \[(S,P,h\circ D_{\lambda}^{-1},\mathbf{c},f_\lambda:=\bar f\circ (\bar g)^{-1})\] is a partial open book decomposition for $(M',\Gamma',\xi')$.
 
 Let 
\begin{align*}
\mathscr{H}&:\SHItfun(-H(S))\to \SHItfun(-M(S,P,h,\mathbf{c}))\\
\mathscr{H}'&:\SHItfun(-H(S'))\to \SHItfun(-M'(S,P,h,\mathbf{c}))\\
\mathscr{H}_{\lambda}&:\SHItfun(-H(S))\to \SHItfun(-M(S,P,h\circ D_{\lambda}^{-1},\mathbf{c}))
\end{align*} be the compositions of contact 2-handle maps associated to the attaching sets 
\[\boldsymbol{\gamma}(h,\mathbf{c})\subset \partial H(S)\,\,\,\,\, \,{\rm and}\,\,\,\,\,\,
\boldsymbol{\gamma}(h,\mathbf{c})\subset \partial H'(S)\,\,\,\,\, \,{\rm and}\,\,\,\,\,\,
\boldsymbol{\gamma}(h\circ D_{\lambda}^{-1},\mathbf{c})\subset \partial H(S),\]
respectively. The commutativity of the diagram
  \[ \xymatrix@C=50pt@R=35pt{
\SHItfun(-M(S,P,h,\mathbf{c}))   \ar[r]^-{\SHItfun(f)} \ar[d]_{\SHItfun(\bar g)\,\circ\, F_\Lambda } & \SHItfun(-M,-\Gamma) \ar[d]^{F_K}\\
  \SHItfun(-M(S,P,h\circ D_{\lambda}^{-1},\mathbf{c})) \ar[r]_-{\SHItfun(f_\lambda)}  & \SHItfun(-M',-\Gamma')}\] follows immediately from that of the previous diagram combined with the fact that \[ \SHItfun(\bar f) = \SHItfun(f_\lambda)\circ\SHItfun(\bar g).\]
Since 
\begin{align*}
\SHItfun(f)(\mathscr{H}(\mathbf{1})) &= \theta(M,\Gamma,\xi),\\
\SHItfun(f_\lambda)(\mathscr{H}_\lambda(\mathbf{1})) &= \theta(M',\Gamma',\xi'),
\end{align*}
by definition, 
it suffices for the proof of the proposition to show that \begin{equation}\label{eqn:keyeq}(\SHItfun(\bar g)\circ F_{\Lambda})(\mathscr{H}(\mathbf{1}))= \mathscr{H}_{\lambda}(\mathbf{1}).\end{equation} For this, consider the diagram 
 \[ \xymatrix@C=30pt@R=35pt{
\SHItfun(-H(S))   \ar[r]^-{\mathscr{H}} \ar[d]_{F_\Lambda } & \SHItfun(-M(S,P,h,\mathbf{c})) \ar[d]^{F_\Lambda}\\
\SHItfun(-H'(S))   \ar[r]^-{\mathscr{H}'} \ar[d]_{\SHItfun(g) } & \SHItfun(-M'(S,P,h,\mathbf{c})) \ar[d]^{\SHItfun(\bar g)}\\
  \SHItfun(-H(S)) \ar[r]_-{\mathscr{H}_\lambda}  & \SHItfun(-M(S,P,h\circ D_{\lambda}^{-1},\mathbf{c})).
  }\] 
The top square commutes since the 2-handle cobordisms between closures used to define the maps obviously commute. Moreover, the leftmost map \[F_{\Lambda}:\SHItfun(-H(S))\to \SHItfun(-H'(S))\] is induced by the same sort of 2-handle cobordism that defines the canonical isomorphisms between different closures of the same genus, since $\Lambda$ is isotopic to a curve contained in the negative region of $\partial H(S)$ (see \cite[Section 9]{bs3}). In particular, it is an isomorphism, and therefore sends $\mathbf{1}$ to $\mathbf{1}.$ It follows that the rightmost map \[F_{\Lambda}:\SHItfun(-M(S,P,h,\mathbf{c}))\to \SHItfun(-M'(S,P,h,\mathbf{c}))\] satisfies \begin{equation}\label{eqn:Lambda}F_{\Lambda}(\mathscr{H}(\mathbf{1})) = \mathscr{H}'(\mathbf{1}).\end{equation} The bottom square in the diagram commutes by Lemma \ref{lem:nathandle}, and since $\SHItfun(g)$ is an isomorphism, it sends $\mathbf{1}$ to $\mathbf{1}.$ Hence, \begin{equation}\label{eqn:Lambda2}\SHItfun(\bar g)(\mathscr{H}'(\mathbf{1})) = \mathscr{H}_\lambda(\mathbf{1}).\end{equation} Putting \eqref{eqn:Lambda} and \eqref{eqn:Lambda2} together, we obtain \eqref{eqn:keyeq}, completing the proof of Proposition \ref{prop:contactsurgeryrelativegirouxinstanton}.
\end{proof}

Next, we show that the invariant $\theta$ behaves as one would expect with respect to contactomorphism.

\begin{proposition}
\label{prop:contactomorphism} Suppose \[g:(M,\Gamma,\xi)\to(M',\Gamma',\xi')\] is a contactomorphism. Then the map \[\SHItfun(g): \SHItfun(-M,-\Gamma)\to\SHItfun(-M',-\Gamma')\] sends $\theta(M,\Gamma,\xi)$ to $\theta(M',\Gamma',\xi')$.
\end{proposition}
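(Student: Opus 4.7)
The plan is to use the well-definedness of $\theta$ (Theorem \ref{thm:well-defined4}) together with the functoriality of $\SHItfun$ to reduce the statement to a tautology at the level of partial open book decompositions.

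First I would choose an arbitrary partial open book decomposition $(S,P,h,\mathbf{c},f)$ of $(M,\Gamma,\xi)$. The key observation is that the composition $g\circ f$ is a contactomorphism
\[
g\circ f:M(S,P,h,\mathbf{c})\to (M',\Gamma',\xi'),
\]
so $(S,P,h,\mathbf{c},g\circ f)$ is itself a partial open book decomposition of $(M',\Gamma',\xi')$. By Theorem \ref{thm:well-defined4}, the contact element does not depend on the partial open book decomposition used to compute it, so
\[
\theta(M',\Gamma',\xi')=\theta(S,P,h,\mathbf{c},g\circ f)=\SHItfun(g\circ f)(\mathscr{H}(\mathbf{1})),
\]
where $\mathscr{H}:\SHItfun(-H(S))\to\SHItfun(-M(S,P,h,\mathbf{c}))$ is the composition of contact $2$-handle attachment maps specified by $\boldsymbol{\gamma}(h,\mathbf{c})$, and $\mathbf{1}$ is the generator of $\SHItfun(-H(S))\cong\C$.

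Next I would appeal to the functoriality of $\SHItfun$ as a functor $\DiffSut\to\RPSys[\C]$, proved in \cite{bs3}, to split
\[
\SHItfun(g\circ f)=\SHItfun(g)\circ\SHItfun(f).
\]
Substituting this into the formula for $\theta(M',\Gamma',\xi')$ gives
\[
\theta(M',\Gamma',\xi')=\SHItfun(g)\bigl(\SHItfun(f)(\mathscr{H}(\mathbf{1}))\bigr)=\SHItfun(g)(\theta(M,\Gamma,\xi)),
\]
which is exactly the desired equality.

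There is no serious obstacle here; every nontrivial ingredient has already been established. The only step requiring a sentence of verification is that $g\circ f$ really is a contactomorphism from $M(S,P,h,\mathbf{c})$ to $(M',\Gamma',\xi')$ (it is, because $f$ is a contactomorphism onto $(M,\Gamma,\xi)$ and $g$ is a contactomorphism of sutured contact manifolds). The heavy lifting—independence of the partial open book decomposition and the functoriality of $\SHItfun$—was done in Theorem \ref{thm:well-defined4} and in \cite{bs3}, respectively.
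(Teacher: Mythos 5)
Your proof is correct and follows essentially the same route as the paper: take a partial open book decomposition $(S,P,h,\mathbf{c},f)$ of $(M,\Gamma,\xi)$, note that $(S,P,h,\mathbf{c},g\circ f)$ is one for $(M',\Gamma',\xi')$, and apply well-definedness of $\theta$ together with functoriality $\SHItfun(g\circ f)=\SHItfun(g)\circ\SHItfun(f)$. Nothing is missing.
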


\begin{proof} 
Suppose $(S,P,h,\mathbf{c},f)$ is a partial open book decomposition for $(M,\Gamma,\xi)$. Then clearly $(S,P,h,\mathbf{c},g\circ f)$ is a partial open book decomposition for $(M',\Gamma',\xi')$. Letting \[\mathscr{H}:\SHItfun(-H(S))\to\SHItfun(-M(S,P,h,\mathbf{c}))\] be the corresponding composition of contact 2-handle maps, we have that \[\theta(M',\Gamma',\xi') := \SHItfun(g\circ f)(\mathscr{H}(\mathbf{1})) = \SHItfun(g)(\SHItfun(f)(\mathscr{H}(\mathbf{1}))) = \SHItfun(g)(\theta(M,\Gamma,\xi)),\] as desired.
\end{proof}

As explained in the introduction, the contact invariant $\theta$ behaves naturally with respect to the maps induced by handle attachments.

\begin{theorem}
\label{thm:handletheta2} Suppose  $(M_i,\Gamma_i,\xi_i)$  is obtained from $(M,\Gamma,\xi)$ by attaching a contact $i$-handle and $\mathscr{H}_i$ is the associated contact handle attachment map for $i=0,$ $1,$ or $2$. Then  \[\mathscr{H}_i:\SHItfun(-M,-\Gamma)\to\SHItfun(-M_i,-\Gamma_i)\] sends $\theta(M,\Gamma,\xi)$ to $\theta(M_i,\Gamma_i,\xi_i).$
\end{theorem}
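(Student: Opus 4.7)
The plan is to establish, in each case $i\in\{0,1,2\}$, a \emph{compatible} partial open book decomposition of $(M_i,\Gamma_i,\xi_i)$ built from a partial open book $(S,P,h,\mathbf{c},f)$ of $(M,\Gamma,\xi)$, and then show that an evident square of contact handle and contact 2-handle maps commutes by invoking Lemma \ref{lem:nathandle} together with the disjointness of the underlying cobordisms. The conclusion will then follow by chasing generators.

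First I would fix a partial open book decomposition $(S,P,h,\mathbf{c},f)$ of $(M,\Gamma,\xi)$. For $i=0$ I take $\tilde S = S\sqcup D^2$ with the same $P$, $h$, $\mathbf{c}$, using the identification $H(D^2)\cong(B^3,S^1,\xi_{std})$ to define $\tilde f = f\sqcup\mathrm{id}$; the contact 2-handle attaching set $\boldsymbol{\gamma}(h,\mathbf{c})$ is unchanged. For $i=1$ I take $\tilde S = S\cup H_0$, where $H_0$ is a 1-handle of $S$ attached in a location dual to the attaching disks of the contact 1-handle on $\partial M$; again $\boldsymbol{\gamma}(h,\mathbf{c})$ is unchanged and $\tilde f$ is induced by $f$. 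For $i=2$ I would, after positively stabilizing the original partial open book sufficiently many times (which leaves $\theta(M,\Gamma,\xi)$ unchanged by Theorem \ref{thm:well-defined4}), produce a candidate basis arc $c_0\subset S$ not in $\mathbf{c}$ whose associated curve $\gamma_0\subset\partial H(S)$, under $f$ composed with the intervening 2-handle attachments, is carried to the prescribed attaching curve $\gamma\subset\partial M$. Then $(\tilde S,\tilde P,\tilde h,\tilde{\mathbf{c}}) := (S,\, P\cup \nu(c_0),\, h\cup\mathrm{id}_{\nu(c_0)},\, \mathbf{c}\cup\{c_0\})$ with $\tilde f$ induced by $f$ is a partial open book decomposition of $(M_2,\Gamma_2,\xi_2)$.

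Let $\mathscr{H}$ and $\tilde{\mathscr{H}}$ denote the compositions of contact 2-handle maps associated to $\boldsymbol{\gamma}(h,\mathbf{c})$ and $\boldsymbol{\gamma}(\tilde h,\tilde{\mathbf{c}})$ respectively, and let $\mathbf{1}\in\SHItfun(-H(S))$ and $\tilde{\mathbf{1}}\in\SHItfun(-H(\tilde S))$ be the generators. For $i=0,1$, there are induced contact $i$-handle maps
\[\mathscr{H}_i^H\colon\SHItfun(-H(S))\to\SHItfun(-H(\tilde S)),\qquad \mathscr{H}_i^M\colon\SHItfun(-M(S,P,h,\mathbf{c}))\to\SHItfun(-M(\tilde S,\tilde P,\tilde h,\tilde{\mathbf{c}}))\]
which satisfy $\mathscr{H}_i^M\circ\mathscr{H} = \tilde{\mathscr{H}}\circ\mathscr{H}_i^H$, since the $i$-handle and 2-handle cobordisms are supported in disjoint portions of the relevant closures and therefore can be stacked in either order. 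Moreover $\mathscr{H}_i^H(\mathbf{1}) = \tilde{\mathbf{1}}$ because, by construction, $\mathscr{H}_i^H$ is the identity map between closures of a product sutured manifold, hence an isomorphism of rank one systems. For $i=2$ one has directly $\tilde{\mathscr{H}} = \mathscr{H}_2^{\gamma_0}\circ\mathscr{H}$, where $\mathscr{H}_2^{\gamma_0}$ is the contact 2-handle map along the image of $\gamma_0$ in $\partial M(S,P,h,\mathbf{c})$, and $\tilde{\mathbf{1}} = \mathbf{1}$. In all three cases, Lemma \ref{lem:nathandle} applied to $f$ yields $\SHItfun(\tilde f)\circ\mathscr{H}_i^M = \mathscr{H}_i\circ\SHItfun(f)$ (respectively $\SHItfun(\tilde f)\circ\mathscr{H}_2^{\gamma_0} = \mathscr{H}_2\circ\SHItfun(f)$ in the $i=2$ case). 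Chaining these identifications gives
\[\mathscr{H}_i(\theta(M,\Gamma,\xi)) = \mathscr{H}_i(\SHItfun(f)(\mathscr{H}(\mathbf{1}))) = \SHItfun(\tilde f)(\tilde{\mathscr{H}}(\tilde{\mathbf{1}})) = \theta(M_i,\Gamma_i,\xi_i),\]
which is the claim.

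The main obstacle will be the $i=2$ case, and specifically the geometric step of verifying that after positive stabilization one can always produce a partial open book of $(M,\Gamma,\xi)$ in which the prescribed 2-handle attaching curve $\gamma$ corresponds to a new basis arc $c_0$. This requires working with the Legendrian graph or contact cell decomposition underlying the partial open book (as in the proof of Theorem \ref{thm:relativegiroux2}) and refining it so that $\gamma$ bounds a new 2-cell; the invariance of $\theta$ under positive stabilization gives us the freedom to perform this refinement without changing the left-hand side of the theorem. Once this geometric input is secured, the commutativity and naturality steps above are essentially formal consequences of Lemma \ref{lem:nathandle} and the construction of the contact $i$-handle maps in Section \ref{sec:handlemaps}.
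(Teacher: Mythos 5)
Your $i=0$ and $i=1$ cases follow the paper's own argument: build a compatible partial open book ($S\sqcup D^2$, respectively $S$ with an extra $1$-handle attached away from $P$), observe that the new handle map on the level of product handlebodies is an isomorphism of rank-one systems, and conclude by commutativity of a diagram via Lemma \ref{lem:nathandle} and the disjointness of the underlying cobordisms. One caution even there: for $i=1$ you cannot simply attach a $1$-handle to the page of an \emph{arbitrary} open book of $(M,\Gamma,\xi)$ ``dual to the attaching disks''---the prescribed disks $D_\pm$ need not sit nicely with respect to the given page. The paper handles this by building the partial open book of $(M_1,\Gamma_1,\xi_1)$ first, from a contact cell decomposition whose Legendrian graph contains the core of the contact $1$-handle, and then observing it restricts to one for $(M,\Gamma,\xi)$; some such adaptation (or an isotopy-invariance argument for the attaching region) is needed to make your step honest.

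The $i=2$ case is where your proposal genuinely diverges from the paper, and it is also where the gap lies. You defer the key geometric step---producing a partial open book of $(M,\Gamma,\xi)$ in which the prescribed attaching curve $\gamma$ is the curve of a new basis arc---and the mechanism you propose is not right as stated: positive stabilizations of the \emph{original} open book are of a constrained form (attach a handle and compose with $D_\beta$) and need not become adapted to $\gamma$; the correct move is to construct an adapted open book directly and invoke Theorem \ref{thm:well-defined4}, not stabilization of the given one. Moreover your explicit model $(\tilde S,\tilde P,\tilde h,\tilde{\mathbf c})=(S,\,P\cup\nu(c_0),\,h\cup\mathrm{id}_{\nu(c_0)},\,\mathbf c\cup\{c_0\})$, with unchanged page and identity monodromy on the new strip, is too restrictive: in general the arc $h(c_0)$ is dictated by how $\gamma$ sits in $\partial M$, and one must also re-verify the basis condition that $S\ssm\tilde{\mathbf c}$ retracts onto $S\ssm\tilde P$. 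The paper avoids all of this: by definition $\mathscr H_2=\mathscr H_1^{-1}\circ\SHItfun(f)\circ F_{\gamma'}$, so the $i=2$ case follows formally from the already-proved $i=1$ case, from Proposition \ref{prop:contactsurgeryrelativegirouxinstanton} (after making the pushoff $\gamma'$ Legendrian, $F_{\gamma'}$ is a contact $(+1)$-surgery map and preserves $\theta$---this is where the open-book-adaptation work was done once and for all), and from Proposition \ref{prop:contactomorphism}. Your route could likely be completed along Honda--Kazez--Mati{\'c} lines, but as written the central adapted-open-book lemma is asserted rather than proved, so the $i=2$ case is not yet established.
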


\begin{proof} 
Let us first assume that $i=0$ and let us adopt all the notation from Subsection \ref{ssec:0handles}. Suppose $(S,P,h,\mathbf{c},f)$ is a partial open book decomposition for $(M,\Gamma,\xi)$. Then $(S', P,h,\mathbf{c}, f')$ is a partial open book decomposition of $(M_0,\Gamma_0,\xi_0)$, where $S'$ is the disjoint union of $S$ with $D^2$ and $f'$ is the disjoint union of $f$ with a contactomorphism  \[H(D^2)\to (B^3,S^1,\xi_{std}).\] Consider the diagram  
 \[ \xymatrix@C=30pt@R=35pt{
\SHItfun(-H(S))   \ar[r]^-{\mathscr{H}_0''} \ar[d]_{\mathscr{H} } & \SHItfun(-H(S'))  \ar[d]^{\mathscr{H}'}\\
\SHItfun(-M(S,P,h,\mathbf{c},f)) \ar[r]^-{\mathscr{H}_0'} \ar[d]_{\SHItfun(f) } & \SHItfun(-M(S',P,h,\mathbf{c},f')) \ar[d]^{\SHItfun(f')}\\
  \SHItfun(-M,-\Gamma) \ar[r]_-{\mathscr{H}_0}  & \SHItfun(-M_0,-\Gamma_0).
  }\] where $\mathscr{H}, \mathscr{H}'$ are the compositions of contact $2$-handle maps of the sort used to define $\theta$, and $\mathscr{H}_0, \mathscr{H}_0',\mathscr{H}_0''$ are the obvious contact $0$-handle maps. Since the map $\mathscr{H}_0''$ is an isomorphism (and therefore sends $\mathbf{1}$ to $\mathbf{1}$), we need only check that this diagram commutes. But this is  straightforward from the definitions of these maps---on the level of closures, the identity and 2-handle cobordisms defining these maps commute. The map $\mathscr{H}_0$ thus preserves the contact invariant as desired.

Let us now assume that $i=1$ and adopt all the notation from Subsection \ref{ssec:1handles}. The proof in this case is similar. We can find a partial open book decomposition $(S,P,h,\mathbf{c},f)$ for $(M,\Gamma,\xi)$ such that $(S',P,h,\mathbf{c},f')$ is a partial open book decomposition for $(M_1,\Gamma_1,\xi_1)$, where $S'$ is the surface obtained  by attaching a $1$-handle to $S$ away from $P$, and \[f':M(S',P,h,\mathbf{c})\to (M_0,\Gamma_0,\xi_0)\] is a contactomorphism which restricts to $f$ on $M(S,P,h,\mathbf{c})\subset M(S',P,h,\mathbf{c})$. (To find open book decompositions with this property, we first construct a partial open book decomposition for $(M_0,\Gamma_0,\xi_0)$ from a contact cell decomposition whose Legendrian graph  contains the core of the contact $1$-handle. We can then arrange that the resulting partial open book decomposition is precisely of the form $(S',P,h,\mathbf{c},f')$, where $(S,P,h,\mathbf{c},f)$ is a partial open book decomposition for $(M,\Gamma,\xi)$, as described above.) As in the previous case, it  suffices to check that the diagram  
 \[ \xymatrix@C=30pt@R=35pt{
\SHItfun(-H(S))   \ar[r]^-{\mathscr{H}_1''} \ar[d]_{\mathscr{H} } & \SHItfun(-H(S'))  \ar[d]^{\mathscr{H}'}\\
\SHItfun(-M(S,P,h,\mathbf{c},f)) \ar[r]^-{\mathscr{H}_1'} \ar[d]_{\SHItfun(f) } & \SHItfun(-M(S',P,h,\mathbf{c},f')) \ar[d]^{\SHItfun(f')}\\
  \SHItfun(-M,-\Gamma) \ar[r]_-{\mathscr{H}_1}  & \SHItfun(-M_1,-\Gamma_1).
  }\] commutes, where $\mathscr{H}, \mathscr{H}'$ are the compositions of contact $2$-handle maps of the sort used to define $\theta$, and $\mathscr{H}_1, \mathscr{H}_1',\mathscr{H}_1''$ are the obvious contact $1$-handle maps. Again, this commutativity is  straightforward from the definitions of these maps. The map $\mathscr{H}_1$ thus preserves the contact invariant as desired.

  Let us now assume that $i=2$ and adopt all the notation from Subsection \ref{ssec:2handles}. The contact $2$-handle attachment map \[\mathscr{H}_2:\SHItfun(-M,-\Gamma)\to\SHItfun(-M_2,-\Gamma_2)\] is defined by $\mathscr{H}_2=\mathscr{H}_1^{-1}\circ\SHItfun(f)\circ F_{\gamma'}$. We have shown that  $\mathscr{H}_1$ preserves the contact invariant; we can assume that $\gamma'$ is Legendrian so that $(M',\Gamma',\xi')$ is obtained from $(M,\Gamma,\xi)$ by contact $(+1)$-surgery on $\gamma'$, which means that $F_{\gamma'}$ preserves the contact invariant, by Proposition \ref{prop:contactsurgeryrelativegirouxinstanton}; and, finally, we can assume that $f$ is a contactomorphism \[f:(M',\Gamma',\xi')\to(M_1,\Gamma_1,\xi_1)\]  (see the discussion in \cite[Subsubsection 4.2.3]{bsSHM}) and therefore preserves the contact invariant by Proposition \ref{prop:contactomorphism}. The map $\mathscr{H}_2$ thus preserves the contact invariant as desired.
\end{proof}

\begin{remark} 
\label{rmk:embedding}Suppose $(M,\Gamma)$ is  a \emph{sutured submanifold} of $(M',\Gamma')$, as defined in \cite{hkm5}. Let $\xi$ be a contact structure on $M'\ssm \inr(M)$ with convex boundary and dividing set $\Gamma$ on $\partial M$ and $\Gamma'$ on $\partial M'$. As explained in Subsection \ref{ssec:future}, the sutured contact manifold $(M'\ssm \inr(M),\Gamma \cup \Gamma',\xi')$ can be obtained from a vertically invariant contact structure on $\partial M\times I$ by attaching contact handles. Given a contact handle decomposition $H$ of this sort, we  define  \[\Phi_{\xi,H}:\SHItfun(-M,-\Gamma)\to\SHItfun(-M',-\Gamma')\] to be the  corresponding composition of  contact handle attachment maps, as in the introduction. Note that if the contact handles in $H$ are $0$-, $1$-, and $2$-handles only and  if $\xi_M$ is a contact structure on $M$ which agrees with $\xi$ near $\partial M$, then \[\Phi_{\xi,H}(\theta(M,\Gamma,\xi_M)) = \theta(M',\Gamma',\xi_M\cup \xi)\] by Theorem  \ref{thm:handletheta2}.  
\end{remark}

Next, we show that $\theta$ vanishes for overtwisted contact structures.

\begin{theorem}
\label{thm:otinstanton}
If $(M,\Gamma,\xi)$ is overtwisted, then $\theta(M,\Gamma,\xi)=0$.
\end{theorem}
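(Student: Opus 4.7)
The plan is to reduce the overtwisted vanishing statement to the case of a Darboux ball containing an overtwisted disk, and then to prove the vanishing in that model by choosing a partial open book adapted to the disk.

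For the reduction, I would observe that since $(M,\Gamma,\xi)$ is overtwisted, it contains a small Darboux ball $B\subset\mathrm{int}(M)$ such that $(B,\xi|_B)$ contains an overtwisted disk; regarding $\partial B$ as an $S^2$ with a single suture, $(B,S^1,\xi|_B)$ is a sutured contact submanifold of $(M,\Gamma,\xi)$ in the sense of Remark~\ref{rmk:embedding}. The complement $M\smallsetminus\mathrm{int}(B)$, together with the vertically invariant contact structure coming from $\xi$ near $\partial B$, admits a contact handle decomposition from $\partial B$ to $\partial M$; because $\partial M$ is non-empty in the sutured sense (and $R(\Gamma)$ has no closed components), one can arrange this decomposition to use only contact $0$-, $1$-, and $2$-handles. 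By Theorem~\ref{thm:handletheta2} applied inductively (as in Remark~\ref{rmk:embedding}), the invariant $\theta$ is preserved by each such attachment. It therefore suffices to prove $\theta(B,S^1,\xi|_B)=0$.

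For the model computation, I would exploit the overtwisted disk to choose a convenient partial open book $(S,P,h,\mathbf{c},f)$ of $(B,S^1,\xi|_B)$. Concretely, pick a Legendrian unknot $L\subset(B,\xi|_B)$ with $\mathrm{tb}(L)=0$ bounding the overtwisted disk, place $L$ in the Legendrian graph used by Etg\"u--\"Ozba\u{g}c{\i} and Honda--Kazez--Mati\'c to construct the partial open book, and ensure that there is a basis arc $c_1\in\mathbf{c}$ and a curve $\lambda\subset P$ whose corresponding attaching curve $\gamma_1\subset\partial H(S)$ is Legendrian isotopic in $H(S)$ to a Legendrian pushoff of $L$ with its overtwisted-disk framing. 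Then, after applying Proposition~\ref{prop:contactsurgeryrelativegirouxinstanton} (and the analogue for contact $(-1)$-surgery encoded by the $2$-handle map), the composition $\mathscr{H}$ used to define $\theta(S,P,h,\mathbf{c},f)$ factors as $\mathscr{H}=\mathscr{H}_{\mathrm{rest}}\circ\mathscr{H}_{c_1}$, where $\mathscr{H}_{c_1}$ is the contact $2$-handle map associated to $\gamma_1$.

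The crux is then to show that $\mathscr{H}_{c_1}(\mathbf{1})=0$ in $\SHItfun(-M(S,P,h,\{c_1\}))$. Here one argues that the underlying surgery cobordism $W$ from a closure $(Y,R)$ of $H(S)$ to the corresponding closure $(Y',R)$ realizes a Lutz-twist-type operation: because $L$ bounds an embedded disk with contact framing equal to the surgery framing, the cobordism $W$ contains an embedded $2$-sphere of square $0$, obtained by capping off the overtwisted disk with the core of the $2$-handle. The instanton Floer map $I_\ast(-W\mid -R)_{-\nu}$ restricted to the eigenspace used in the definition of $\SHIt$ can then be shown to vanish on the generator, either by a direct neck-stretching argument on this sphere (in the spirit of Kronheimer--Mrowka's vanishing results for instanton Floer homology in the presence of essential spheres of appropriate intersection) or by factoring through $I_\ast$ of a manifold for which the relevant eigenspace of $\mu(R)$ is zero.

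The main obstacle is the last step: the only genuinely new input needed over the existing machinery of the paper is a vanishing result for the instanton Floer cobordism map across a Lutz-type $2$-handle cobordism containing the overtwisted disk's capped-off sphere. This is where the proof must differ substantially from the monopole argument in \cite{bsSHM}, since positive scalar curvature/reducible-solution arguments are unavailable; instead one has to leverage eigenvalue/excision properties of the $\mu(R)$ decomposition on the closure, which is the delicate technical core of the proof.
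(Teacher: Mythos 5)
There is a genuine gap at the step you yourself flag as the crux. Your plan ultimately rests on a vanishing theorem for the instanton cobordism map in the presence of a capped-off \emph{sphere}, and no such theorem is available: the adjunction-type vanishing result cited in this paper (\cite{km6}) applies only to embedded surfaces of genus at least $1$ violating $\Sigma\cdot\Sigma\le 2g(\Sigma)-2$, and the neck-stretching/eigenvalue arguments behind it break down for spheres (the relevant moduli and representation-variety structure on a tubular neighborhood requires positive genus). Moreover, your framing count is off: the contact $2$-handle map is $\partial M$-framed surgery, i.e.\ contact $(+1)$-surgery on the Legendrian realization, so for $L=\partial D$ with $tb(L)=0$ the capped-off sphere would have square $+1$, not $0$ --- and in any case the overtwisted disk lives in $M$, not in $H(S)$ or its closure, so the sphere is not even visibly embedded in the $2$-handle cobordism $W$ you want to use, since at that stage of the handle attachment the disk bounded by $\gamma_1$ is not yet present. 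This missing vanishing statement is exactly the obstruction the paper's proof is designed to circumvent: there one connect-sums $\partial D$ with a Legendrian right-handed trefoil $K$ with $tb(K)=1$, $rot(K)=0$, obtaining $K'=K\#\partial D$ with $tb(K')=2$ and a genus-$1$ Seifert surface; the contact $(+1)$-surgery cobordism presenting $(M,\Gamma,\xi)$ as surgery on a pushoff of $K'$ then contains a closed genus-$1$ surface of square $tb(K')-1=1$, which violates the adjunction inequality, so the cobordism map vanishes and Proposition \ref{prop:contactsurgeryrelativegirouxinstanton} gives $\theta(M,\Gamma,\xi)=0$ directly, with no reduction to a Darboux ball at all.

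Two further remarks on your reduction step: it is not needed (the trefoil argument runs inside $(M,\Gamma,\xi)$ itself), and as written it is incomplete --- the claim that $M\smallsetminus\mathrm{int}(B)$ admits a contact handle decomposition with only $0$-, $1$-, and $2$-handles requires an argument (the paper only makes the analogous arrangement in Corollary \ref{cor:embedding2} after choosing the Darboux balls freely, whereas your ball $B$ is constrained to contain the overtwisted disk). The reduction direction itself is sound given Theorem \ref{thm:handletheta2}, since morphisms send $0$ to $0$, but it does not repair the central missing vanishing result.
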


\begin{proof}

Let $N\subset M$ be a neighborhood of an overtwisted disk $D$. Take a Darboux ball in $N\ssm D$ and let $K$ be a Legendrian right-handed trefoil  in this ball with $tb(K)=1$ and $rot(K)=0$. Then the connected sum $K' = K\# \partial D$ is a Legendrian trefoil with $tb(K')=2$, and it has a connected Seifert surface $\Sigma\subset N\subset M$ of genus $1$.

Let $(M_-,\Gamma_-,\xi_-)$ be the result of contact $(-1)$-surgery on $K'$. Suppose $\data = (Y,R,r,m,\eta,\alpha)$ is a marked odd closure of $(M,\Gamma)$ and let $\data_-= (Y_-,R,r,m_-,\eta,\alpha)$ be the induced closure of $(M_-,\Gamma_-,\xi_-)$, where $Y_-$ is obtained from $Y$ via  contact $(-1)$-surgery on $m(K')$. Let $X$ be the associated $2$-handle cobordism from $Y$ to $Y_-$. Now, $(M,\Gamma,\xi)$ can be thought of as being obtained from $(M_-,\Gamma_-,\xi_-)$ via contact $(+1)$-surgery on a Legendrian pushoff $K''\subset M_-$ of $K'$. The associated $2$-handle cobordism from $Y_-$ to $Y$ is isomorphic to $-X$. The morphism \[F_{K''}:\SHItfun(-M_-,-\Gamma_-)\to\SHItfun(-M,-\Gamma)\] is therefore the equivalence class of the map associated to  $X$, viewed as a cobordism from $-Y_-$ to $-Y$. 

We can cap off $\Sigma$ to a closed surface $\Sigma' \subset X$ of genus $1$ with self-intersection \[\Sigma'\cdot\Sigma' = tb(K')-1=1.\] This surface violates the adjunction inequality $\Sigma'\cdot\Sigma'\leq 2g(\Sigma')-2$, which implies that the map induced by the cobordism $X$ is zero \cite{km6}. It follows that $F_{K''}\equiv 0$. But this map sends $\theta(M_-,\Gamma_-,\xi_-)$ to $\theta(M,\Gamma,\xi)$, by Proposition \ref{prop:contactsurgeryrelativegirouxinstanton}. Thus, $\theta(M,\Gamma,\xi)=0.$
\end{proof}

\begin{remark} The idea above of using the right-handed trefoil was suggested to us by Peter Kronheimer and has been used to prove similar results; see \cite{mr}, for example.
\end{remark}

Given a closed 3-manifold $Y$, we denote by $Y(n)$ the sutured manifold obtained by removing $n$ disjoint 3-balls from $Y$, where the suture on each component of $\partial Y(n)$ consists of a single curve. The following is perhaps the most important result of this subsection.

\begin{theorem}
\label{thm:stein}
Suppose $(Y,\xi)$ is a closed contact manifold which is Stein fillable. Then the  invariant $\theta(Y(n),\xi|_{Y(n)})$ of the sutured contact manifold obtained from $(Y,\xi)$ by removing $n$ Darboux balls is nonzero.
\end{theorem}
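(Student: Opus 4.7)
The plan is to reduce the general Stein fillable case to the standard tight contact structure on $S^3$ with Darboux balls removed, using the handle attachment machinery and the contact $(+1)$-surgery functoriality developed earlier in the paper.

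First, I would handle the base case. The ball $(B^3, \xi_{std}|_{B^3}) = (S^3(1), \xi_{std}|_{S^3(1)})$ is the product sutured contact manifold $H(D^2)$, so by Proposition \ref{prop:productsuturedinstanton} and the definition of the contact invariant (applied to the trivial partial open book with no basis arcs), $\theta(S^3(1), \xi_{std}|_{S^3(1)}) = \mathbf{1}$ is a generator of $\SHItfun(-S^3(1)) \cong \C$. For any $m \geq 1$, the sutured contact manifold $(S^3(m), \xi_{std}|_{S^3(m)})$ is obtained from $(S^3(1), \xi_{std}|_{S^3(1)})$ by attaching $m-1$ contact $0$-handles, so Theorem \ref{thm:handletheta2} yields $\theta(S^3(m), \xi_{std}|_{S^3(m)}) \neq 0$.

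Next, I would extend this to $(\#^k(S^1\times S^2), \xi_{std})$. The key observation is that $((\#^k(S^1\times S^2))(n), \xi_{std}|_{(\#^k(S^1\times S^2))(n)})$ is obtained from $(S^3(n+k), \xi_{std}|_{S^3(n+k)})$ by attaching $k$ sutured contact $1$-handles, each one pairing two of the $n+k$ boundary spheres and fusing them into a single sphere boundary. This is the sutured avatar of the standard Weinstein presentation of $\natural^k(S^1\times B^3)$ as $B^4$ with $k$ Weinstein $1$-handles. Applying Theorem \ref{thm:handletheta2} once more, $\theta((\#^k(S^1\times S^2))(n), \xi_{std}|_{(\#^k(S^1\times S^2))(n)}) \neq 0$.

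Finally, for a general Stein fillable $(Y,\xi)$, Eliashberg's theorem presents $(Y,\xi)$ as the result of contact $(-1)$-surgery on a Legendrian link $L = L_1\cup\cdots\cup L_m \subset \#^k(S^1\times S^2)$. By the cancellation principle for contact surgery, $(\#^k(S^1\times S^2), \xi_{std})$ is recovered from $(Y,\xi)$ by contact $(+1)$-surgery on a Legendrian link $L' \subset Y$ of pushoffs of the $L_i$. Arranging $L'$ to lie in the interior of $Y(n)$ (disjoint from the removed Darboux balls) and iterating Proposition \ref{prop:contactsurgeryrelativegirouxinstanton}, I obtain a morphism
\[
F: \SHItfun(-Y(n)) \to \SHItfun(-(\#^k(S^1\times S^2))(n))
\]
satisfying $F(\theta(Y(n), \xi|_{Y(n)})) = \theta((\#^k(S^1\times S^2))(n), \xi_{std}|_{(\#^k(S^1\times S^2))(n)})$, which is nonzero by the previous paragraph. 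Hence $\theta(Y(n), \xi|_{Y(n)}) \neq 0$.

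The step I expect to be most delicate is the identification in the second paragraph: verifying that the output of attaching $k$ sutured contact $1$-handles in the sense of Subsection \ref{ssec:1handles} is contactomorphic, as a sutured contact manifold, to $(\#^k(S^1\times S^2))(n)$ with the restricted standard tight contact structure. The careful matching of convex-boundary and corner-rounding conventions with the Weinstein/Giroux picture of $1$-handle attachment is where the main geometric bookkeeping resides.
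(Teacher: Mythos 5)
Your final paragraph (Eliashberg's presentation of a Stein fillable $(Y,\xi)$ as contact $(-1)$-surgery on a link in $\#^k(S^1\times S^2)$, undone by contact $(+1)$-surgeries, plus iterated use of Proposition \ref{prop:contactsurgeryrelativegirouxinstanton}) is exactly the paper's first step, and your observation that contact $1$-handle maps preserve $\theta$ and are isomorphisms is fine. The gap is in your base case. Attaching a contact $0$-handle means taking a \emph{disjoint union} with a Darboux ball, so $(S^3(1),\xi_{std})$ with $m-1$ contact $0$-handles attached is disconnected, not the connected $S^3(m)$; removing an interior Darboux ball is not a contact $0$-handle attachment in this framework. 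Moreover, no repair using only $0$- and $1$-handles is possible: both kinds of maps are induced by identity maps on common closures, hence are isomorphisms, so anything built from the rank-one product $H(D^2)$ by $0$- and $1$-handles alone has rank-one sutured instanton homology, whereas $\SHItfun(S^3(m))\cong\C^{2^{m-1}}$ by Lemma \ref{lem:s1s2} (with $k=0$). Any presentation of $S^3(m)$ for $m\ge 2$, and likewise of $(\#^k(S^1\times S^2))(n)$, must therefore involve contact $2$-handles, and Theorem \ref{thm:handletheta2} only says a $2$-handle map carries $\theta$ to $\theta$ --- it could carry it to zero. So the nonvanishing of $\theta((\#^k(S^1\times S^2))(n))$, which is the heart of the theorem, is not actually established by your first two paragraphs.

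This is precisely where the paper does its real work: it writes down an explicit partial open book for $(\#^k(S^1\times S^2))(n)$ (a disk with $k$ unlinked $1$-handles and $k+n-1$ basis arcs), realizes each contact $2$-handle map as a surgery cobordism map between closures, and proves each such map is injective by combining the surgery exact triangle with the rank computation of Lemma \ref{lem:s1s2}: the three groups in the triangle have ranks $2^{i-1}$, $2^{i}$, $2^{i-1}$, and exactness then forces injectivity. Some argument of this kind --- an exact triangle plus dimension count, or another mechanism certifying injectivity of the relevant contact $2$-handle maps --- is needed in place of your first two paragraphs; once that is supplied, your concluding surgery-functoriality step goes through as written.
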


As promised in the introduction, we have the following  corollary.

\begin{corollary}
\label{cor:embedding2} 
If $(M,\Gamma,\xi)$ embeds  as a sutured contact submanifold of a Stein fillable contact manifold, then $\theta(M,\Gamma,\xi) \neq 0$.
\end{corollary}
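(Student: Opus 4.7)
The plan is to realize $(Y(n),\Gamma(n),\xi_Y|_{Y(n)})$ for some $n\geq 1$ as the result of attaching only contact $0$-, $1$-, and $2$-handles to $(M,\Gamma,\xi)$, and then to conclude by combining Remark \ref{rmk:embedding} with Theorem \ref{thm:stein}. Let $(Y,\xi_Y)$ be a closed Stein fillable contact $3$-manifold containing $(M,\Gamma,\xi)$ as a sutured contact submanifold (the case when the ambient filling has boundary is analogous). Remove a Darboux ball from $Y\ssm M$ to view $(M,\Gamma,\xi)$ as a sutured contact submanifold of $(Y(1),\Gamma(1),\xi_Y|_{Y(1)})$. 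The complement
\[ (Y(1)\ssm\inr(M),\,\Gamma\cup\Gamma(1),\,\xi_Y|_{Y(1)\ssm\inr(M)}) \]
is then a sutured contact manifold of exactly the type appearing in the setup of Remark \ref{rmk:embedding}.

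By the relative Giroux correspondence, this complement admits a decomposition into contact $0$-, $1$-, $2$-, and $3$-handles attached successively to a collar of $\partial M$. Let $k$ be the number of $3$-handles; modify the construction by simply not attaching them. Each omitted $3$-handle leaves behind an $S^2$ boundary component equipped with a single suture, which is precisely the convex boundary of the Darboux ball that the $3$-handle would have filled in. Performing only the $0$-, $1$-, and $2$-handle attachments from the decomposition therefore produces the sutured contact manifold $(Y(n),\Gamma(n),\xi_Y|_{Y(n)})$, where $n=1+k\geq 1$, containing $(M,\Gamma,\xi)$ as a sutured contact submanifold.

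By Remark \ref{rmk:embedding}, the corresponding composition of contact handle attachment maps
\[ \Phi: \SHItfun(-M,-\Gamma)\to\SHItfun(-Y(n),-\Gamma(n)) \]
satisfies $\Phi(\theta(M,\Gamma,\xi)) = \theta(Y(n),\xi_Y|_{Y(n)})$, and the right-hand side is nonzero by Theorem \ref{thm:stein}. Hence $\theta(M,\Gamma,\xi)\neq 0$. The point requiring care is the claim that omitting the contact $3$-handles genuinely yields $(Y(n),\Gamma(n),\xi_Y|_{Y(n)})$ as a sutured contact manifold, but this is essentially definitional: a contact $3$-handle is modeled on gluing in a standard Darboux ball along a convex sphere with one dividing curve, so omitting it is the same as removing that ball from the ambient Stein fillable $Y$.
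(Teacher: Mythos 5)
Your argument is correct and is essentially the paper's proof: embed $(M,\Gamma,\xi)$ into the complement of Darboux balls in the Stein fillable manifold, arrange a contact handle decomposition of the complement with $0$-, $1$-, and $2$-handles only, and conclude via Remark \ref{rmk:embedding} and Theorem \ref{thm:stein}. The only difference is that you spell out the phrase ``by choosing these Darboux balls appropriately'' --- namely, by placing the removed balls exactly where the contact $3$-handles of the decomposition would have gone and omitting those handles --- which is a welcome clarification but the same idea.
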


\begin{proof}
Suppose $(M,\Gamma,\xi)$ embeds in the Stein fillable contact manifold $(Y,\xi)$. Then $(M,\Gamma,\xi)$ also embeds into the complement $(Y(n),\xi|_{Y(n)})$ of some $n$ Darboux balls for any $n\geq 1$. By choosing these Darboux balls appropriately, we can arrange that $Y(n)\ssm\inr(M)$ has a contact handle decomposition consisting of $0$-, $1$-, and $2$-handles only. This corollary then follows from Theorem \ref{thm:stein} and the discussion in Remark \ref{rmk:embedding}.
\end{proof}

In order to prove Theorem \ref{thm:stein}, we first establish the following.

\begin{lemma}
\label{lem:s1s2}
For any $k\geq 0$ and any $n\geq 1$, $\SHItfun((\#^k(S^1\times S^2))(n))\cong \C^{2^{k+n-1}}$.
\end{lemma}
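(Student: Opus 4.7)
The strategy is to reduce to two basic moves that each double the rank of sutured instanton homology at the level of a marked odd closure: (i) removing an extra Darboux ball (increasing $n$), and (ii) taking a connect sum with $S^1\times S^2$ (increasing $k$). Each will be realized, after a convenient choice of auxiliary surface, as a connect sum of the underlying closed 3-manifold with $S^1\times S^2$ performed in the complement of $R\cup\eta\cup\alpha$, and each thereby multiplies the relevant instanton Floer homology by a factor of $\C^2$ via a K\"unneth-type formula for $I_*(-|R)_\bullet$ in Kronheimer-Mrowka's setup.

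First I would prove the following ``extra puncture'' formula: for any closed oriented 3-manifold $Y$ and any $n\geq 1$,
\[
\SHItfun(Y(n+1)) \;\cong\; \SHItfun(Y(n)) \otimes_\C \C^2.
\]
To do this, take a marked odd closure $\data=(Y_0,R,r,m,\eta,\alpha)$ of $Y(n)$ built from an auxiliary surface $F$ of genus $g\geq 2$ with $n$ boundary components, and construct a marked odd closure $\data'$ of $Y(n+1)$ using the auxiliary surface $F'$ obtained by puncturing $F$ at one more point (so $F'$ has the same genus and one extra boundary circle). A direct analysis of the preclosure and the boundary identification then shows that the underlying closed 3-manifold of $\data'$ is diffeomorphic to $Y_0\#(S^1\times S^2)$, with $R$, $\eta$, and $\alpha$ all lying in the $Y_0$-summand. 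Invoking the K\"unneth formula under connect sum with $S^1\times S^2$ then yields
\[
I_*(Y_0\#(S^1\times S^2)\mid R)_{\alpha\sqcup\eta} \;\cong\; I_*(Y_0\mid R)_{\alpha\sqcup\eta}\otimes_\C \C^2.
\]

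Next I would prove by induction on $k$ that $\SHItfun((\#^k(S^1\times S^2))(1)) \cong \C^{2^k}$. The base case $k=0$ is that $S^3(1)$ is the product sutured manifold $D^2\times[-1,1]$, so Proposition \ref{prop:productsuturedinstanton} gives $\SHItfun(S^3(1))\cong\C$. For the inductive step, the same kind of closure-plus-K\"unneth argument applies: passing from $\#^k(S^1\times S^2)$ to $\#^{k+1}(S^1\times S^2)$ is itself a connect sum with $S^1\times S^2$ performed in the interior, which at the level of a suitable closure produces a connect sum of the underlying closed 3-manifold with $S^1\times S^2$ in a region disjoint from $R\cup\eta\cup\alpha$, and so again multiplies the instanton homology by $\C^2$.

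Combining the two inductions gives
\[
\SHItfun((\#^k(S^1\times S^2))(n)) \;\cong\; \C^{2^k}\otimes_\C (\C^2)^{n-1} \;=\; \C^{2^{k+n-1}},
\]
as claimed. The main obstacle I anticipate is the topological verification underpinning the ``extra puncture'' step: one has to check carefully that the enlarged auxiliary surface really produces a closure of the form $Y_0\#(S^1\times S^2)$, and that the new $S^1\times S^2$ summand lies disjointly from $R\cup\eta\cup\alpha$ so that the K\"unneth formula can be applied within the eigenspace of $\mu(R)$ with eigenvalue $2g(R)-2$ that defines $\SHIt$.
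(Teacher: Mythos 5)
Your reduction has two parts, and they are not on equal footing. The topological part is essentially fine: if you build a closure of $Y(n+1)$ from an auxiliary surface with one extra boundary circle, the sphere bounding the extra removed ball survives as a nonseparating $2$-sphere in the closure, and cutting along it and capping off recovers a closure of $Y(n)$ with the capped-off auxiliary surface; so the new closure is indeed the old one connect-summed with $S^1\times S^2$, with $r(R\times\{0\})$, $\eta$, $\alpha$ in the old summand (and the same remark applies to the interior connect sum increasing $k$). The gap is the Floer-theoretic step: the ``K\"unneth formula'' $I_*(Y_0\#(S^1\times S^2)|R)_{\alpha\sqcup\eta}\cong I_*(Y_0|R)_{\alpha\sqcup\eta}\otimes\C^2$ is not something you can invoke in this setting. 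Connected sums along spheres are exactly the operation that instanton Floer homology handles badly (reducible connections on the $S^3$ neck; Fukaya-type connected sum theorems are delicate and not stated for the groups $I_*(\cdot|R)_w$ with an admissible bundle concentrated on one summand), and neither \cite{km4} nor \cite{bs3} provides such a formula --- the tools available there are Floer's excision along tori and higher-genus surfaces, which is why all the closure manipulations in this paper are phrased through $2$-handle and splicing cobordisms rather than sphere sums. Worse, the rank-doubling statement you want to cite is essentially the content of the lemma itself (it is equivalent to $\SHItfun(Y(n+1))\cong\SHItfun(Y(n))\otimes\C^2$), so as written the argument is close to circular.

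For comparison, the paper avoids any sphere connect-sum theorem: it peels the manifold apart by contact $1$-handle attachments (which do not change closures, hence preserve $\SHItfun$) into a disjoint union of $k+n-1$ copies of $S^3(2)$, and then computes $\SHItfun(S^3(2))\cong\C^2$ by identifying it with $\KHI$ of the two-component unlink and running the oriented skein exact triangle of \cite{km5} against the nonvanishing theorem for taut balanced sutured manifolds \cite[Theorem 7.12]{km4}. To repair your approach you would need to supply a genuine Floer-theoretic input of this kind for the doubling step --- for instance a surgery exact triangle or excision argument pinning down the rank after the connect sum --- rather than citing a K\"unneth theorem that has not been established for these groups.
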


\begin{proof}
Note that  $(\#^k(S^1\times S^2))(n)$ can be obtained from the disjoint union of $k$ copies of $(S^1\times S^2)(1)$ with one copy of $S^3(n)$ via $k$ contact $1$-handle attachments. Note that each $(S^1\times S^2)(1)$ is obtained from $S^3(2)$ by attaching a single contact $1$-handle, and that $S^3(n)$ is obtained from the disjoint union of $n-1$ copies of $S^3(2)$ by attaching $n-2$ contact $1$-handles. Since contact $1$-handle attachment has no effect on the rank of sutured instanton homology, it follows that $\SHItfun((\#^k(S^1\times S^2))(n))$ is isomorphic to the sutured instanton homology of the disjoint union of $k+n-1$ copies of $S^3(2)$. In particular, \[\SHItfun((\#^k(S^1\times S^2))(n))\cong\bigotimes_{i=1}^{k+n-1} \SHItfun(S^3(2)).\] So, it suffices for the proof of this lemma to show that \begin{equation}\label{eqn:s3}\SHItfun(S^3(2))\cong \C^{2}.\end{equation}

Let $L_k$ denote the $k$-component unlink. Then $S^3(L_k)$ refers to the sutured manifold given as the complement of a regular neighborhood of $L_k$, with $2$ meridional sutures on each boundary component. Note that $S^3(L_k)$ can be obtained from $S^3(k)$ by attaching $k$ contact $1$-handles. Thus, \[\SHItfun(S^3(k))\cong \SHItfun(S^3(L_k)).\] The isomorphism class of the modules which make up the system $\SHItfun(S^3(L_k))$ is what Kronheimer and Mrowka call the instanton knot homology of $L_k$, denoted by $\KHI(L_k)$, so it suffices for \eqref{eqn:s3} to show that \[\KHI(L_2)\cong \C^2.\] In \cite{km5}, Kronheimer and Mrowka show that $\KHI$ satisfies an oriented skein exact triangle. Applying this to a diagram of $L_1$ with a single crossing, as in Figure \ref{fig:skeintri}, we have 
\[\xymatrix@C=-15pt@R=30pt{
\KHI(L_1) \ar[rr]^{f} & & \KHI(L_1) \ar[dl] \\
& \KHI(L_2). \ar[ul] & \\
} \]

\begin{figure}[ht]
\labellist
\small \hair 2pt
\pinlabel $L_1$ at 37 -10
\pinlabel $L_1$ at 132 -10
\pinlabel $L_2$ at 229 -10
\endlabellist
\centering
\includegraphics[width=6cm]{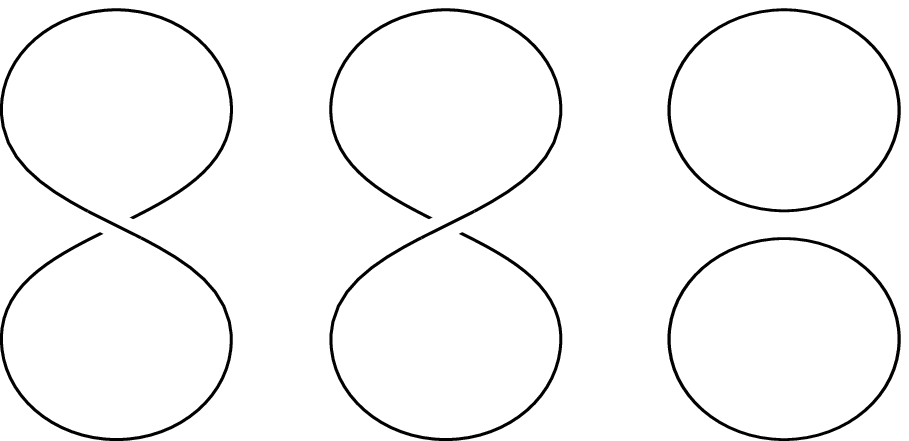}
\caption{The diagrams in an oriented skein triangle.}
\label{fig:skeintri}
\end{figure}

Since $S^3(1)$ is a product sutured manifold, we have that $\SHItfun(S^3(1))\cong \C$, which implies that $\KHI(L_1)\cong \C$. The map $f$ is therefore either zero or an isomorphism. If  the latter, then $\KHI(L_2)\cong 0$, which would imply that $\SHItfun(S^3(2))\cong 0$. But this is impossible since $S^3(2)$ is  taut \cite[Theorem 7.12]{km4}. Thus, $f\equiv 0$, which implies that $\KHI(L_2)\cong \C^2$.
\end{proof}

We may now prove Theorem \ref{thm:stein}.

\begin{proof}[Proof of Theorem \ref{thm:stein}]
Since $(Y,\xi)$ is Stein fillable, it is the result of contact $(-1)$-surgery on some link in the standard tight $(\#^k(S^1\times S^2),\xi_k)$. Let $\#^k(S^1\times S^2)(n)$ be the sutured contact manifold obtained by removing $n$ Darboux balls away from this link and let $Y(n)$ be the corresponding sutured contact manifold obtained via surgery (we are suppressing the contact structures from the notation). Then repeated application of Proposition \ref{prop:contactsurgeryrelativegirouxinstanton} gives rise to a map \[\SHItfun(-Y(n))\to\SHItfun( -(\#^k(S^1\times S^2)(n)))\] which sends $\theta(Y(n))$ to $\theta(\#^k(S^1\times S^2)(n))$. So, it suffices to show that \begin{equation}\label{eqn:xiknotzero}\theta(\#^k(S^1\times S^2)(n))\neq 0.\end{equation} 

The sutured contact manifold  $\#^k(S^1\times S^2)(n)$ has a partial open book decomposition given by $(S,P,id,\mathbf{c},f)$, where $S$ is obtained from the disk  $D^2$ by attaching $k$ unlinked $1$-handles $h_1,\dots,h_k$; $\mathbf{c} = \{c_1,\dots,c_{k+n-1}\}$, where $c_1,\dots,c_{k-1}$ are cocores of the $1$-handles $h_1,\dots,h_{k-1}$ and $c_k,\dots,c_{k+n-1}$ are parallel cocores of the $1$-handle $h_k$; and $P$ is a regular neighborhood of these cocores, as shown in Figure \ref{fig:openbooks1s2}. Define $M_0=H(S)$ and let $M_i$ be the sutured contact manifold obtained by attaching contact $2$-handles to $H(S)$ along the curves $\gamma_1,\dots,\gamma_i\subset \boldsymbol{\gamma}(h,\mathbf{c})$ for $i\geq 1$. In particular, $M_i$ is obtained from $M_{i-1}$ by attaching a contact $2$-handle  along $\gamma_i\subset \partial M_{i-1}$. Let \[\mathscr{H}_{\gamma_i}:\SHItfun(-M_{i-1})\to\SHItfun(-M_i)\] denote the corresponding morphism. Note that $M_{k+n-1} = M(S,P,id,\mathbf{c})$ and the contact invariant $\theta(\#^k(S^1\times S^2)(n))$ is the image of \begin{equation}\label{eqn:nonzeroone}(\mathscr{H}_{\gamma_{k+n-1}}\circ\dots\circ\mathscr{H}_{\gamma_1})(\mathbf{1})\end{equation} under the map $\SHItfun(f)$. So, to prove \eqref{eqn:xiknotzero}, it suffices to show that the class in \eqref{eqn:nonzeroone} is nonzero. For this, it suffices to show that each $\mathscr{H}_{\gamma_i}$ is injective.

\begin{figure}[ht]
\labellist
\small \hair 2pt
\pinlabel $c_1$ at 98 -7
\pinlabel $c_2$ at 0 46
\pinlabel \rotatebox{45}{$c_{k-1}$} at 11 186
\pinlabel \rotatebox{47}{$c_{k}$} at 88 215
\pinlabel \rotatebox{-50}{$c_{r}$} at 147 199
\endlabellist
\centering
\includegraphics[width=4cm]{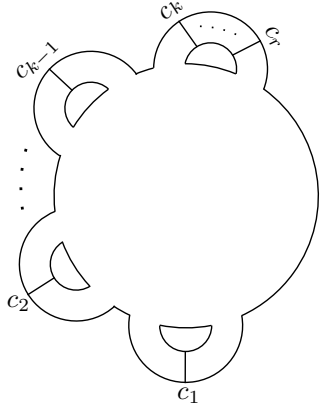}
\caption{The page $S$ obtained from the disk by attaching $k$ unlinked $1$-handles showing the cocores $c_1,\dots,c_{k-1}$ on the first $k-1$ handles and the cocores $c_k,\dots,c_{r}$ on the last handle, where $r=k+n-1$. }
\label{fig:openbooks1s2}
\end{figure}

 Let $\data_{i-1} = (Y_{i-1},R,r,m_{i-1},\eta,\alpha)$ be a marked odd closure of $M_{i-1}$ and let $\data_i = (Y_i,R,r,m_i,\eta,\alpha)$ be the induced  closure of $M_i$, where $Y_i$ is  obtained from $Y_{i-1}$ by performing $(\partial M_{i-1})$-framed surgery on $m(\gamma_i')$, where $\gamma_i'$ is a pushoff of $\gamma_i$ into the interior of $M_{i-1}$. The  $2$-handle cobordism associated to this surgery gives rise to a map
 \[g:\SHIt(-\data_{i-1})\to\SHIt(-\data_i)\] whose equivalence class agrees with the map $\mathscr{H}_{\gamma_i}$. Note that $\gamma_i$ is a unknot in $M_{i-1}$ such that the framing induced by the bounding disk agrees with the $(\partial M_{i-1})$-framing. In other words, $Y_i$ is obtained from $Y_{i-1}$ via $0$-surgery on the unknot $m(\gamma_i')$. Let $\data_{i-1}' = (Y_{i-1}',R,r,m_{i-1}',\eta,\alpha)$ be the  closure of $M_{i-1}$ in which $Y_{i-1}'$ is  obtained from $Y_{i-1}$ by $(-1)$-surgery on this unknot. Then $g$ fits into the surgery exact triangle 
 \begin{equation} \label{eqn:exacttrishi}\xymatrix@C=-25pt@R=30pt{
\SHIt(-\data_{i-1}) \ar[rr]^{g} & & \SHIt(-\data_i) \ar[dl] \\
& \SHIt(-\data_{i-1}'). \ar[ul] & \\
} \end{equation}
For $i=1,\dots,k$, let $(S_i,P_i,id,\mathbf{c}_i = \{c_1,\dots,c_i\})$ be the partial open book in which $S_i$ is the surface obtained from the disk by attaching the first $i$ $1$-handles $h_1,\dots,h_i$. Then $M(S_i,P_i,id,\mathbf{c}_i)$ is diffeomorphic to $(\#^i(S^1\times S^2))(1)$. Note that $M_i$ is obtained from $M(S_i,P_i,id,\mathbf{c}_i)$ by attaching contact $1$-handles. Therefore, \[\SHItfun(-M_i) \cong \SHItfun(-(\#^i(S^1\times S^2))(1))\cong \C^{2^i},\]  where the latter isomorphism is by Lemma \ref{lem:s1s2}. It follows that \[\SHIt(-\data'_{i-1})\cong \C^{2^{i-1}}\,\,\,\,\,{\rm and }\,\,\,\,\, \SHIt(-\data_{i-1})\cong \C^{2^{i-1}}\,\,\,\,\,{\rm and}\,\,\,\,\,\SHIt(-\data_i)\cong \C^{2^i}.\] The exactness of the triangle in \eqref{eqn:exacttrishi} then implies that $g$ is injective for  $i=1,\dots,k$. 

For $i=k+1,\dots, k+n-1$, $M_i$ is diffeomorphic to $(\#^k(S^1\times S^2))(1+i-k)$. Therefore, \[\SHItfun(-M_i) \cong \SHItfun(-(\#^k(S^1\times S^2))(1+i-k))\cong \C^{2^{i}}\] in this case as well,  by Lemma \ref{lem:s1s2}. We therefore have again that \[\SHIt(-\data'_{i-1})\cong \C^{2^{i-1}}\,\,\,\,\,{\rm and }\,\,\,\,\, \SHIt(-\data_{i-1})\cong \C^{2^{i-1}}\,\,\,\,\,{\rm and}\,\,\,\,\,\SHIt(-\data_i)\cong \C^{2^i}.\] The exactness of the triangle in \eqref{eqn:exacttrishi} then implies that $g$ is injective for  $i=k+1,\dots,k+n-1$.

Putting all of this together, we have shown that $\mathscr{H}_{\gamma_i}$ is injective for all $i=1,\dots,k+n-1$, completing the proof.
\end{proof}

\section{Stein fillings and the fundamental group}
\label{sec:stein}

Below, we demonstrate how Conjecture \ref{conj:steinpi1}  follows from Conjecture \ref{conj:steinrank}. Suppose $Y$  is an integer homology 3-sphere which bounds a Stein 4-manifold $(X,J)$ with $c_1(J)\neq 0$. The long exact sequence of the pair $(X,Y)$, combined with Poincar{\'e} duality, tells us that \[H^2(X)\cong H_2(X,Y) \cong H_2(X).\] Moreover, $H_2(X)$ is nontorsion since $X$ can be built out of $1$- and $2$-handles. Thus, $H^2(X)$ is nontorsion. In particular, the difference between two unequal elements in $H^2(X)$ is nontorsion. Let $\bar J$ be the conjugate Stein structure on $X$, so that $c_1(\bar J) = -c_1(J)$. It then follows from the discussion above that $c_1(J)\neq c_1(\bar J)$ and, hence, that   $c_1(J)-c_1(\bar J)$ is nontorsion. 
Assuming that Conjecture \ref{conj:steinrank} is true, it   follows that the rank of $\SHItfun(-Y(1))$ is at least $2$. But \[rk(\SHItfun(-Y(1)))=rk(\SHItfun(-Y(U))),\] where $U$ is an unknot in $Y$. Therefore, \begin{equation}\label{eqn:rk2}rk(\KHIt(Y,U))\geq 2.\end{equation} 

We claim that there exists an irreducible homomorphism \begin{equation}\label{eqn:rhoknot}\rho:\pi_1(Y\ssm U)\to SU(2)\end{equation} which sends a chosen meridian $m$ of $U$ to  $\mathbf{i}\subset SU(2)$. The argument is similar to that used in the proof of \cite[Proposition 7.17]{km4}. Suppose there are no irreducibles. Observe that there is only one reducible homomorphism.  Indeed, reducibles have abelian image and so must  factor through homomorphisms \[H_1(Y\ssm U;\mathbb{Z})\to SU(2)\] sending $[m]$ to $\mathbf{i}$. But  $H_1(Y\ssm U;\mathbb{Z})\cong\Z$ since $Y$ is an integer homology $3$-sphere, so there is exactly one such homomorphism. 
Since there are no irreducibles, this  reducible  homomorphism corresponds to the unique generator of a chain complex for the reduced singular instanton knot homology $I^{\natural}(Y,U)$, which implies that \[I^{\natural}(Y,U)\cong \Z.\] There are several ways to see this; it follows easily, for instance, from the work of Hedden, Herald, and Kirk \cite{hhk}. On the other hand, Kronheimer and Mrowka proved in \cite{km3} that \[\KHIt(Y,U)\cong I^{\natural}(Y,U)\otimes \C,\] so the inequality in (\ref{eqn:rk2}) implies that $I^{\natural}(Y,U)$ has rank at least 2, a contradiction. It follows that there exists an irreducible homomorphism \[\rho: \pi_1(Y\ssm U)\cong \pi_1(Y)*\Z\to SU(2)\] as claimed. Such a $\rho$ then induces a  homomorphism \[\rho_Y:\pi_1(Y)\to SU(2)\] which must be nontrivial (otherwise, $\rho$ would be reducible), completing this discussion. 

\bibliographystyle{hplain}
\bibliography{References}

\end{document}